\theoremstyle{plain}
\newtheorem{thm}{Theorem}[section]
\newtheorem{prop}[thm]{Proposition}
\newtheorem{lem}[thm]{Lemma}
\newtheorem{cor}[thm]{Corollary}
\newtheorem{cla}[thm]{Claim}
\theoremstyle{remark}
\newtheorem{example}{Example}[section]
\newtheorem{rem}{Remark}[section]
\DeclareMathOperator{\dist}{dist}
\DeclareMathOperator{\sgn}{sgn}
\newcommand*{\dif}[1]{\operatorname{d}\!{#1}}
\renewcommand{\emptyset}{\varnothing}
\DeclarePairedDelimiter{\abs}{\lvert}{\rvert}
\DeclarePairedDelimiter{\norm}{\lVert}{\rVert}
\DeclarePairedDelimiter{\paren}{\lparen}{\rparen}
\DeclarePairedDelimiter{\sbra}{\lbrack}{\rbrack}
\DeclarePairedDelimiter{\cbra}{\lbrace}{\rbrace}
\DeclarePairedDelimiter{\floor}{\lfloor}{\rfloor}
\begin{document}

\title{Approximation on slabs and uniqueness for Bernoulli percolation with a sublattice
  of defects} \author{Bernardo N. B. de Lima\thanks{DMAT, Universidade Federal de Minas
    Gerais, \nolinkurl{bnblima@mat.ufmg.br} / \nolinkurl{humberto.sanna@gmail.com}} \and
  Sébastien Martineau\thanks{LPSM, Sorbonne Université,
    \nolinkurl{sebastien.martineau@sorbonne-universite.fr}} \and Humberto C.
  Sanna\footnotemark[1]~~\footnotemark[3] \and Daniel Valesin\thanks{FSE, University of
    Groningen, \nolinkurl{d.rodrigues.valesin@rug.nl}}}
\maketitle{}

\begin{abstract}
  Let $ \mathbb{L}^{d} = ( \mathbb{Z}^{d},\mathbb{E}^{d} ) $ be the $ d $-dimensional
  hypercubic lattice. We consider a model of inhomogeneous Bernoulli percolation on
  $ \mathbb{L}^{d} $ in which every edge inside the $ s $-dimensional sublattice
  $ \mathbb{Z}^{s} \times \{ 0 \}^{d-s} $, $ 2 \leq s < d $, is open with probability
  $ q $ and every other edge is open with probability $ p $. We prove the uniqueness of
  the infinite cluster in the supercritical regime whenever $ p \neq p_c(d) $ and
  $ 2 \le s < d-1 $, full uniqueness when $ s = d-1 $ and that the critical point
  $ (p,q_{c}(p)) $ can be approximated on the phase space by the critical points of slabs,
  for any $ p < p_{c}(d) $, where $ p_{c}(d) $ denotes the threshold for homogeneous
  percolation.
\end{abstract}

{\footnotesize Keywords: Inhomogeneous percolation ; Uniqueness ; Critical curve ; Grimmett--Marstrand Theorem\\
MSC numbers:  60K35, 82B43}

\setstretch{1.25}
\section{Introduction}\label{sec:intro}
A percolation process on a graph $ G = (V,E) $ is briefly defined as a probability
measure on the set of the subgraphs of $ G $. Among the many possible variants, this
paper deals with bond percolation models, in which every edge of $ E $ can be
\emph{\textbf{retained (open)}} or \emph{\textbf{removed (closed)}}, states
represented by 1 and 0, respectively. A typical percolation configuration is an
element of $ \Omega = {\{ 0,1 \}}^{E} $; this set can be regarded as the set of
subgraphs of $ G $ induced by their open edges. That is, an element
$ \omega \in \Omega $ is associated with the subgraph
$ ( \mathsf{V} (\omega),\mathsf{E} (\omega) ) $, where
$ \mathsf{E} (\omega) = {\{ e \in E \vcentcolon \omega (e) = 1 \}} $ and
$ \mathsf{V} (\omega) = {\{ x \in V \vcentcolon \exists e \in \mathsf{E} (\omega)
  \text{ such that } x \in e \}} $, and conversely, a subgraph $ (V',F) \subset G $
with no isolated vertices induces the configuration $ \omega \in \Omega $, given by
$ \omega (e) = 1 $ if $ e \in F $ and $ \omega (e) = 0 $ otherwise. As usual, the
underlying $ \sigma $-algebra $ \mathcal{F} $ of the process is the one generated by
the finite-dimensional cylinder sets of $ \Omega $.

More specifically, we tackle the model studied by Iliev, Janse van Rensburg and
Madras~\cite{ijm15}, which consists of Bernoulli percolation on the $ d $-dimensional
lattice, $ d \geq 3 $, with an $ s $-dimensional sublattice of inhomogeneities,
$ 2 \leq s < d $. Formally speaking, let
$ \mathbb{L}^{d} = ( \mathbb{Z}^{d},\mathbb{E}^{d} ) $, where
$ \mathbb{E}^{d} = \cbra*{ \{ x,y \} \subset \mathbb{Z}^{d} \vcentcolon \norm*{x-y}_{1} =
  1} $ and $ \norm*{\cdot}_{1} $ is the $ L_{1} $-norm. Also, define
$ H \coloneqq \mathbb{Z}^{s} \times {\{ 0 \}}^{d-s} $ and
$ \mathsf{E}_{H} \coloneqq \cbra*{e \in \mathbb{E}^{d} \vcentcolon e \subset H} $. For
$ p,q \in [ 0,1 ] $, the governing probability measure $ P_{p,q} $ of the process is the
product measure on $ ( \Omega, \mathcal{F} ) $ with densities $ q $ and $ p $ on
$ \mathsf{E}_{H} $ and $ \mathbb{E}^{d} \setminus \mathsf{E}_{H} $, respectively. That is,
each edge of $ \mathsf{E}_{H} $ is open with probability $ q $ and each edge of
$ \mathbb{E}^{d} \setminus \mathsf{E}_{H} $ is open with probability $ p $, independently
of any other edge. In \cite{ijm15}, the authors generalized several classical results of
homogeneous bond percolation to this inhomogeneous setting. Besides, they presented the
phase-diagram for percolation and showed that the critical curve $ q_{c} (p) $ is strictly
decreasing for $ p \in [ 0,p_{c}(d) ] $, where $ p_{c}(d) $ is the threshold for
homogeneous Bernoulli bond percolation on $ \mathbb{L}^{d} $. This is particularly
interesting since it guarantees the existence of a set of parameters $ (p,q) $ such that
$ p < p_{c}(d) < q < p_{c}(s) $ and there is an infinite cluster $ P_{p,q} $-almost
surely.

This model was also treated by Newman and Wu~\cite{nw97}, where the authors showed that,
for large $d$, the critical point $q_{c}(p_{c}(d))$ is strictly between $p_{c}(s)$ and
$p_{c}(d)$, when $2 \leq s \leq d-3$. They have also proved that $q_{c}(p_{c}(d))=1$ if
$s=1$. The papers of Madras, Schinazi and Schonmann~\cite{mss94} and
Zhang~\cite{zhang1994} deal with low-dimensional inhomogeneities as well. We also refer
the reader to the book of Kesten~\cite{kesten1982}, which presents one of the earliest
results on the study of the critical curve for inhomogeneous percolation: considering the
square lattice $\mathbb{L}^{2}$ and assigning parameters $p$ and $q$ to the horizontal and
vertical edges, respectively, the author proves that $q_{c}(p)=1-p$.

The present work addresses two fundamental problems in percolation theory which have not
yet been considered for the model described above. The first one is to determine the
number of infinite clusters in a percolation configuration. For invariant percolation on
the $ d $-dimensional lattice, major contributions to this topic are those of Aizenman,
Kesten and Newman~\cite{akn87} and Burton and Keane~\cite{bk89}. An extension of the
latter's argument to more general graphs can be found in the book of Lyons and
Peres~\cite{lp2016}, where the authors make use of minimal spanning forests to establish
the uniqueness of the infinite cluster under certain conditions. As we shall discuss
further, the lack of invariance of the percolation measure $ P_{p,q} $ under a transitive
group of automorphisms of $ \mathbb{L}^{d} $ plays against a direct application of the
existing techniques. We will then explore some other properties of our model, so that we
can overcome this issue and conveniently adapt the known arguments to prove uniqueness of
the infinite cluster in the case where $ p \neq p_c(d) $ and $ 1 \le s < d-1 $, and full
uniqueness when $ s = d-1 $.

The second problem we address is whether for any $ p \in \lbrack 0,p_{c}(d) \rparen $, the
critical point $ (p,q_{c} (p)) \in [0,1]^{2} $ can be approximated by the critical
point of the restriction of the inhomogeneous process to a slab
$ \mathbb{Z}^{2} \times {\{ -N,\ldots,N \}}^{d-2} $, for large $ N \in \mathbb{N} $.
Here, the classical work of Grimmett and Marstrand~\cite{gm90} serves as the standard
reference for providing the building blocks that give an affirmative answer to this
question. We undergo the construction of a suitable renormalization process, which
possesses some particularities that arise with the introduction of inhomogeneities,
in contrast with the usual approach of~\cite{gm90}. As we shall see, in the
supercritical regime of parameters $ (p,q) $ where $ p < p_{c} (d) $, the exponential
decay of the one arm event in
$ ( \mathbb{Z}^{d}, \mathbb{E}^{d} \setminus \mathsf{E}_{H} )
$~\cite{ab87,dct16,men86} compels us to search for vertices connected to the origin
lying near the sublattice $ H $. Therefore, the finite-size criterion used in the
construction of long-range connections must be modified accordingly.

In the following, we introduce the relevant notation and concepts that are necessary
for the statement of the main results of this paper. Given a graph $ G=(V,E) $ and a
configuration $ \omega \in \Omega $, an \emph{\textbf{open path}} in $ G $ is a set
of distinct vertices $ v_{0},v_{1},\ldots,v_{m} \in V $, such that
$ \{ v_{i},v_{i+1} \} \in E $ and $ \omega ( \{ v_{i},v_{i+1} \} ) = 1 $ for every
$ i=0,\ldots,m-1 $. For $ u,v \in V $, we say that $ u $ is \emph{\textbf{connected}}
to $ v $ in $ \omega $ if either $ u=v $ or there is an open path from $ u $ to
$ v $, this event being denoted by $ \{ u \leftrightarrow v \} $. The
\emph{\textbf{cluster}} $ \mathcal{C} (u) $ of $ u $ in $ \omega $ is the random set
of vertices of $ V $ that are connected to $ u $, that is,
\begin{align*}
  \mathcal{C} (u)
  & \coloneqq \{ v \in V \vcentcolon u \leftrightarrow v \}.
\end{align*}
If $ \abs{\mathcal{C} (u)} = \infty $, we say that the vertex $ u $
\emph{\textbf{percolates}} and write $ \{ u \leftrightarrow \infty \} $ for the set
of such configurations.

Since we are interested in investigating how many infinite clusters do exist, if any,
in a configuration $ \omega \in \Omega $, we define the \emph{\textbf{number of
    infinite components}} of $ \omega $ as the random variable
$ N_{\infty} \vcentcolon \Omega \to \mathbb{N}\cup\{+\infty\} $, given by
\begin{align*}
  N_{\infty}
  & \coloneqq \abs{\{ \mathcal{C} (u) \vcentcolon u \in V,
    \abs{\mathcal{C} (u)} = \infty \}}.
\end{align*}

We are now in a position to state the first theorem of this paper. As mentioned
above, let
$ p_{c}(d) \coloneqq \sup \cbra*{p \vcentcolon P_{p,p} (o \leftrightarrow \infty
  \text{ in } \mathbb{L}^{d}) = 0} $.
\begin{thm}[Uniqueness of the infinite cluster]\label{thm:pq-uniqueness}
  Assume that $d\ge s \ge 1$. Then, for every $ p,q \in [0,1] $ such that $ p \neq p_c(d) $, there is at most one infinite cluster
  almost surely. The conclusion also holds for $ p = p_c(d) $ and $ s = d-1 \ge 1$.
\end{thm}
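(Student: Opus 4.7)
The plan is to adapt the Burton--Keane uniqueness argument to the reduced symmetry group of $P_{p,q}$, splitting into cases according to the position of $p$ relative to $p_{c}(d)$. I first observe that $P_{p,q}$ is invariant and ergodic under the subgroup $T \coloneqq \mathbb{Z}^{s} \times \{0\}^{d-s}$ of translations preserving $H$ (ergodicity follows from the product structure of $P_{p,q}$ and the infinitude of $T$). Hence $N_{\infty}$ is almost surely equal to some deterministic constant $k \in \{0,1,2,\ldots,\infty\}$. The degenerate cases $p \in \{0,1\}$ or $q \in \{0,1\}$ reduce to homogeneous percolation or to deterministic configurations, for which uniqueness is classical; so I henceforth assume $p, q \in (0,1)$, which grants finite energy.

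The values $2 \le k < \infty$ are excluded by the Newman--Schulman argument: on the event that $k \ge 2$ infinite clusters all intersect a large cube $B$, opening every edge inside $B$ (a positive-probability event by finite energy) merges them into a single infinite cluster, a contradiction.

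The core difficulty is $k = \infty$, which one would like to rule out by Burton--Keane. A trifurcation count on $B_{n} \coloneqq [-n,n]^{d}$ gives at most $O(n^{d-1})$ trifurcations via the surface bound, while $T$-invariance alone yields a lower bound of order $n^{s}$ (summing a positive trifurcation probability over a $T$-orbit contained in $H$). These exponents match when $s = d-1$, and in that case a careful accounting of which boundary points of $B_{n}$ can be reached by the three vertex-disjoint infinite paths emanating from a trifurcation---together with the $T$-invariance along $H$---produces a genuine contradiction, handling $s = d-1$ for all $p$ (including $p = p_{c}(d)$).

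For $s < d - 1$, the assumption $p \neq p_{c}(d)$ is used to amplify the trifurcation count from $n^{s}$ to $n^{d}$. When $p > p_{c}(d)$, homogeneous $p$-percolation on $\mathbb{L}^{d}$ has an almost surely unique infinite cluster with positive density at every vertex; under a monotone coupling, this cluster lies inside some infinite cluster of $P_{p,q}$, and finite energy allows me to create a trifurcation at any prescribed vertex of $\mathbb{Z}^{d}$ with uniformly positive probability, giving $\Theta(n^{d})$ trifurcations in $B_{n}$ and the desired contradiction. When $p < p_{c}(d)$, exponential decay of the truncated two-point function of the homogeneous $p$-process (Menshikov; Aizenman--Barsky; Duminil-Copin--Tassion) forces every infinite $P_{p,q}$-cluster to meet $H$; trifurcations can again be placed inside $H$, while the surface bound is refined by observing that the three infinite paths exiting such a trifurcation must return to $H$, effectively reducing the relevant boundary to order $n^{s-1}$. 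I expect the most delicate step to be this last renormalization, which I would carry out via a Grimmett--Marstrand-style finite-size criterion adapted to the $q$-edges inside $H$, in the same spirit as the slab construction underlying the paper's second main theorem.
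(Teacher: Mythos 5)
Your overall plan has the right shape: reduce to $T$-invariance and ergodicity along $H$, kill $2\le k<\infty$ via Newman--Schulman, and attack $k=\infty$ by a Burton--Keane-type count split according to the sign of $p-p_c(d)$. This matches the paper's decomposition into the cases $p<p_c(d)$, $p>p_c(d)$, $p=p_c(d)$. However, the individual cases contain concrete gaps and one misidentified tool.

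For $p>p_c(d)$, your trifurcation count does not close. You claim that finite energy lets you ``create a trifurcation at any prescribed vertex of $\mathbb{Z}^{d}$ with uniformly positive probability,'' but $P_{p,q}$ is invariant only under translations along $H$, so the trifurcation probability is a function of the distance to $H$ and you have no uniform lower bound a priori. If $N_\infty=\infty$, it is not clear that three \emph{distinct} infinite clusters pass near a vertex far from $H$ with probability bounded below uniformly in the height: possibly only one (the homogeneous cluster) is reliably present there, and finite energy cannot manufacture trifurcations from a single cluster. The paper sidesteps this by not counting trifurcations at all in this regime: it runs a mass-transport argument in the spirit of H\"aggstr\"om--Peres on $H$, using the coupling $\omega_{p,0,0}\subset\omega_{p,q,p}$ and the half-space uniqueness result of Barsky--Grimmett--Newman to show that every infinite cluster of $\omega_{p,q,p}$ must contain an infinite cluster of $\omega_{p,0,0}$, whence uniqueness follows. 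You would need either to supply the missing uniform bound (which seems genuinely hard) or switch to an argument that respects the restricted symmetry, as mass transport does.

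For $s=d-1$ with $p=p_c(d)$, ``a careful accounting of which boundary points can be reached'' is not a proof. Matching exponents $n^{s}$ against $n^{d-1}$ yields an inequality of the form $t\,n^{d-1}\le C\,n^{d-1}$, which is not a contradiction; you need to beat the constant, and for that you need a new input. The paper's actual mechanism is (i) a box that is only $\lfloor\log n\rfloor$-thick transversally, so that the side facets have $o(n^{s})$ vertices and the mass from the trifurcation count is forced onto the top/bottom facets, and (ii) the Barsky--Grimmett--Newman theorem that there is no infinite cluster in the half-space at $p=p_c(d)$, which makes the density of vertices of $B_n\cap H$ connected to distance $\lfloor\log n\rfloor$ in a half-space tend to zero. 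Without BGN (or an equivalent input), the argument for this case does not go through. Note also that this approach delivers the case $p<p_c(d)$, $s=d-1$ as a byproduct, but it does \emph{not} extend to $s<d-1$ even at $p<p_c(d)$, because the side facets then have $n^{s}(\log n)^{d-s-1}$ vertices, which is not $o(n^{s})$.

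For $p<p_c(d)$ with $s<d-1$, your reduction to $H$ via exponential decay of the subcritical one-arm event is essentially the paper's argument, and the heuristic ``the relevant boundary is of order $n^{s-1}$'' is on the right track (the paper shows $E_{p,q}|\mathcal{C}(\Delta_v B_n;H)| = O(n^{s-1}(\log n)^{d-s}) + O(n^{d-1-c\alpha})$, which is $o(n^{s})$). However, you then announce a ``Grimmett--Marstrand-style finite-size criterion''; that is the wrong tool here. Grimmett--Marstrand is used only for the slab approximation theorem. The $p<p_c(d)$ uniqueness argument is a direct expectation computation combined with a Burton--Keane-type lower bound expressed through minimal spanning forests (the paper's Lemma~\ref{lem:infty-many-cl-property}). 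No renormalization is needed; replacing it with one is both unnecessary and, as written, underspecified.

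In short: the case decomposition is right, the reduction to $H$ and the use of subcritical exponential decay are right, but (a) the $p>p_c(d)$ trifurcation count lacks the uniform lower bound and should instead use mass transport, (b) the $s=d-1$, $p=p_c(d)$ case needs BGN's half-space criticality and a $\log$-thin box to make matching exponents into an actual contradiction, and (c) Grimmett--Marstrand has no role in the uniqueness proof.
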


Before we move on to state the second result, let us briefly discuss the issues that
appear in our model and are not covered by the existing literature regarding the
determination of the number of infinite components. On amenable graphs such as
$ \mathbb{L}^{d} $, there is an important property used in \cite{bk89} and \cite{lp2016}
which plays a key role to determine the uniqueness of the infinite component in the
supercritical phase, namely the invariance of the percolation measure under a transitive
group of automorphisms of the graph. Under this condition, assuming
$ N_{\infty} = \infty $, one can find a positive lower bound for the probabilities of any
vertex $ x \in \mathbb{Z}^{d} $ to be a branching point. This fact together with the
observation that the number of branching points lying inside any box of $ \mathbb{Z}^{d} $
cannot exceed the size of its boundary implies the non-amenability of the graph, a
contradiction. However, in our model, the group of automorphisms for which $ P_{p,q} $ is
invariant does not act transitively on $\mathbb{Z}^{d}$, hence the above argument cannot
be applied. As a matter of fact, if $ P_{p,q} (N_\infty = \infty) > 0 $ and
$ p < p_{c}(d) $, the probability that a vertex $ x $ is a branching point decays
exponentially fast with the distance between $ x $ and $ H $, which leads us to the
conclusion that the expected number of branching points in a box of length $ n $ is of order
$ n^{s} $, yielding no contradiction. On the other hand, when $ p > p_{c}(d) $, we must
ensure that setting the parameter $ q $ to any value other than $ p $ does not cause the
appearance of any new infinite cluster around the sublattice $ H $. We shall circumvent
these difficulties by exploring additional properties of the percolation measure
$ P_{p,q} $.

For the statement of the second result, we introduce the \emph{\textbf{critical
    parameter function}}, $ q_{c} \vcentcolon [0,1] \to [0,1] $, defined by
\begin{align*}
  q_{c} (p)
  & \coloneqq \sup \cbra*{q \vcentcolon P_{p,q} (o \leftrightarrow \infty) = 0}.
\end{align*}
We also denote by $ q_{c}^{N} $ the analogous function for the restriction of the
Bernoulli percolation process on $ \mathbb{Z}^{d} $ with sublattice of defects
$ H = \mathbb{Z}^{s} \times \{ 0 \}^{d-s} $ to the slab
$ \mathbb{Z}^{2} \times \{ -N,\ldots,N \}^{d-2} $.
\begin{thm}[Approximation on slabs]\label{thm:pq-approx-slabs}
  Assume that $d\ge s\ge 2$. Let $ p < p_{c}(d) $. Then, for every $ \eta > 0 $, there exists an $ N \in \mathbb{N} $
  such that
  \begin{align*}
    q_{c}^{N} ( p + \eta )
    & < q_{c} (p) + \eta.
  \end{align*}
\end{thm}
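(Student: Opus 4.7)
The plan is to adapt the Grimmett--Marstrand renormalization \cite{gm90} to the
layered setting, using the strict monotonicity of $q_c$ on $[0,p_c(d)]$ proved
in \cite{ijm15} to set up a two-step sprinkling. Fix $p < p_c(d)$ and
$\eta > 0$, split it as $\eta = \eta_1 + \eta_2$, and set
$(p', q') \coloneqq (p + \eta_1, q_c(p) + \eta_1)$ and
$(p'', q'') \coloneqq (p + \eta, q_c(p) + \eta)$. Since $q_c$ is strictly
decreasing, $q_c(p') \leq q_c(p) < q'$, so $(p', q')$ is strictly supercritical
and $P_{p',q'}(o \leftrightarrow \infty) > 0$. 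The goal reduces to showing
that at $(p'', q'')$, the restriction of the process to the slab
$\mathbb{Z}^2 \times \{-N, \ldots, N\}^{d-2}$ percolates for some $N$
depending on $\eta$.

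The renormalization is performed on $\mathbb{Z}^2$, which carries the
genuinely infinite directions of the slab. Choose scales $K$ and $N$ and, for
each $x \in \mathbb{Z}^2$, define
$R_x \coloneqq (Kx + [-K, K]^2) \times \{-N, \ldots, N\}^{d-2}$. Declare $x$
\emph{good} if $R_x$ contains an open cluster joining a seed located near the
centre of $R_x$ on $H$ to the analogous seeds of the four neighbouring regions
$R_{x \pm e_1}, R_{x \pm e_2}$. The process of good sites is finite-range
dependent, so by the Liggett--Schonmann--Stacey stochastic domination theorem
a sufficiently high density of good sites dominates supercritical Bernoulli
site percolation on $\mathbb{Z}^2$ and therefore produces an infinite cluster
of good sites. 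Concatenating the open paths certifying good neighbours then
yields an infinite open cluster inside the slab.

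The heart of the proof is thus a finite-size criterion: at parameters
$(p', q')$, the block event at the origin should occur inside a large finite
window with probability close to $P_{p',q'}(o \leftrightarrow \infty)$. Here
the specificity of the model matters. Because $p' < p_c(d)$, the exponential
decay of the one-arm event in $(\mathbb{Z}^d, \mathbb{E}^d \setminus \mathsf{E}_H)$
\cite{ab87,dct16,men86} forces the infinite cluster to live in a bounded
neighbourhood of $H$: far from $H$, the cluster retreats. By choosing $N$
larger than a suitable constant times the corresponding correlation length,
the relevant part of the infinite cluster through $o$ lies inside the slab
with high probability, and the required connections from the central seed to
the four neighbouring seeds are realized by paths hugging $H$. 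The sprinkling
step then consumes $\eta_2$ by opening extra edges independently to close
small local gaps, and an FKG plus iteration argument pushes the block
probability arbitrarily close to $1$. The main obstacle is the anisotropic
character of this finite-size criterion: in the homogeneous Grimmett--Marstrand
setup, the supercritical cluster fills $\mathbb{Z}^d$ uniformly, whereas here
it is quasi-$s$-dimensional, propagating along $H$ while decaying exponentially
transversally. One must therefore establish a directional approximation
showing that connections from $o$ to distant regions of $H$ realized inside a
tube of thickness $N$ approximate $P_{p',q'}(o \leftrightarrow \infty)$ as
$K, N \to \infty$, and that the four sub-events of exit to each neighbouring
seed can be combined with uniform estimates. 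This anisotropic approximation is
the genuinely new ingredient; once it is in place, the rest follows the
classical scheme and delivers $q_c^N(p + \eta) < q_c(p) + \eta$.
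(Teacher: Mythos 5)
Your proposal correctly identifies the high-level strategy (Grimmett--Marstrand renormalization with sprinkling, and the observation that exponential decay of connectivities away from $H$ forces the construction to look for connections near the sublattice), but it leaves the central new content unproved. You write that one ``must establish a directional approximation showing that connections from $o$ to distant regions of $H$ realized inside a tube of thickness $N$ approximate $P_{p',q'}(o\leftrightarrow\infty)$'' and that ``this anisotropic approximation is the genuinely new ingredient; once it is in place, the rest follows the classical scheme.'' That ingredient is precisely what the paper proves in Section~\ref{sec:technical-lemmas}: Lemma~\ref{lem:gm1} redefines the target set $U_n^{\alpha,\beta}$ as the boundary vertices of an annulus \emph{in $H$} (rather than a sphere in $\mathbb{Z}^d$ as in \cite{gm90}) and shows via the one-arm exponential decay and a closing-edges argument that $\abs{U_n^{\alpha,\beta}}$ is large whenever $B_m^H\leftrightarrow\infty$; Lemmas~\ref{lem:gm2}--\ref{lem:gm4} then bootstrap this into a finite-size/sprinkling criterion. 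You name the obstacle but do not overcome it, so the proposal is a scaffolding, not a proof.

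There is also a structural divergence worth flagging. The paper does not use a finite-range-dependent ``good block'' process combined with a Liggett--Schonmann--Stacey domination, as you propose. Instead it runs the Grimmett--Marstrand \emph{sequential} cluster-growth algorithm on an \emph{oriented} renormalized lattice (Lemma~\ref{lem:gm5}), updating ``negative'' and ``positive'' information functions $\gamma_k,\zeta_k$ step by step, and controlling the \emph{conditional} probability of success at each step via Lemma~\ref{lem:gm4}. The sprinkling budget $\eta/2$ is consumed step by step (each edge is inspected at most 8 times, contributing at most $8\delta=\eta/2$), which is different from a one-shot sprinkling that would enlarge an a priori fixed block event. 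The orientation and the explicit multi-phase steering (Phases 1--9, with boxes of several $(\alpha,\beta)$-sizes as in Remark~\ref{rem:multiple-box-sizes}) are there precisely to ensure that each new application of Lemma~\ref{lem:gm4} looks only at fresh, unexplored regions. Your LSS-based block construction would require you to define a seed-to-seeds block event whose probability can actually be driven close to $1$, and to explain how the sprinkling is wired into that block event with only a bounded parameter increment; as stated, the proposal glosses over these points, and they are nontrivial exactly because the seeds found in the exploration are random and must be matched consistently between adjacent blocks.
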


As we mentioned earlier, in~\cite{ijm15} the authors showed that $ q_{c} $ is strictly
decreasing in the interval $ [0,p_{c}(d)] $. To complement the behavior of the critical
curve, Theorem~\ref{thm:pq-approx-slabs} arose as an effort to prove the (left)-continuity
of $ q_{c} $ in the interval $ \lbrack 0,p_{c}(d) \rparen $. Although the idea of
essential enhancements cannot be directly applied to determine the continuity of
$ q_{c} $, the work of Aizenman and Grimmett~\cite{ag91} implies that
$ q_{c}^{N} $ is continuous and strictly decreasing in the interval
$ \lbrack 0,p_{c}(d) \rparen $, for every $ N \in \mathbb{N} $. Therefore, the
left-continuity of $ q_{c} $ would follow if we could replace $ q_{c}^{N} (p + \eta) $
by $ q_{c}^{N} (p) $ in the statement of Theorem~\ref{thm:pq-approx-slabs}. Since we were
not able to make this change, continuity of $ q_{c} $ remains an open problem.
Nevertheless, we have achieved some minor improvements, such as the openness of the set
$ \{ (p,q) \in [0,1] \vcentcolon E_{p,q}\abs{\mathcal{C}} < \infty \} $ and the continuity of
$ q_c $ at $ p=0 $.

In what follows, we devote Section~\ref{sec:subcritical-perturbations} to prove these
improvements. In Section~\ref{sec:uniqueness}, we deal with the uniqueness of the infinite
cluster for our model. We also consider some uniqueness results on graphs of uniformly
bounded degree (see Section~\ref{sec:digression}). In Section~\ref{sec:approx-slabs}, we
study how the inhomogeneous percolation process on $\mathbb{Z}^{d}$ can be approximated by
an analogous process on a slab $\mathbb{Z}^{2}\times\{-N,\ldots,N\}^{d-2}$, for large
$N\in\mathbb{N}$.

\section{Perturbative study of the subcritical regime}
\label{sec:subcritical-perturbations}

In this section we study what happens to the percolation behavior when we take parameters
$ p,q \in [0,1] $ such that $ E_{p,q} \abs{C} < \infty $ and increase both of them by some
$ \varepsilon > 0 $. We conclude that if $ \varepsilon $ is small enough, then we still
have $ E_{p+\varepsilon,q+\varepsilon} \abs{C} < \infty $.

\begin{prop}\label{prop:openess-subcritical-set}
  The set
  $ \mathcal{A} = \cbra[\big]{(p,q)\in[0,1]^{2} \vcentcolon E_{p,q} \abs{C} < \infty}$ is
  open.
\end{prop}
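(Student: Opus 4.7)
The plan is to show that if $(p_0, q_0) \in \mathcal{A}$, then $(p_0 + \varepsilon, q_0 + \varepsilon) \in \mathcal{A}$ for all sufficiently small $\varepsilon > 0$; combined with the monotonicity of $E_{p, q}|\mathcal{C}(o)|$ in each of its two arguments, this implies openness of $\mathcal{A}$. Write $\chi(x; p, q) := E_{p, q}|\mathcal{C}(x)|$, $\chi(p, q) := \chi(o; p, q)$, and $\chi^*(p, q) := \sup_{x \in \mathbb{Z}^d} \chi(x; p, q)$.

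The natural tool is a differential inequality for $\chi$ along the diagonal path $(p_t, q_t) := (p_0 + t, q_0 + t)$. By Russo's formula, $\tfrac{d}{dt}\chi(p_t, q_t)$ equals the sum over edges $e$ and targets $y$ of $P_{p_t, q_t}(e \text{ pivotal for } \{o \leftrightarrow y\})$; the BK inequality applied with $e = \{u, v\}$ bounds each such pivotal probability by $P(o \leftrightarrow u)P(v \leftrightarrow y) + P(o \leftrightarrow v)P(u \leftrightarrow y)$, and summing over $y$ and $e$ yields
\[
  \tfrac{d}{dt}\chi(p_t, q_t) \;\leq\; 4d\,\chi(p_t, q_t)\,\chi^*(p_t, q_t).
\]

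The main step is then to control $\chi^*$ on a neighborhood of $(p_0, q_0)$. I would first observe that $(p, q) \in \mathcal{A}$ forces $p < p_c(d)$, since otherwise Bernoulli($p$) percolation on the subgraph $(\mathbb{Z}^d, \mathbb{E}^d \setminus \mathsf{E}_H)$ - which has the same critical value $p_c(d)$ as $\mathbb{L}^d$ because $\mathsf{E}_H$ forms a lower-dimensional set (which follows from Grimmett--Marstrand by placing a supercritical slab away from $H$) - already percolates, forcing $\chi = \infty$. Thus on a small neighborhood of $(p_0, q_0)$ the homogeneous susceptibility $\chi_{\mathrm{hom}}(p)$ is bounded by some $M < \infty$. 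For an arbitrary $x$, splitting $|\mathcal{C}(x)| = |\mathcal{C}(x)|\mathbf{1}_{\{x \not\leftrightarrow H\}} + |\mathcal{C}(x)|\mathbf{1}_{\{x \leftrightarrow H\}}$ then yields a bound $\chi^*(p, q) \leq M + C\chi(p, q)$: on $\{x \not\leftrightarrow H\}$ the cluster is contained in the subprocess just mentioned and contributes at most $\chi_{\mathrm{hom}}(p) \leq M$; on $\{x \leftrightarrow H\}$ one has $\mathcal{C}(x) = \mathcal{C}(y)$ for some $y \in H$, and a first-hit exploration followed by a spatial Markov and translation-invariance argument along $H$ bounds its expectation by a constant multiple of $\chi(p, q)$.

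Substituting back produces $\chi'(t) \leq 4dM\chi(t) + 4dC\chi(t)^2$, whose solutions starting from the finite value $\chi(0) < \infty$ remain finite on some interval $[0, \varepsilon_0)$, as required. The main obstacle is the on-$\{x \leftrightarrow H\}$ bound: a naive union bound over the first-hit vertices $y \in H$ produces the divergent factor $\sum_{y \in H} \chi(y; p, q)$, so one must use a spatial Markov / first-hit BK argument to integrate out the exploration from $x$ to $H$ while preserving a linear dependence on $\chi(p, q)$.
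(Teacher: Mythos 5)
Your approach is genuinely different from the paper's. You propose a Russo--BK (Aizenman--Newman tree-graph) differential inequality $\chi'(t)\le 4d\,\chi(t)\,\chi^*(t)$ along the diagonal and then bound $\chi^*$, whereas the paper establishes a \emph{uniform} (in $v$) exponential decay of $P_{p,q}(v\leftrightarrow\partial B_m(v))$ via a BK decomposition at $H$, then perturbs a finite-size criterion $\phi_v(p,q)=K'L^d E_{p,q}\abs{S_L(v)}\le 1/2$ (a polynomial in $(p,q)$) and iterates. Your main sub-lemma $\chi^*\le M(1+\chi)$ is in fact cleanly available and needs no vague ``spatial Markov / first-hit'' machinery: the paper's own BK split (first $\mathsf{E}_H$-edge encountered on a path) gives, for any $x$,
\[
\chi(x;p,q)\;\le\;\chi^{-}(x;p)\;+\;\sum_{h\in H}P_{p,q}\bigl(x\xleftrightarrow{\mathbb{E}^d\setminus\mathsf{E}_H}h\bigr)\,\chi(h;p,q)\;\le\;M+M\,\chi(o;p,q),
\]
since $\chi^{-}(x;p)\le\chi_{\mathrm{hom}}(p)=:M$, $\sum_h P(x\xleftrightarrow{\mathbb{E}^d\setminus\mathsf{E}_H}h)\le M$, and $\chi(h;\cdot)=\chi(o;\cdot)$ by $\Gamma$-invariance. (Your Grimmett--Marstrand digression about the critical point of $(\mathbb{Z}^d,\mathbb{E}^d\setminus\mathsf{E}_H)$ is more than is needed for the observation $\mathcal{A}\Rightarrow p<p_c(d)$.)

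The genuine gap is in the last step: the inequality $\chi'(t)\le 4dM\chi(t)(1+\chi(t))$ is not something you can simply ``solve from $\chi(0)<\infty$,'' because $\chi(t)$ is not a priori finite or differentiable for $t>0$ — that finiteness is precisely the conclusion, so the argument as written is circular. The standard fix is to run Russo/BK on finite-volume approximants $\chi_n$, but here that is delicate: the BK bound involves $\sup_h E_{B_n}[\abs{\mathcal{C}(h)}]$ over $h\in H\cap B_n$, and bounding this by a centered finite-volume quantity forces a box-doubling $\chi_n'\le 4dM\chi_n(1+\chi_{2n})$, which does not close; bounding it instead by the infinite-volume $\chi$ reintroduces the circularity. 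One can likely repair this (e.g.\ working on slab-tori $(\mathbb{Z}/n\mathbb{Z})^s\times\{-m,\dots,m\}^{d-s}$ so that translation invariance kills the $\sup_h$, and invoking Theorem~3 of~\cite{ijm15} to control the torus/infinite-volume discrepancy), but at that point you are already using the exponential-decay input that drives the paper's argument, and the paper's route is the robustified version of yours: it works throughout with $P_{p,q}(v\leftrightarrow\partial B_m(v))$, which is always $\le 1$, so the perturbation of the finite-size criterion is immediate by continuity of a polynomial and no circularity can arise.
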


An immediate consequence of this proposition is the following result:
\begin{cor}\label{prop:continuity-at-0}
  The function $p\mapsto q_{c}(p)$ is continuous at $0$.
\end{cor}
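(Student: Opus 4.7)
The plan is to combine the trivial monotonicity of $q_c$ with the openness of $\mathcal{A}$ to pin down the right limit at $0$. Because $P_{p,q}(o\leftrightarrow\infty)$ is nondecreasing in $p$ (standard coupling), the set $\{q : P_{p,q}(o\leftrightarrow\infty)=0\}$ is nonincreasing in $p$, and hence so is $q_c$. Since $p=0$ is the left endpoint of the domain, continuity at $0$ reduces to showing $\lim_{p\to 0^+}q_c(p)=q_c(0)$, and monotonicity immediately yields the easy inequality $\limsup_{p\to 0^+}q_c(p)\le q_c(0)$. The work then lies in establishing the matching bound $\liminf_{p\to 0^+}q_c(p)\ge q_c(0)$.

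For this, I would first observe that at $p=0$ every edge outside $H$ is $P_{0,q}$-almost surely closed, so the cluster of the origin lies entirely in $H\cong\mathbb{Z}^s$ and $P_{0,q}$ agrees with homogeneous Bernoulli$(q)$ percolation on $\mathbb{Z}^s$. In particular $q_c(0)=p_c(s)$. Now fix any $q<q_c(0)$. The sharpness of the subcritical phase for homogeneous Bernoulli percolation on $\mathbb{Z}^s$ \cite{ab87,men86,dct16} gives $E_{0,q}\abs{C}<\infty$, so $(0,q)\in\mathcal{A}$. Proposition~\ref{prop:openess-subcritical-set} then furnishes some $\delta>0$ such that $(p,q)\in\mathcal{A}$ for all $p\in[0,\delta)$; in particular $P_{p,q}(\abs{C(o)}<\infty)=1$ and hence $P_{p,q}(o\leftrightarrow\infty)=0$, which by the very definition of $q_c$ means $q\le q_c(p)$ for such $p$. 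Letting first $p\to 0^+$ and then $q\uparrow q_c(0)$ produces $\liminf_{p\to 0^+}q_c(p)\ge q_c(0)$, completing the argument.

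The only substantive ingredient beyond the openness of $\mathcal{A}$ is the appeal to sharpness on $\mathbb{Z}^s$, which is what upgrades the qualitative statement ``no percolation at $(0,q)$'' into the quantitative input $E_{0,q}\abs{C}<\infty$ required by Proposition~\ref{prop:openess-subcritical-set}. I do not anticipate any genuine obstacle: the corollary is essentially a formal consequence of the proposition together with a well-known fact about homogeneous percolation, and the role of the hypothesis $s\ge 2$ (implicit for the statement to be meaningful, since otherwise $q_c(0)=1$) enters precisely in guaranteeing a nondegenerate subcritical interval on $\mathbb{Z}^s$.
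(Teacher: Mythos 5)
Your proof is correct and follows the route the paper clearly has in mind: the paper states the corollary as ``an immediate consequence'' of Proposition~\ref{prop:openess-subcritical-set} without spelling out the details, and the natural way to make that precise is exactly what you do, namely observe that $P_{0,q}$ restricted to the cluster of the origin is homogeneous Bernoulli$(q)$ percolation on $\mathbb{Z}^s$ so that sharpness gives $(0,q)\in\mathcal{A}$ for every $q<q_c(0)=p_c(s)$, then apply openness of $\mathcal{A}$ to push $q$ below $q_c(p)$ for small $p>0$, and finish with the easy monotone upper bound. No gaps; the role of $s\ge 2$ is correctly identified.
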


\begin{proof}[Proof of Proposition~\ref{prop:openess-subcritical-set}]
  Let $(p,q)\in\mathcal{A}$ and define
  $ B_{m}(v) \coloneqq \{v+x \vcentcolon x \in B_{m}\} $, $v \in \mathbb{Z}^{d}$. We first
  claim that there exists $ \lambda > 0 $, such that, for every $ v \in \mathbb{Z}^{d} $,
\begin{equation}
  \label{eq:uniform-decay}
  P_{p,q}(v \leftrightarrow \partial B_{m}(v)) \leq \exp(-\lambda m),~\forall
  m\in\mathbb{N}.
\end{equation}

To see this, note that if $ v \leftrightarrow \partial B_{m}(v) $, then one of the
following events occurs:
\begin{itemize}
\item $ v \xleftrightarrow{\mathbb{E}^{d} \setminus \mathsf{E}_{H}} \partial B_{m}(v) $;
\item for some $ h \in H \cap B_{m}(v) $, there are two disjoint witnesses for
  $ v \xleftrightarrow{\mathbb{E}^{d} \setminus \mathsf{E}_{H}} h $ and
  $ h \leftrightarrow \partial B_{m}(v) $.
\end{itemize}
Thus, a simple union bound argument followed by the BK-inequality~\cite{bk85} yields
\begin{align}
  P_{p,q}\paren[\big]{v \leftrightarrow \partial B_{m}(v)}
  & \leq P_{p,q}
    \paren[\big]{v \xleftrightarrow{\mathbb{E}^{d} \setminus \mathsf{E}_{H}} \partial B_{m}(v)}
    + \textstyle\sum_{h \in H \cap B_{m}(v)}
    P_{p,q}\paren[\big]{\{v \xleftrightarrow{\mathbb{E}^{d} \setminus \mathsf{E}_{H}} h\}
    \circ \{h \leftrightarrow \partial B_{m}(v)\}}\nonumber\\
  & \leq P_{p,q}
    \paren[\big]{v \xleftrightarrow{\mathbb{E}^{d} \setminus \mathsf{E}_{H}} \partial B_{m}(v)}
    + \textstyle\sum_{h \in H \cap B_{m}(v)}
    P_{p,q}(v \xleftrightarrow{\mathbb{E}^{d} \setminus \mathsf{E}_{H}} h)
    P_{p,q}(h \leftrightarrow \partial B_{m}(v)).
    \label{eq:unionbound-bk}
\end{align}

The events
$ \cbra[\big]{v \xleftrightarrow{\mathbb{E}^{d} \setminus \mathsf{E}_{H}} \partial
  B_{m}(v)} $ and
$ \cbra[\big]{v \xleftrightarrow{\mathbb{E}^{d} \setminus \mathsf{E}_{H}} h} $ do not
depend on the states of the edges in $ \mathsf{E}_H $. Since $ E_{p,q} \abs{C} < \infty $,
we have $ p < p_c(d) $. Hence, by the exponential decay of the one-arm event in the
subcritical regime for the homogeneous setting~\cite{ab87,dct16,men86}, there exists
$ \lambda_1 = \lambda_1 (p) > 0 $ such that
\begin{equation}
  \begin{aligned}
    P_{p,q}(v \xleftrightarrow{\mathbb{E}^{d} \setminus \mathsf{E}_{H}} \partial B_{m}(v))
    & \leq \exp(- \lambda_1 m),\\
    P_{p,q}(v \xleftrightarrow{\mathbb{E}^{d} \setminus \mathsf{E}_{H}} h)
    & \leq \exp(- \lambda_1 \norm{h-v}_{\infty}).
  \end{aligned}
  \label{eq:first-estimate}
\end{equation}

Since $ E_{p,q} \abs{C} < \infty $, by Theorem~3 of~\cite{ijm15}, there exists
$\lambda_2 = \lambda_2 (p,q) > 0$ such that
\begin{equation*}
  P_{p,q}(o \leftrightarrow \partial B_{m}) \leq \exp(-\lambda_2 m),~\forall
  m\in\mathbb{N}.
\end{equation*}

%
Hence, by the invariance of $ P_{p,q} $ under the translations parallel to $ H $, we have
\begin{align}
  P_{p,q}(h \leftrightarrow \partial B_{m}(v))
    & \le P_{p,q}(h \leftrightarrow \partial B_{m - \norm{h-v}_\infty}(h)) \nonumber\\
    & \le \exp(- \lambda_2 (m - \norm{h-v}_{\infty})).\label{eq:second-estimate}
\end{align}
Combining estimates~\eqref{eq:first-estimate} and~\eqref{eq:second-estimate}
in~\eqref{eq:unionbound-bk}, we obtain
\[
  P_{p,q}\paren[\big]{v \leftrightarrow \partial B_{m}(v)}
  \leq K m^{s} \exp(- \min(\lambda_1,\lambda_2) m) = \exp\{-(\log K + s \log m +
  \min(\lambda_1,\lambda_2) m)\},
\]
for some constant $ K > 0 $, so that~\eqref{eq:uniform-decay} follows for
$ \lambda = \min(\lambda_1,\lambda_2) + s + \log K$.

Having proved~\eqref{eq:uniform-decay}, let $ m \in \mathbb{N} $, $ v \in \mathbb{Z}^{d} $
and define the set
$ S_{m}(v) \coloneqq \{ x \in \partial B_{m}(v) \vcentcolon x
\xleftrightarrow{\mathsf{E}_{B_{m}(v)}} v \} $. Then, there exists $ K' > 0 $ satisfying
$ \abs{\partial B_{m}(v)} \leq K' m^d ~~\forall m \in \mathbb{N} $, so that
\[
  E_{p,q} \abs{S_{m}(v)} \leq \abs{\partial B_{m}(v)} P_{p,q}(v \leftrightarrow \partial
  B_{m}(v)) \leq K' m^{d} \exp(-\lambda m),~\forall m \in \mathbb{N},
\]
for every $ v \in \mathbb{Z}^{d} $. Thus, choose $ L \in \mathbb{N} $ such that,
\[
  \phi_{v}(p,q) \coloneqq K'L^{d}E_{p,q} \abs{S_{L}(v)} \leq 1/2,~\forall v \in
  \mathbb{Z}^{d},
\]
and note that $ \phi_{v}(p,q) $ is an increasing polynomial in both $ p $ and $ q $.
Due to the symmetry of the percolation process and the invariance of $ P_{p,q} $ under the
translations parallel to $ H $, there are at most $ L+1 $ different polynomials, each of
which corresponding to a choice of $ v $. Therefore, there exists $ \varepsilon > 0 $ such
that
\[
  \phi(p+\varepsilon,q+\varepsilon) \coloneqq \sup_{v\in\mathbb{Z}^{d}}
  \phi_{v}(p+\varepsilon,q+\varepsilon) < 1.
\]
Let $ k\in\mathbb{N} $ and fix $ v\in\mathbb{Z}^{d} $.
If the event $ \{ v \leftrightarrow \partial B_{kL}(v) \} $ occurs, then
$ S_{L}(v) \neq \emptyset $ and there exists $ y \in \partial B_{L}(v)$ such that
$ y \xleftrightarrow{\mathsf{E}_{B_{kL}}\setminus\mathsf{E}_{B_{L}(v)}} \partial B_{kL}(v)
$. These two events are independent, hence
\begin{align*}
  P_{p+\varepsilon,q+\varepsilon} (v \leftrightarrow \partial B_{kL}(v))
  & \leq \sum_{\substack{S \subset \partial B_{L}(v) \\ S \neq \emptyset}}
  \sum_{y \in \partial B_{L}(v)}
  P_{p+\varepsilon,q+\varepsilon}  (S_{L}(v) = S,
  y \xleftrightarrow{\mathsf{E}_{B_{kL}}\setminus\mathsf{E}_{S}} \partial B_{kL}(v))\\
  & = \sum_{\substack{S \subset \partial B_{L}(v) \\ S \neq \emptyset}}
  \sum_{y \in \partial B_{L}(v)}
  P_{p+\varepsilon,q+\varepsilon} (S_{L}(v) = S)
  P_{p+\varepsilon,q+\varepsilon}
  (y \xleftrightarrow{\mathsf{E}_{B_{kL}} \setminus \mathsf{E}_{S}} \partial B_{kL}(v))\\
  & \leq \sum_{\substack{S \subset \partial B_{L}(v) \\ S \neq \emptyset}}
  \sum_{y \in \partial B_{L}(v)}
  P_{p+\varepsilon,q+\varepsilon} (S_{L}(v) = S)
  P_{p+\varepsilon,q+\varepsilon} (y \xleftrightarrow{} \partial B_{(k-1)L}(y))\\  
  & = \sum_{y \in \partial B_{L}(v)} P_{p+\varepsilon,q+\varepsilon}
    (y \xleftrightarrow{} \partial B_{(k-1)L}(y))
    \sum_{\substack{S \subset \partial B_{L}(v) \\ S \neq \emptyset}}
  P_{p+\varepsilon,q+\varepsilon} (S_{L}(v) = S)\\
  & \leq \sup_{y\in\mathbb{Z}^{d}} \sbra[\big]{
    P_{p+\varepsilon,q+\varepsilon} (y \xleftrightarrow{} \partial B_{(k-1)L}(y))}
    K'L^{d} \sum_{\substack{S \subset \partial B_{L}(v) \\ S \neq \emptyset}}
  P_{p+\varepsilon,q+\varepsilon} (S_{L}(v) = S)\\
  & \leq \sup_{y\in\mathbb{Z}^{d}} \sbra[\big]{
    P_{p+\varepsilon,q+\varepsilon} (y \xleftrightarrow{} \partial B_{(k-1)L}(y))}
    \phi(p+\varepsilon,q+\varepsilon).
\end{align*}
Since $ v $ is arbitrary, writing
$ b_{k} \coloneqq \sup_{y\in\mathbb{Z}^{d}} \sbra[\big]{ P_{p+\varepsilon,q+\varepsilon}
  (y \xleftrightarrow{} \partial B_{kL}(y))}$, $ k \geq 1 $, the above result implies
\[
  b_{k} = b_{1}[\phi(p+\varepsilon,q+\varepsilon)]^{k-1}.
\]
Since $ \phi(p+\varepsilon,q+\varepsilon) < 1 $, we conclude that
$ P_{p+\varepsilon,q+\varepsilon} (o \leftrightarrow \partial B_{m}) $ decays
exponentially with $ m $, which implies
$ E_{p+\varepsilon,q+\varepsilon}\abs{C} < \infty $. Therefore,
$ (p+\varepsilon,q+\varepsilon) \in \mathcal{A} $ and $ \mathcal{A} $ is open.
\end{proof}

\section{Uniqueness for inhomogeneous percolation on $\mathbb{Z}^{d}$}
\label{sec:uniqueness}

We divide the proof of Theorem~\ref{thm:pq-uniqueness} in three cases: when
$ p<p_{c}(d) $, when $ p>p_{c}(d) $ and when $ p=p_{c}(d) $. Different techniques are used
in each situation. To deal with the case $ p<p_{c}(d) $, we develop a more general
background regarding a less restrictive bond percolation measure $ \mathbf{P} $ on a graph
$ G = (V,E) $, comprising the measure $ P_{p,q} $ on the lattice $ \mathbb{L}^{d} $ as a
particular instance, and then show the impossibility of having more than one infinite
cluster in the supercritical phase for the inhomogeneous percolation model. The argument
used here is an adaptation of the use of minimal spanning forests as in Chapter 7 of
\cite{lp2016}, together with the exponential decay of the probability of the one arm event
for subcritical homogeneous percolation, derived by Menshikov~\cite{men86}, Aizenman and
Barsky~\cite{ab87} and Duminil-Copin and Tassion~\cite{dct16}. When $ p>p_{c}(d) $, we
make use of the so-called mass transport principle as in Häggström and Peres~\cite{hp99}.
Finally, we consider the case $p=p_{c}(d)$, which relies on ergodicity arguments together
with the absence of the infinite cluster in the half space, proved by Barsky, Grimmett and
Newman\cite{bgn91}.


\subsection{General background}\label{sec:gen-background}

We begin with some definitions. A \emph{\textbf{(vertex)-automorphism}} of a graph
$ G = (V,E) $ is a bijection $ g \vcentcolon V \to V $ such that
$ \{ g (u),g (v) \} \in E $ if and only if $ \{ u,v \} \in E$. We write
$ \mathrm{Aut} (G) $ for the group of automorphisms of $ G $. Given a subgroup
$ \Gamma \subset \mathrm{Aut} (G) $, we say that $ \Gamma $ \emph{\textbf{acts
    transitively}} on $ G $ if, for any $ u,v \in V $, we have $ g(u)=v $ for some
$ g \in \Gamma $. We say that $ G $ is \emph{\textbf{transitive}} if
$ \mathrm{Aut} (G) $ itself acts transitively on $ G $.

For any bond percolation process $ (\Omega,\mathcal{F},\mathbf{P}) $ on $ G=(V,E) $,
note that every $ g \in \mathrm{Aut} (G) $ induces a transformation
$ \hat{g} \vcentcolon \Omega \to \Omega $, given by
\begin{align*}
  [\hat{g}(\omega)] (\{ u,v \})
  & = \omega \paren*{\cbra*{g^{-1}u,g^{-1}v}}, \quad \{ u,v \} \in E.
\end{align*}
We say that $ \mathbf{P} $ is $ \Gamma $\emph{\textbf{-invariant}} if
$ \mathbf{P} \paren*{\hat{g}A} = \mathbf{P} (A) $ for every $ A \in \mathcal{F} $ and
$ g \in \Gamma $.

Now, let
$ \mathcal{I}_{\Gamma} \coloneqq \cbra*{A \in \mathcal{F} \vcentcolon \hat{g}A = A,
  \forall \: g \in \Gamma} $. That is, $ \mathcal{I}_{\Gamma} \subset \mathcal{F} $
is the $ \sigma $-field of events of $ \mathcal{F} $ that are invariant under the
action of all elements of $ \Gamma $. We call the measure $ \mathbf{P} $
$ \Gamma $\emph{\textbf{-ergodic}} if $ \mathbf{P} (A) \in \{ 0,1 \} $ for every
$ A \in \mathcal{I}_{\Gamma} $.

Finally, given $ \omega \in \Omega $ and $ F \subset E $, let
\begin{align*}
  \Pi_{F}\omega (e)
  & \coloneqq
    \begin{cases}
      1, & \text{if } e \in F, \\
      \omega (e), & \text{if } e \notin F.
    \end{cases}
\end{align*}
That is, $ \Pi_{F} \omega \in \Omega $ is the configuration obtained by opening the
edges of $ F $ in $ \omega $. We also denote by $ \Pi_{\neg F} \omega $ the
configuration obtained by closing the edges of $ F $ in $ \omega $ (the same
expression as above, but with $ 0 $ in place of $ 1 $). For any event
$ A \in \mathcal{F} $, we define
$ \Pi_{F}A \coloneqq \{ \Pi_{F}\omega \vcentcolon \omega \in A \} $ and
$ \Pi_{\neg F}A \coloneqq \{ \Pi_{\neg F}\omega \vcentcolon \omega \in A \} $.

A bond percolation process $ \mathbf{P} $ on $ G $ is \emph{\textbf{insertion
    tolerant}} (resp.\ \emph{\textbf{deletion tolerant}}) if
$ \mathbf{P} (\Pi_{F}A) > 0 $ (resp.\ $ \mathbf{P} (\Pi_{\neg F}A) > 0 $) for any
finite subset $ F \subset E $ and any event $ A \in \mathcal{F} $ satisfying
$ \mathbf{P} (A) > 0 $. If a process is both insertion and deletion tolerant, it is
said to have the \emph{\textbf{finite energy property}}.

Having defined all the relevant concepts, from now on we regard $ \mathbf{P} $ as an
insertion-tolerant bond percolation process on $ G = (V,E) $, which is invariant and
ergodic for some subgroup $ \Gamma \subset \mathrm{Aut} (G) $. Moreover, for
$ S \subset V $, define
$ \mathsf{E}_{S} \coloneqq \cbra*{e \in E \vcentcolon e \subset S} $,
$ \Gamma|_{S} \coloneqq \{ g|_{S} \vcentcolon g \in \Gamma \} $ and
$ \mathcal{C}_{S} (u) \coloneqq \mathcal{C} (u) \cap S $. We shall also require that
there exists $ S \subset V $ such that $ \Gamma|_{S} $ acts transitively on the
subgraph $ ( S,\mathsf{E}_{S} ) $ and
\begin{align}
  \label{eq:percolation-uses-S}
  \mathbf{P} (\abs{\mathcal{C}(u)} =\infty, \abs{\mathcal{C}_{S} (u)} <\infty)
  & = 0 \quad \text{for every } u \in V.
\end{align}

One can note that $ P_{p,q} $ is a process of the above kind: as a matter of fact,
$ P_{p,q} $ is invariant under the translations of $ \mathbb{Z}^{d} $ parallel to the
sublattice $ H = \mathbb{Z}^{s} \times {\{0\}}^{d-s} $, and insertion-tolerance comes from
the fact that the states of the edges of $ \mathbb{E}^{d} $ are independent of each other.
For a proof of the ergodicity of $ P_{p,q} $ under $ \Gamma $, we refer to Proposition~7.3
of~\cite{lp2016}. A proof of condition~\eqref{eq:percolation-uses-S} with $ S = H $ is
postponed to the later sections.

For such percolation process $ \mathbf{P} $, note that the action of any element of
$ \Gamma $ on a configuration $ \omega \in \Omega $ does not change the value of
$ N_{\infty} $. Hence, $ N_{\infty} $ is measurable with respect to
$ \mathcal{I}_{\Gamma} $, and by ergodicity it is constant $ \mathbf{P} $-a.s.. Under
these conditions, we have the following result, due to Newman and Schulman~\cite{ns81}:
\begin{thm}\label{thm:zero-one-infty}
  Let $ G =(V,E) $ be a connected graph. Let $ \mathbf{P} $ be an insertion-tolerant bond
  percolation process on $ G $, which is invariant and ergodic under a subgroup
  $ \Gamma \subset \mathrm{Aut} (G) $. Then $ N_{\infty} \in \{ 0,1,\infty \} $
  $ \mathbf{P} $-a.s..
\end{thm}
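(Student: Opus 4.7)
The plan is to first invoke ergodicity to make $N_\infty$ deterministic, then use insertion tolerance to rule out every finite value $k \ge 2$, following Newman and Schulman.

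For the first step, I would note that for every $g \in \Gamma$ the map $\hat g$ carries the partition of $V$ into $\omega$-clusters bijectively onto the partition of $V$ into $\hat g \omega$-clusters, so $N_\infty \circ \hat g = N_\infty$ and $\{N_\infty = k\} \in \mathcal{I}_\Gamma$ for every $k$. By $\Gamma$-ergodicity, there is a deterministic $k \in \{0,1,2,\ldots\}\cup\{\infty\}$ with $\mathbf{P}(N_\infty = k) = 1$, and it suffices to rule out $2 \le k < \infty$.

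So suppose toward contradiction that $2 \le k < \infty$. With full probability there exist $k$ vertices of $V$ belonging to $k$ distinct infinite clusters; since $V^{k}$ is countable, countable additivity yields a \emph{deterministic} $k$-tuple $(v_1,\ldots,v_k) \in V^k$ for which the event $A = \{v_i \leftrightarrow \infty \text{ for each } i, \text{ and } v_i \not\leftrightarrow v_j \text{ for all } i \ne j\}$ satisfies $\mathbf{P}(A) > 0$. Using connectedness of $G$, fix a finite vertex set $W \supset \{v_1,\ldots,v_k\}$ that is connected in $G$ (for instance, the union of paths joining consecutive $v_i$'s), and let $F \subset \mathsf{E}_{W}$ be a spanning tree of such a path-union; then $F$ is a finite subset of $E$.

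On the event $A$, the configuration $\Pi_F \omega$ places all of $v_1,\ldots,v_k$ in a single cluster, so the $k$ distinct infinite clusters they index are merged into one; since only finitely many edges were opened, no new infinite cluster is created and no other infinite cluster is split. Thus $\Pi_F A \subset \{N_\infty \le k-1\}$. By insertion tolerance, $\mathbf{P}(\Pi_F A) > 0$, while $\mathbf{P}(N_\infty \le k-1) = 0$ under the standing assumption $\mathbf{P}(N_\infty = k) = 1$, the desired contradiction. The only genuinely delicate step is extracting the deterministic tuple $(v_1,\ldots,v_k)$: it rests precisely on the almost-sure constancy of $N_\infty$ combined with the countability of $V^k$, and once this tuple is produced the remainder is a routine insertion-tolerance computation that requires no further structural hypothesis on $G$ beyond connectedness.
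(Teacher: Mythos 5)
Your proof is correct and is essentially the Newman--Schulman argument that the paper invokes by reference (Theorem~7.5 of Lyons--Peres); the paper does not supply its own proof, so there is no internal argument to compare against beyond that citation. The structure is the standard one: ergodicity (the fact that $N_\infty$ is $\Gamma$-invariant forces it to be a.s.\ constant), extraction of a deterministic finite configuration witnessing the $k$ clusters via countability of $V^k$, and a finite insertion forced by connectedness of $G$ to merge them.

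One small imprecision worth tightening: as written, $\Pi_F A \subset \{N_\infty \le k-1\}$ is not literally true, because $A$ as you defined it does not bound $N_\infty$ from above, and on a zero-probability slice of $A$ one could have $N_\infty(\omega) > k$, in which case $N_\infty(\Pi_F\omega)$ might exceed $k-1$. The fix is trivial and standard: replace $A$ by $A' \coloneqq A \cap \{N_\infty = k\}$, which has the same (positive) probability since $\{N_\infty = k\}$ has full measure. On $A'$ the $k$ clusters indexed by $v_1,\dots,v_k$ are \emph{all} the infinite clusters, so opening $F$ gives $\Pi_F A' \subset \{N_\infty = 1\}$, and insertion tolerance applied to $A'$ yields $\mathbf{P}(N_\infty = 1) > 0$, contradicting $\mathbf{P}(N_\infty = k) = 1$ with $k \ge 2$. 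Everything else in your write-up, including the justification that opening finitely many edges cannot increase the number of infinite clusters, is sound.
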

We refer the reader to Theorem 7.5 of~\cite{lp2016} for a proof of this result.


Thus, what comes next is intended to rule out the case $ N_{\infty} = \infty $, using
a similar approach to Theorem 7.9 of \cite{lp2016}. We emphasize that, unless
$ p=q $, this result cannot be applied directly in the present situation: if
$ p \neq q $, the only subgroup $ \Gamma \subset \mathrm{Aut} ( \mathbb{L}^{d} ) $
for which $ P_{p,q} $ is invariant is that of the translations parallel to the
sublattice $ H $, and $ \Gamma $ does not act transitively on $ \mathbb{L}^{d} $, as
required by the theorem.

First, we introduce some sets of vertices and edges of a graph $ G=(V,E) $ that will
be needed in our proof. For a subset $ K \subset V $ and a subgraph
$ G'=(V',E') \subset G $, we define the \emph{\textbf{interior vertex boundary}} of
$ K $ in $ G' $, the \emph{\textbf{exterior vertex boundary}} of
$ K $ in $ G' $ and the \emph{\textbf{exterior edge boundary}} of $ K $ in $ G' $
respectively as the sets
\begin{equation}
  \begin{aligned}
  \partial_{G'}K
  & \coloneqq \{ y \in K
    \vcentcolon \exists x \in V' \setminus K \text{ such that }\{ x,y \} \in E' \}, \\
  \Delta_{v}^{G'}K
  & \coloneqq \{ y \in V' \setminus K
    \vcentcolon \exists x \in K \text{ such that }\{ x,y \} \in E' \}, \\
  \Delta_{e}^{G'}K
  & \coloneqq \{ \{ x,y \} \in E' \vcentcolon x \in K, y \in V' \setminus K \}.
  \end{aligned}
  \label{eq:boundary-notation}
\end{equation}
In particular, $\partial K=\partial_{G}K$, $ \Delta_{v}K \coloneqq \Delta_{v}^{G}K $ and
$ \Delta_{e}K \coloneqq \Delta_{e}^{G}K $. For any vertex $ u \in V $, we define the
\emph{\textbf{degree of the vertex}} $ u $ in $ K $ as the number
$ \deg_{K} (u) \coloneqq \abs*{\Delta_{v}\{ u \} \cap K} $. We also write
$ \deg (u) \coloneqq \deg_{V} (u) $.

The relation between these sets we are going to use is expressed in the next result,
which is Exercise~7.3 of~\cite{lp2016}. The idea of the proof is highlighted
in~\cite{lp2016} and therefore we shall omit it.

\begin{lem}\label{prop:bound-furcations}
  Let $ T = (V_{T},E_{T}) $ be a tree with $ \deg (u) \geq 2 $ for all
  $ u \in V_{T} $ and consider the set
  $ B \coloneqq \{ u \in V_{T} \vcentcolon \deg (u) \geq 3 \} $. Then, for every
  finite set $ K \subset V_{T} $, we have
  \begin{align}\label{eq:bound-furcations}
    \abs{\Delta_{v}K}
    & \geq \abs{K \cap B} + 2.
  \end{align}
\end{lem}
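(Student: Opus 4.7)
The plan is to reduce the desired vertex-boundary bound to a simpler bound on the edge boundary $|\Delta_e K|$ via an Euler formula for forests, and to prove the edge-boundary bound by handshake-lemma bookkeeping on the components of the induced forest $T[K]$. I assume $K \neq \emptyset$, as the statement is intended for non-trivial $K$.

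First, I would consider the subgraph $T[K]$ induced by $K$; since $T$ is a tree, $T[K]$ is a forest. Write $c \ge 1$ for its number of connected components $K_1,\ldots,K_c$, and observe that no edge of $T$ can join two distinct $K_i, K_j$ (such an edge would lie in $T[K]$ and merge them). Consequently $\Delta_e K = \bigsqcup_i \Delta_e K_i$. For each component, the tree $T[K_i]$ accounts for a contribution of $2(|K_i|-1)$ to the sum of degrees of its vertices in $T[K_i]$, whereas the hypotheses $\deg_T(u)\ge 2$ in general and $\deg_T(u)\ge 3$ for $u \in B$ yield
\[
\sum_{u \in K_i} \deg_T(u) \;\ge\; 2|K_i| + |K_i \cap B|.
\]
Subtracting the two gives $|\Delta_e K_i| \ge |K_i \cap B| + 2$, and summing over $i$ produces $|\Delta_e K| \ge |K \cap B| + 2c$.

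To convert this into a vertex-boundary bound, I would introduce the subforest $T' := (K \cup \Delta_v K,\, \mathsf{E}_K \cup \Delta_e K)$ of $T$, and call $c' \ge 1$ its number of connected components. The Euler identity $|V(T')| - |E(T')| = c'$ for forests reads
\[
|K| + |\Delta_v K| - \bigl((|K|-c) + |\Delta_e K|\bigr) = c',
\]
so $|\Delta_v K| = |\Delta_e K| + c' - c$. Combining with the previous step,
\[
|\Delta_v K| \;\ge\; |K \cap B| + c + c' \;\ge\; |K \cap B| + 2,
\]
as required.

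The part I expect to require the most care is precisely the last step, since distinct components of $T[K]$ may share external neighbors and in general $|\Delta_v K|$ is strictly smaller than $|\Delta_e K|$. The forest-Euler bookkeeping quantifies this loss as exactly $c - c'$, which is cleanly compensated by the slack $2c$ (rather than a bare $+2$) in the edge-boundary estimate, so the desired $+2$ survives in the end.
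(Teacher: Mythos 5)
Your proof is correct and complete; note that the paper itself omits the argument, deferring to Exercise~7.3 of Lyons--Peres, so there is no proof in the paper to compare against line by line. Your two-stage reduction — first bounding the edge boundary component-by-component via the handshake lemma ($\abs{\Delta_e K_i} \ge \abs{K_i\cap B}+2$ for each component $K_i$ of $T[K]$), then converting $\abs{\Delta_e K}\ge \abs{K\cap B}+2c$ into the vertex-boundary bound using the forest Euler identity on $T'=(K\cup\Delta_v K,\;\mathsf{E}_K\cup\Delta_e K)$ — is sound, and you correctly identified the subtle point (the $c-c'$ loss from $\Delta_e$ to $\Delta_v$ being absorbed by the $2c$ slack). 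Two small remarks: the case $K=\emptyset$ must indeed be excluded (as you noted), and you use implicitly that $c'\ge 1$, which follows from $K\neq\emptyset$. A marginally more direct route, which you may find worth knowing, works entirely inside the subforest $T'$: every vertex $u\in K$ retains its full $T$-degree in $T'$ (all edges of $T$ incident to $K$ lie in $\mathsf{E}_K\cup\Delta_e K$), so every leaf of $T'$ lies in $\Delta_v K$; the standard leaf count for a forest whose components each have at least two vertices gives
\begin{align*}
\abs{\Delta_v K}\ \ge\ \#\{\text{leaves of }T'\}\ \ge\ \abs{\{v\in V(T')\vcentcolon \deg_{T'}(v)\ge 3\}}+2c'\ \ge\ \abs{K\cap B}+2,
\end{align*}
collapsing your two reduction steps into one. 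Both arguments rest on the same elementary bookkeeping, so this is a stylistic rather than substantive difference.
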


Until the end of this section, it will be useful to keep in mind the correspondence
between the space $ \Omega = {\{ 0,1 \}}^{E}$ and the set of the subgraphs of
$ G=(V,E) $ induced by their open edges. We shall regard the configurations of
$ \Omega $ in both ways, referring to the most convenient manner when necessary.

We now state a version of Lemma~7.7 of~\cite{lp2016}, specifically designed to deal
with Bernoulli percolation on $ \mathbb{Z}^{d} $ with a sublattice of defects and
similar models. The proof of this version is carried out in the same way as that of
its counterpart in~\cite{lp2016}, with minor modifications, hence we shall omit it.
For the statement of the lemma, we need the following definition: a vertex
$ u \in V $ is called a \emph{\textbf{branching point}} of a configuration
$ \omega \in \Omega $ if $ u $ percolates in $ \omega $ and removing all edges
incident to $ u $ splits $ \mathcal{C} (u) $ into at least three distinct infinite
clusters. The \emph{\textbf{set of branching points}} of a configuration $ \omega $ will be
denoted by $ \Lambda (\omega) $. For $ S \subset V $, recall that
$ \mathcal{C}_{S} (u) \coloneqq \mathcal{C} (u) \cap S $.

\begin{lem}\label{lem:furcation}
  Let $ G=(V,E) $ be a connected graph and $ \mathbf{P} $ be an insertion-tolerant
  bond percolation process on $ G $. Suppose there exist a subgroup
  $ \Gamma \subset \mathrm{Aut} (G) $ and a connected set $ S \subset V $ such that
  \begin{enumerate}[label=\bfseries{\roman{enumi}.}]
    \item $ \mathbf{P} $ is invariant under $ \Gamma $;
    \item
      $ \mathbf{P} (\abs{\mathcal{C}(u)} =\infty, \abs{\mathcal{C}_{S} (u)} <\infty)=
      0 $ for every $ u \in V $.
  \end{enumerate}
  If $ \mathbf{P} ( \omega \vcentcolon N_{\infty} (\omega) = \infty ) > 0 $, then
  there exists, on a larger probability space
  $ \paren[\big]{\widetilde{\Omega}, \widetilde{\mathbf{P}}} $, a coupling
  $ (\mathfrak{F},\omega) $ with the following properties:
  \begin{enumerate}[label=\bfseries{\alph{enumi}.}]
    \item $ \mathfrak{F} \subset \omega $ and $ \mathfrak{F} $ is a random forest;
    \item The distribution of the pair $ (\mathfrak{F},\omega) $ is $ \Gamma $-invariant;
    \item $ \widetilde{ \mathbf{P} } ( \Lambda (\mathfrak{F}) \cap S \neq \emptyset ) > 0 $.
  \end{enumerate}
\end{lem}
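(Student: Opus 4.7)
The plan is to follow the strategy of Lemma~7.7 in Lyons--Peres, substituting the transitivity of $\Gamma$ on all of $V$ with the weaker hypothesis~(ii): every infinite cluster visits~$S$ infinitely often, combined with the transitivity of $\Gamma|_S$ on $S$. The construction of the coupling proceeds in three steps.

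\textbf{Step 1 (Labelled forest).} On the enlarged space $\widetilde{\Omega} = \Omega \times [0,1]^{E}$, with $\widetilde{\mathbf{P}} = \mathbf{P}\otimes\mathrm{Leb}$, attach to every edge an independent Uniform$[0,1]$ label $\mathsf{L}(e)$. The joint law of $(\omega,\mathsf{L})$ is still $\Gamma$-invariant. I define the free minimal spanning forest $\mathfrak{F}\subseteq\omega$ by the rule: an open edge $e = \{x,y\}$ belongs to $\mathfrak{F}$ iff $x$ and $y$ are in different components of $\{f\in\omega : \mathsf{L}(f)<\mathsf{L}(e)\}$. Standard properties of the free MSF give that $\mathfrak{F}$ is a forest contained in $\omega$, that its tree components span the clusters of $\omega$, and that $(\mathfrak{F},\omega)$ is $\Gamma$-invariant. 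This secures~(a) and~(b).

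\textbf{Step 2 (Three clusters visiting $S$).} Assuming $\mathbf{P}(N_{\infty}=\infty)>0$, Theorem~\ref{thm:zero-one-infty} promotes this to an a.s.\ statement. Fix $v_{0}\in S$; hypothesis~(ii) forces each infinite cluster to hit $S$ on an infinite set. Consequently, for $R$ large enough, the event
\[
  A_{R} \coloneqq \{\text{at least three distinct infinite clusters of }\omega\text{ intersect }B_{R}(v_{0})\cap S\}
\]
has positive $\widetilde{\mathbf{P}}$-probability. On $A_{R}$, select measurably (e.g.\ by lexicographic order) three witnesses $u_{1},u_{2},u_{3}\in B_{R}(v_{0})\cap S$ lying in three distinct infinite clusters.

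\textbf{Step 3 (Insertion to create a branching point in $S$).} Because $(S,\mathsf{E}_{S})$ is connected, for some $R'\ge R$ one can fix a vertex $v\in B_{R'}(v_{0})\cap S$ and a finite edge-set $F\subset\mathsf{E}_{S}$ consisting of three internally disjoint paths joining $u_{1},u_{2},u_{3}$ to $v$ inside $S$. By insertion tolerance, $\widetilde{\mathbf{P}}(\Pi_{F}A_{R})>0$. On $\Pi_{F}A_{R}$ the three original infinite components of $\omega\setminus\{v\}$ meeting $\{u_{1},u_{2},u_{3}\}$ remain, so $v$ is a branching point of $\omega$. A classical property of the free MSF then yields that $v\in\Lambda(\mathfrak{F})$: each infinite component of $\omega\setminus\{v\}$ contributes an infinite subtree to $\mathfrak{F}$ linked to $v$ through the minimum-label crossing edge, producing three infinite ends in $\mathfrak{F}$ emanating from $v$. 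Hence $\widetilde{\mathbf{P}}(\Lambda(\mathfrak{F})\cap S\ne\emptyset)>0$, proving~(c).

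\textbf{Main obstacle.} The subtlest point is Step~3. One must choose the triple $(u_{1},u_{2},u_{3},F,v)$ in a measurable way that is compatible with both $\Gamma$-invariance and insertion tolerance (which is stated only for cylinder-generated events), and one must verify that the free MSF genuinely inherits the branching structure of $\omega$ at $v$ — a fact that is automatic when $v$ separates $\mathcal{C}(v)$ into infinite pieces, but deserves explicit justification in the non-transitive setting. Everything else is a direct transcription of the Lyons--Peres argument, with $S$ playing the role of the ambient vertex set.
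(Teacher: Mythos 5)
Your plan follows the route the paper alludes to — the free minimal spanning forest construction of Lyons--Peres, modified so that the branching vertex is manufactured inside $S$ — and the overall structure is right. Two logical slips are worth flagging, one harmless, one more serious.

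The harmless one: you invoke Theorem~\ref{thm:zero-one-infty} to upgrade $\mathbf{P}(N_\infty=\infty)>0$ to an almost-sure statement, but that theorem requires ergodicity, which is \emph{not} among the hypotheses of Lemma~\ref{lem:furcation} (it only assumes $\Gamma$-invariance). This step is also unnecessary: all you need is $\widetilde{\mathbf{P}}(A_R)>0$ for some $R$, and that follows by monotone convergence directly from $\mathbf{P}(N_\infty=\infty)>0$ together with hypothesis~(ii) (which forces every infinite cluster to visit $S$, hence three distinct ones visit $B_R(v_0)\cap S$ for $R$ large).

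The more substantive point, which you correctly identify as the ``subtlest,'' is not merely a matter of measurability. Two issues: (1) Your witnesses $u_1,u_2,u_3$ lie in three distinct infinite \emph{clusters of $\omega$}, and you open paths in $\mathsf{E}_S$ to a vertex $v\in S$; after this insertion, $v$ is indeed a branching point of the new $\omega$, but you need $v\in\Lambda(\mathfrak{F})$, i.e.\ a branching point of the \emph{forest}. This is not automatic: the free MSF restricted to a single infinite cluster of $\omega$ can, in general, be a forest with several trees, and the tree $T_v$ containing $v$ need not send an infinite branch into each of the three infinite components of $\mathcal{C}(v)\setminus\{v\}$. (2) Relatedly, hypothesis~(ii) guarantees that each infinite \emph{cluster} hits $S$, not that each infinite \emph{tree} of $\mathfrak{F}$ does; the two are the same only when the free MSF of every infinite cluster is a.s.\ a single spanning tree. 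Neither issue is fatal to the Lyons--Peres template, but they need to be addressed — for instance by selecting the three witnesses from three distinct infinite \emph{trees} of $\mathfrak{F}$ meeting $B_R(v_0)$ (which have positive probability without using~(ii), since $\mathfrak{F}$ has infinitely many infinite trees whenever $\omega$ has infinitely many infinite clusters), and by conditioning on the labels of $F$ being small so that $F\subset\mathfrak{F}$ after insertion, so that $v$ is connected in $\mathfrak{F}$ to three infinite subtrees. Finally, a small remark on your ``main obstacle'': the selection of $(u_1,u_2,u_3,F,v)$ need not be $\Gamma$-equivariant (only positive probability of $\Lambda(\mathfrak{F})\cap S\neq\emptyset$ is needed, and $\mathfrak{F}$ itself is defined without reference to that choice), and the paper's notion of insertion tolerance quantifies over \emph{all} events $A\in\mathcal{F}$, not only cylinders, so that worry is unfounded.
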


When $ \mathbf{P} $ is insertion-tolerant and invariant under $ \mathrm{Aut} (G) $,
the uniqueness of the infinite cluster is established for amenable graphs by proving
that if $ \mathbf{P} ( \omega \vcentcolon N_{\infty} (\omega) = \infty ) > 0 $, then
$ G $ is non-amenable, see for example Theorems~7.6 and 7.9 of~\cite{lp2016}. What we
shall exhibit in the next result is a simple and straightforward generalization of
this fact. It will help us to make a proper argument regarding the uniqueness of the
infinite cluster for Bernoulli percolation on $ \mathbb{Z}^{d} $ with a
sublattice of defects, which is not invariant under $ \mathrm{Aut}(\mathbb{L}^{d}) $.
Although the proof is carried out in the same way as the theorems mentioned above, we
present it in the sequel to include the generalization step in the appropriate place.

For a graph $ G = (V,E) $, let $ S',S \subset V $ with $ \abs{S'} < \infty $, and define
\begin{equation}
  \label{eq:elemts-of-Sprime-connected-S}
    \mathcal{C} (S';S) \coloneqq \{ u \in S' \vcentcolon \exists v \in S \text{ such
    that } v \leftrightarrow u \}.
\end{equation}

\begin{lem}\label{lem:infty-many-cl-property}
  Let $ G =(V,E) $ be a connected graph and $ \mathbf{P} $ be an insertion-tolerant bond
  percolation process on $ G $. Suppose there exist a subgroup
  $ \Gamma \subset \mathrm{Aut} (G) $ and a connected set $ S \subset V $ such that the
  following conditions hold:
  \begin{enumerate}[label=\bfseries{\roman{enumi}.}]
    \item $ \mathbf{P} $ is invariant and ergodic under $ \Gamma $;
    \item
      $ \mathbf{P} (\abs{\mathcal{C}(u)} =\infty, \abs{\mathcal{C}_{S} (u)} <\infty)=
      0 $ for every $ u \in V $;
    \item $ \Gamma|_{S} $ acts transitively on the subgraph $ ( S,\mathsf{E}_{S} ) $,
      where $ \mathsf{E}_{S} \coloneqq \cbra*{e \in E \vcentcolon e \subset S} $.
  \end{enumerate}
  If $ \mathbf{P} ( \omega \vcentcolon N_{\infty} (\omega) = \infty ) > 0 $, then
  there exists a constant $ c>0 $ such that, for every finite set $ R \subset V $
  satisfying $ R \cap S \neq \emptyset $, we have
  \begin{align}\label{eq:non-amenability}
    \frac{\mathbf{E} \abs{\mathcal{C} ( \Delta_{v} R ; S ) }}{\abs{R \cap S}}
    & \geq c.
  \end{align}
\end{lem}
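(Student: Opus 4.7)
The plan is to combine Lemma~\ref{lem:furcation} with the tree inequality in Lemma~\ref{prop:bound-furcations} by a coupling-and-counting strategy, and then take expectations. Since hypotheses (i)--(ii) of Lemma~\ref{lem:furcation} are contained in the hypotheses of the present lemma and $\mathbf{P}(N_{\infty}=\infty)>0$ by assumption, that lemma produces on an enlarged space $(\widetilde{\Omega},\widetilde{\mathbf{P}})$ a $\Gamma$-invariant coupling $(\mathfrak{F},\omega)$ with $\mathfrak{F}\subset\omega$ a random forest and $\widetilde{\mathbf{P}}(\Lambda(\mathfrak{F})\cap S\neq\emptyset)>0$. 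Hypothesis (iii) together with the $\Gamma$-invariance of the pair $(\mathfrak{F},\omega)$ ensures that $p_{S}\coloneqq\widetilde{\mathbf{P}}(u\in\Lambda(\mathfrak{F}))$ does not depend on the choice of $u\in S$, and $\sigma$-subadditivity
\[
0<\widetilde{\mathbf{P}}\paren*{\Lambda(\mathfrak{F})\cap S\neq\emptyset}\leq\sum_{u\in S}\widetilde{\mathbf{P}}(u\in\Lambda(\mathfrak{F}))=p_{S}\cdot\abs{S}
\]
forces $p_{S}>0$; this will eventually be our constant $c$.

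Fix now a finite $R\subset V$ with $R\cap S\neq\emptyset$ and condition on a realization of $(\mathfrak{F},\omega)$. For each infinite tree $T$ of $\mathfrak{F}$ let $T^{\flat}$ denote its \emph{trunk}, i.e.\ the subgraph of $T$ spanned by the vertices lying on some bi-infinite simple path of $T$. One verifies directly that $T^{\flat}$ is either empty or a connected tree in which every vertex has degree at least $2$, and that the branching points of $\mathfrak{F}$ lying in $V_{T}$ coincide with $\cbra*{v\in V_{T^{\flat}}\vcentcolon\deg_{T^{\flat}}(v)\geq 3}$. Applying Lemma~\ref{prop:bound-furcations} to $T^{\flat}$ with the finite set $K\coloneqq R\cap V_{T^{\flat}}$ (the case $K=\emptyset$ being vacuous) yields
\[
\abs{\Delta_{v}^{T^{\flat}}(R\cap V_{T^{\flat}})}\geq\abs{R\cap V_{T}\cap\Lambda(\mathfrak{F})}.
\]

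Edges of $T^{\flat}$ are edges of $G$, so $\Delta_{v}^{T^{\flat}}(R\cap V_{T^{\flat}})\subset\Delta_{v}R$, and the sets $\Delta_{v}^{T^{\flat}}(R\cap V_{T^{\flat}})$ are pairwise disjoint as $T$ ranges over the distinct infinite trees of $\mathfrak{F}$. Moreover, every vertex of $V_{T^{\flat}}$ sits in an infinite cluster of $\omega$, and by hypothesis (ii) this cluster almost surely meets $S$; hence each such vertex is $\omega$-connected to $S$. Summing the previous inequality over infinite trees gives, almost surely,
\[
\abs{\mathcal{C}(\Delta_{v}R;S)}\geq\sum_{T}\abs{\Delta_{v}^{T^{\flat}}(R\cap V_{T^{\flat}})}\geq\abs{R\cap\Lambda(\mathfrak{F})}\geq\abs{R\cap S\cap\Lambda(\mathfrak{F})}.
\]
Taking $\widetilde{\mathbf{E}}$ and recalling that the $\omega$-marginal of $\widetilde{\mathbf{P}}$ is $\mathbf{P}$,
\[
\mathbf{E}\abs{\mathcal{C}(\Delta_{v}R;S)}\geq\sum_{u\in R\cap S}\widetilde{\mathbf{P}}(u\in\Lambda(\mathfrak{F}))=p_{S}\cdot\abs{R\cap S},
\]
which is \eqref{eq:non-amenability} with $c=p_{S}>0$.

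The delicate step is the transition from a count inside $\mathfrak{F}$ to the quantity $\mathcal{C}(\Delta_{v}R;S)$, which is defined through connectivity in $\omega$: hypothesis (ii) is precisely the bridge promoting ``vertex in an infinite tree of $\mathfrak{F}$'' to ``connected to $S$ in $\omega$''. The identification of the branching points of $\mathfrak{F}$ with the high-degree vertices of the trunk, although standard, also merits a careful verification using the tree structure.
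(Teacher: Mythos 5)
Your proof is correct and follows the paper's argument essentially step for step: Lemma~\ref{lem:furcation} produces the coupled forest, Lemma~\ref{prop:bound-furcations} is applied to the pruned infinite trees, and the counting and passage to expectations are identical. The only cosmetic difference is that you define the trunk $T^{\flat}$ as the vertices lying on bi-infinite simple paths, whereas the paper obtains the same subtree by iteratively deleting leaves (the two constructions agree for locally finite trees), and you spell out via $\sigma$-subadditivity why the constant $c=p_{S}$ is strictly positive, a point the paper asserts without elaboration.
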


Before proving this result, note that if $ \mathbf{P} $ is invariant and ergodic
under $ \Gamma $ and $ \Gamma $ acts transitively on $ G $, we can take $ S = V $ and
inequality~\eqref{eq:non-amenability} implies the non-amenability condition for
$ G = (V,E) $. Besides, we would like to stress the importance of the quantity
$ \mathbf{E} \abs{\mathcal{C} ( \Delta_{v} R ; S ) } $ in~\eqref{eq:non-amenability}.
If this term is replaced by a larger one, such as $ \abs{\Delta_{v} R} $, then it is
not possible to extract any useful information from our percolation model. For
instance, if $ B_{n} = \{ -n,\ldots,n \}^{d} $ and we consider the inhomogeneous
percolation process on $ \mathbb{Z}^{d} $ defined in Section~\ref{sec:intro} for
$ d = 3 $, $ H = \mathbb{Z}^{2} \times \{ 0 \} $ and $ p < p_{c}(3) < q < p_{c}(2) $,
it follows that there is a constant $ c > 0 $ such that
$ \abs{\Delta_{v} B_{n}} \geq c \abs{B_{n} \cap H} $ for all $ n \in \mathbb{N} $.
Nevertheless, we shall see in the next section that
inequality~\eqref{eq:non-amenability} does not hold for $ B_{n} $ on such model,
therefore $ P_{p,q} ( \omega \vcentcolon N_{\infty} (\omega) = \infty ) = 0 $.

\begin{proof}[Proof of Lemma~\ref{lem:infty-many-cl-property}]
  Let $ \widetilde{\mathbf{P}} $ and $ \mathfrak{F} $ be as in
  Lemma~\ref{lem:furcation}. Conditions \emph{\textbf{i}} -- \emph{\textbf{iii}}
  imply that there is a constant $ c>0 $ such that
  $ \widetilde{\mathbf{P}} ( u \in \Lambda (\mathfrak{F}) ) = c $ for every
  $ u \in S $. Hence, the expected number of branching points of $ \mathfrak{F} $ in
  $ R \cap S $ is
  \begin{align}\label{eq:first-expectation}
    \widetilde{\mathbf{E}} \abs{\Lambda ( \mathfrak{F} ) \cap R \cap S}
    & = \sum_{u \in R \cap S}
      \widetilde{\mathbf{P}} ( u \in \Lambda (\mathfrak{F}) ) = c \abs{R \cap S}.
  \end{align}
  Let $ \mathscr{T} $ be the set of the infinite components (trees) of
  $ \mathfrak{F} $. Also, consider the process of inductively removing the leaves of
  a tree. If we apply this process to any $ T = (V_{T},E_{T}) \in \mathscr{T} $, we
  are left, at the end of the procedure, with an infinite tree
  $ T'=(V_{T'},E_{T'}) \subset T $ that has no leaves and
  $ \Lambda (T') = \{ u \in V_{T'} \vcentcolon \deg (u) \geq 3 \} = \Lambda (T) $.
  Thus, an application of Lemma~\ref{prop:bound-furcations} with
  $ K = R \cap V_{T'} $ yields
  \begin{align*}
    \abs*{\Delta_{v}^{T} (R \cap V_{T})}
    & \geq \abs[\big]{\Delta_{v}^{T'} (R \cap V_{T'})} \\
    & \geq \abs{R \cap V_{T'} \cap \Lambda (T')} = \abs{R \cap \Lambda (T)}.
  \end{align*}
  Observing that
  $ \sbra*{\Delta_{v}^{T} (R \cap V_{T})} \subset \sbra*{\Delta_{v}R \cap V_{T}} $
  and summing up the above inequality over all trees $ T \in \mathscr{T} $, we arrive
  at
  \begin{align}\label{eq:bound-furcation-forest}
    \abs*{\Delta_{v} R \cap V_{\mathfrak{F}_{\infty}}}
    & \geq \abs{R \cap \Lambda (\mathfrak{F}_{\infty})}
      = \abs{R \cap \Lambda (\mathfrak{F})},
  \end{align}
  where $ \mathfrak{F}_{\infty}\coloneqq\bigcup_{T\in\mathscr{T}}T $.

  Finally, by property \textbf{a.} of Lemma~\ref{lem:furcation}, we have
  $ \mathfrak{F}_{\infty}\subset\omega_{\infty} $, where $ \omega_{\infty} $ is the
  union of all the infinite components of $ \omega $. Since every vertex of
  $ \omega_{\infty} $ is connected to $ S $ by condition \emph{\textbf{ii}} and
  $ \mathbf{P} ( N_{\infty} = \infty ) = 1 $ by ergodicity, it follows that
  $ \widetilde{\mathbf{P}} $-a.s.\
  \begin{align}\label{eq:forests-connected-s}
    \abs*{\Delta_{v} R \cap V_{\mathfrak{F}_{\infty}}}
    & \leq \abs*{\Delta_{v} R \cap V_{\omega_{\infty}}}
      \leq \abs*{\mathcal{C} (\Delta_{v}R;S)}.
  \end{align}
  Combining equations~\eqref{eq:bound-furcation-forest}
  and~\eqref{eq:forests-connected-s}, taking the expectation
  $ \widetilde{\mathbf{E}} $ and using equality~\eqref{eq:first-expectation}, we
  conclude that
  \begin{align*}
    \mathbf{E} \abs*{\mathcal{C} (\Delta_{v}R;S)}
    & \geq c \abs{R \cap S}.
  \end{align*}
\end{proof}

\subsection{Proof of Theorem~\ref{thm:pq-uniqueness}: the case
  $ p<p_{c}(d) $}\label{sec:p-subcritical}
Returning to the inhomogeneous percolation process on $ \mathbb{Z}^{d} $ defined in
Section~\ref{sec:intro}, we recall that the conditions of
Lemma~\ref{lem:infty-many-cl-property} are satisfied for $ \mathbf{P} = P_{p,q} $ and
$ S = H = \mathbb{Z}^{s} \times {\{ 0 \}}^{d-s} $. In the case $ p<p_{c}(d) $ and
$ P_{p,q} (N_{\infty} > 0) = 1 $, condition~\eqref{eq:percolation-uses-S} is
trivially satisfied since there is no infinite cluster on
$ \mathbb{Z}^{d} \setminus H $ almost surely. By Theorem~\ref{thm:zero-one-infty}, we
then have $ N_{\infty} \in \{ 0,1,\infty \} $ $ P_{p,q} $-a.s.. However, going in the
opposite direction of having infinitely many infinite clusters, we have the following
result:
\begin{prop}\label{lem:ppq-cl-property}
  Let $ B_{n} = \{ -n,\ldots,n \}^{d} $, $ n \in \mathbb{N} $. If $ p<p_{c}(d) $ and
  $q \in [0,1]$, then
  \begin{align*}
    \frac{E_{p,q} \abs*{\mathcal{C} (\Delta_{v} B_{n};H)}}{\abs{B_{n} \cap H}}
    & \xrightarrow[n \to \infty]{} 0.
  \end{align*}
  \begin{proof}
    By the exponential decay of the one arm event in the homogeneous model with
    parameter $ p<p_{c}(d) $ \cite{ab87,dct16,men86}, there exists a positive
    constant $ c_{p}>0 $ such that
    $ P_{p,q} (u \leftrightarrow H) \leq \exp \{ -c_{p} \dist (u,H) \} $ for any
    vertex $ u \in \mathbb{Z}^d $, where $ \dist (u,H) $ denotes the
    graph-theoretical distance between $ u $ and $ H $. Therefore, taking
    $ \alpha > (d-s-1)/c_{p} $ and observing that
    $ \Delta_{v} B_{n-1} \subset \partial B_{n} = B_{n} \setminus B_{n-1} $,
    we have
    \begin{align*}
      E_{p,q} \abs*{\mathcal{C} (\Delta_{v} B_{n-1};H)}
      & \leq E_{p,q} \abs*{\mathcal{C} (\partial B_{n};H)} \\
      & = \sum_{\substack{u \in \partial B_{n} \\ \dist (u,H) < \alpha \log n}}
      P_{p,q} ( u \leftrightarrow H)
      + \sum_{\substack{u \in \partial B_{n} \\ \dist (u,H) \geq \alpha \log n}}
      P_{p,q} ( u \leftrightarrow H) \\
      & \leq C \sbra*{n^{s-1} ( \alpha \log n )^{d-s}
        + n^{d-1} \exp \{ -c_{p} \alpha \log n \}} \\
      & \leq C' \abs{B_{n-1} \cap H} \times \sbra*{\frac{(\alpha \log n)^{d-s}}{n}
        + n^{d-s-1-c_{p}\alpha}},
    \end{align*}
    for positive constants $ C = C (s,d) $ and $ C' = C'(s,d) $. Observing that the
    last term in brackets goes to zero as $ n \to \infty $, the result follows.
  \end{proof}
\end{prop}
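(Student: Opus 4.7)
The plan is to reduce the problem to the exponential decay of the one-arm probability for homogeneous subcritical percolation at density $p<p_{c}(d)$. First I would establish the pointwise bound
\[
P_{p,q}(u \leftrightarrow H) \leq \exp(-c_{p}\dist(u,H)) \qquad \text{for all } u \in \mathbb{Z}^{d},
\]
where $c_{p}>0$ depends only on $p$. The key observation is that if $u \notin H$, any open path realising $u \leftrightarrow H$ can be truncated at its first visit to $H$; since every edge of $\mathsf{E}_{H}$ has both endpoints inside $H$, it cannot appear before that first visit, so the truncated path uses only edges of $\mathbb{E}^{d} \setminus \mathsf{E}_{H}$. Hence $\{u \leftrightarrow H\}$ coincides with $\{u \xleftrightarrow{\mathbb{E}^{d}\setminus\mathsf{E}_{H}} H\}$, which depends only on the $p$-open edges and is contained in $\{u \xleftrightarrow{\mathbb{E}^{d}\setminus\mathsf{E}_{H}} \partial B_{\dist(u,H)}(u)\}$. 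Since removing edges only decreases connectivity, this event has probability at most the one-arm probability at density $p$ on the full lattice $\mathbb{L}^{d}$, and the sharpness of the subcritical phase (Menshikov; Aizenman--Barsky; Duminil-Copin--Tassion) yields the desired exponential bound, uniformly in $q$.

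Second, by linearity of expectation and the inclusion $\Delta_{v}B_{n} \subset B_{n+1}\setminus B_{n}$,
\[
E_{p,q}\abs*{\mathcal{C}(\Delta_{v}B_{n};H)} \leq \sum_{u \in B_{n+1}\setminus B_{n}} \exp(-c_{p}\dist(u,H)),
\]
which I would split at the threshold $\dist(u,H) = \alpha\log n$ for a parameter $\alpha>0$ to be fixed later. Any vertex $u$ in the shell $B_{n+1}\setminus B_{n}$ with $\dist(u,H) < \alpha\log n$ must have its extremal coordinate (the one equal to $\pm(n+1)$) among the first $s$, since $\alpha \log n < n+1$ for large $n$ precludes the extremal coordinate from sitting in the last $d-s$ positions; this yields at most $O(n^{s-1}(\log n)^{d-s})$ such vertices, each contributing at most $1$. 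The remaining at most $O(n^{d-1})$ vertices each contribute at most $n^{-c_{p}\alpha}$.

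Finally, choosing $\alpha > (d-s-1)/c_{p}$ makes the far-part sum $o(n^{s})$, while the near-part sum is already $o(n^{s})$; hence the total is $o(n^{s})$ and the ratio with $\abs*{B_{n}\cap H} \asymp n^{s}$ tends to $0$. More than an obstacle, the main conceptual point is the first step: one must notice that the $q$-open edges inside $H$ are irrelevant to $\{u \leftrightarrow H\}$ when $u \notin H$, so the whole analysis collapses cleanly to subcritical homogeneous percolation at density $p$ and its sharp exponential decay, and the rest is a standard volume-versus-decay balance.
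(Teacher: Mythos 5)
Your proposal is correct and follows essentially the same route as the paper's proof: establish the pointwise bound $P_{p,q}(u\leftrightarrow H)\leq\exp(-c_p\dist(u,H))$ via the homogeneous subcritical one-arm decay, then split the shell sum at $\dist(u,H)=\alpha\log n$ with $\alpha>(d-s-1)/c_p$ and balance volume against decay. The only difference is that you spell out the truncation argument showing $\{u\leftrightarrow H\}$ depends only on the $p$-edges, whereas the paper states the pointwise bound without elaboration; this is a helpful clarification rather than a different method.
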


As an immediate consequence of Lemma~\ref{lem:infty-many-cl-property} and
Proposition~\ref{lem:ppq-cl-property}, we can rule out the case
$ N_{\infty} = \infty $ when $ p<p_{c}(d) $.

\begin{cor}
  If $ p < p_{c}(d) $ and $q \in [0,1]$ then $ N_{\infty} \in \{ 0,1 \} $
  $ P_{p,q} $-a.s..
\end{cor}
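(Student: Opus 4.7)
The plan is to combine the three main tools developed in Sections~\ref{sec:gen-background} and~\ref{sec:p-subcritical} in a direct way, so that the corollary falls out with essentially no new work. First I would apply Theorem~\ref{thm:zero-one-infty} to $\mathbf{P} = P_{p,q}$ and to the subgroup $\Gamma \subset \mathrm{Aut}(\mathbb{L}^d)$ of translations parallel to $H$: $P_{p,q}$ is insertion-tolerant and $\Gamma$-invariant-ergodic (both already recorded in Section~\ref{sec:gen-background}), so $N_\infty \in \{0,1,\infty\}$ $P_{p,q}$-a.s.\ and it suffices to rule out $N_\infty = \infty$.

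I would then argue by contradiction, assuming $P_{p,q}(N_\infty = \infty) > 0$ in order to apply Lemma~\ref{lem:infty-many-cl-property} with the same $\Gamma$ and with $S = H$. Hypotheses \textbf{i} and \textbf{iii} are essentially immediate: invariance, ergodicity, and insertion-tolerance of $P_{p,q}$ are already in hand, and transitivity of $\Gamma|_H$ on $(H,\mathsf{E}_H)$ is clear because translations parallel to $H$ act transitively on $H = \mathbb{Z}^s \times \{0\}^{d-s}$. The one hypothesis that takes any thought is~\eqref{eq:percolation-uses-S}: since $p < p_c(d)$, the marginal of $P_{p,q}$ on $(\mathbb{Z}^d,\mathbb{E}^d \setminus \mathsf{E}_H)$ is Bernoulli($p$) percolation on a subgraph of $\mathbb{L}^d$, hence subcritical, so almost surely all of its clusters are finite. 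Consequently any infinite cluster in the full model must pass through $H$ infinitely often, which is exactly~\eqref{eq:percolation-uses-S} with $S = H$.

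The conclusion is then immediate. Lemma~\ref{lem:infty-many-cl-property} supplies a constant $c > 0$ such that
\[
  \frac{E_{p,q}\abs{\mathcal{C}(\Delta_v R; H)}}{\abs{R \cap H}} \geq c
\]
for every finite $R \subset \mathbb{Z}^d$ meeting $H$. Specializing to $R = B_n$, which always contains the origin and therefore meets $H$, yields a uniform positive lower bound on the very ratio that Proposition~\ref{lem:ppq-cl-property} has just shown tends to $0$ as $n \to \infty$, a contradiction. Hence $P_{p,q}(N_\infty = \infty) = 0$, as required. The argument is essentially bookkeeping; the only step requiring any thought is the verification of~\eqref{eq:percolation-uses-S}, and it reduces to a one-line monotonicity-and-subcriticality observation.
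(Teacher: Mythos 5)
Your proof is correct and follows essentially the same route as the paper: apply Theorem~\ref{thm:zero-one-infty} to reduce to ruling out $N_\infty = \infty$, verify the hypotheses of Lemma~\ref{lem:infty-many-cl-property} with $S = H$ (with condition~\eqref{eq:percolation-uses-S} following from subcriticality of the $p$-only process on $\mathbb{E}^d\setminus\mathsf{E}_H$), and then derive a contradiction with Proposition~\ref{lem:ppq-cl-property} by taking $R = B_n$. No gaps; this is exactly the paper's argument.
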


\subsection{Proof of Theorem~\ref{thm:pq-uniqueness}: the case
  $ p > p_{c}(d) $}\label{sec:p-supercritical}
In order to work with the case $ p > p_{c}(d) $, recall that the set of edges whose
vertices both belong to the sublattice $ H = \mathbb{Z}^{s} \times {\{ 0 \}}^{d-s} $
is denoted by
$ \mathsf{E}_{H} \coloneqq \cbra*{e \in \mathbb{E}^{d} \vcentcolon e \subset H} $ and
let $ P $ be the probability measure associated with the family
$ \cbra*{U (e) \vcentcolon e \in \mathbb{E}^{d}} $ of i.i.d.\ random variables having
uniform distribution in $ \sbra*{0,1} $. Also, consider the decomposition
$ \mathbb{E}^{d} = E^{+} \cup E^{-} \cup \mathsf{E}_{H} $, where
$ E^{+}\coloneqq \cbra*{\{ x,y \} \in\mathbb{E}^{d}\vcentcolon (x_{d} \vee y_{d}) >
  0} $ and
$ E^{-} \coloneqq \mathbb{E}^{d} \setminus ( E^{+} \cup \mathsf{E}_{H} ) $, and for
$ p,q,t \in [0,1] $, let $ \omega_{p,q,t} \in {\{ 0,1 \}}^{\mathbb{E}^{d}} $ be the
Bernoulli bond percolation process on $ \mathbb{L}^{d} $ given by
\begin{align*}
  \omega_{p,q,t} (e)
  & \coloneqq
    \begin{cases}
      \mathbf{1}_{\{ U (e) \leq p \}}
      & \text{if } e \in E^{+},\\
      \mathbf{1}_{\{ U (e) \leq q \}}
      & \text{if } e \in \mathsf{E}_{H},\\
      \mathbf{1}_{\{ U (e) \leq t \}}
      & \text{if } e \in E^{-}.
    \end{cases}
\end{align*}
To establish the uniqueness of the infinite cluster when $ p > p_{c}(d) $, we make
use of the above coupling and the technique used in the proof of Proposition 3.1 of
\cite{hp99} to derive the following result:
\begin{prop}\label{prop:clusters-contained}
  If $ p > p_{c}(d) $ and $ q \in (0,1) $, then $ N_{\infty} = 1 $ $ P_{p,q} $-a.s..
\end{prop}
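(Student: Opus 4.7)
The plan is to work inside the coupling $\omega_{p,q,t}$ just introduced, comparing $\omega_{p,q,p}\sim P_{p,q}$ with $\omega_{p,p,p}\sim P_{p,p}$, and to adapt the mass transport argument of Proposition~3.1 of~\cite{hp99}.

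First I would reduce to ruling out $N_\infty=\infty$. The measure $P_{p,q}$ is insertion-tolerant, $\Gamma$-invariant, and $\Gamma$-ergodic (with $\Gamma$ the group of translations parallel to $H$), so Theorem~\ref{thm:zero-one-infty} yields $N_\infty\in\{0,1,\infty\}$ $P_{p,q}$-a.s. The case $N_\infty=0$ is excluded by stochastic domination: restricting $\omega_{p,q,p}$ to $E^+$ gives Bernoulli percolation with parameter $p>p_{c}(d)$ on edges lying in the upper half-space, which percolates by Grimmett--Marstrand. Thus $N_\infty\in\{1,\infty\}$ a.s., and it suffices to rule out the latter.

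Assume for contradiction that $P_{p,q}(N_\infty=\infty)=1$. Since $p>p_{c}(d)$, Burton--Keane gives that $\omega_{p,p,p}$ has a unique infinite cluster $\mathcal{C}^\star$ almost surely. Within the coupling, $\omega_{p,q,p}$ and $\omega_{p,p,p}$ agree on $E^+\cup E^-$ and can only differ on $\mathsf{E}_H$. Consequently, depending on whether $q>p$ or $q<p$, the multiple infinite clusters of $\omega_{p,q,p}$ arise either (a) by \emph{opening} extra edges of $\mathsf{E}_H$ that splice finite $\omega_{p,p,p}$-clusters into new infinite pieces disjoint from $\mathcal{C}^\star$, or (b) by \emph{closing} edges of $\mathsf{E}_H$ that fragment $\mathcal{C}^\star$ into infinitely many infinite pieces. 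Following~\cite[Prop.~3.1]{hp99}, I would construct a $\Gamma$-covariant mass transport $m\colon H\times H\to[0,\infty]$ whose value at $(u,v)$ encodes the ``responsibility'' of $u\in H$ for either linking or separating the cluster of $v$ in $\omega_{p,q,p}$ relative to $\omega_{p,p,p}$ (for instance, by attaching weight to the $\mathsf{E}_H$-edges in $F\coloneqq\omega_{p,q,p}\triangle\omega_{p,p,p}$ incident to $u$ and to the distinct infinite clusters they separate). Since $\Gamma$ acts transitively on $H$ (condition \textbf{iii} of Lemma~\ref{lem:infty-many-cl-property}), the mass transport principle applies on $H$, giving $\sum_{v\in H}\mathbf{E}[m(o,v)]=\sum_{v\in H}\mathbf{E}[m(v,o)]$ for a fixed $o\in H$. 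With $m$ chosen so that the outflow at $o$ is finite a.s.\ (it is bounded by a function of the local structure and the degree of $o$ in $\mathsf{E}_H$) while the inflow at $o$ has infinite expectation (because under the assumption $N_\infty=\infty$ the vertex $o$ is targeted by infinitely many distinct infinite clusters via edges of $F$), one obtains the required contradiction.

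The main obstacle is the precise design of $m$ in a setting where the group $\Gamma$ is only transitive on the lower-dimensional set $H$ rather than on all of $\mathbb{Z}^d$, which is what makes a direct application of~\cite[Prop.~3.1]{hp99} impossible. This forces the transport to be anchored at $H\times H$, and one must channel every piece of cluster information through $H$, which in turn requires the $p>p_{c}(d)$ analogue of condition~\eqref{eq:percolation-uses-S}, i.e., that every infinite cluster of $\omega_{p,q,p}$ meets $H$ almost surely. This latter fact follows, via Grimmett--Marstrand slab percolation and the uniqueness of $\mathcal{C}^\star$, because any infinite cluster of $\omega_{p,q,p}$ must intersect the unique infinite cluster of the supercritical Bernoulli-$p$ percolation on $\mathbb{E}^d\setminus\mathsf{E}_H$, which itself reaches every slab and hence every neighbourhood of $H$.
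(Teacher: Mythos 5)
Your overall framework is right --- use the coupling, compare $\omega_{p,q,p}$ to a sub-configuration with a known unique infinite cluster, and run a mass transport on $H$ --- but there is a genuine gap in your choice of comparison process, and the paper avoids it. You compare against $\omega_{p,p,p}$ and the symmetric difference $F = \omega_{p,q,p}\triangle\omega_{p,p,p}$, which means that when $q<p$ the inclusion $\omega_{p,p,p}\supset\omega_{p,q,p}$ goes the \emph{wrong} way for the H\"aggstr\"om--Peres monotone-coupling argument. Proposition~3.1 of~\cite{hp99} and its proof crucially use $\omega_1\subset\omega_2$ so that one can argue that each infinite $\omega_2$-cluster must swallow an infinite $\omega_1$-cluster; if you remove edges, the corresponding statement is false in general, and your case~(b) (fragmenting $\mathcal{C}^\star$) has no concrete transport backing it --- you would essentially be trying to rule out an insertion-tolerant process having infinitely many infinite clusters obtained by \emph{closing} a $\Gamma$-invariant set of edges, which is a different and harder problem than the one H\"aggstr\"om--Peres solve.

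The paper sidesteps this by taking $\omega_1 = \omega_{p,0,0}$, the configuration where \emph{all} edges in $\mathsf{E}_H\cup E^-$ are closed, i.e.\ Bernoulli-$p$ percolation on the upper half-space. This is contained in $\omega_{p,q,p}$ for \emph{every} $q\in[0,1]$, so the monotone direction always holds; moreover by Barsky--Grimmett--Newman~\cite{bgn91}, half-space percolation at $p>p_c(d)$ is supercritical and has a unique infinite cluster. This is the ingredient your sketch misses: you invoke Grimmett--Marstrand and the uniqueness of $\mathcal{C}^\star$ for all-of-$\mathbb{Z}^d$ percolation, but what you actually need is uniqueness and existence for a process that sits below $\omega_{p,q,p}$ for all $q$, and $\omega_{p,p,p}$ is not such a process.

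Finally, the mass transport itself is left as a gesture (``attach weight to edges in $F$ incident to $u$''). The paper's transport is more delicate: it singles out the vertex of $\mathcal{C}_H(u,\omega_2)$ closest to $\mathcal{C}_H(\infty,\omega_1)$, handles the cases $D_1(u)>1$ and $D_1(u)=1$ separately, and in each case derives a contradiction from the finiteness of the total inflow granted by the transport identity on $H$. That construction depends on having $\omega_1\subset\omega_2$ with $\omega_1$ supercritical and uniquely-percolating, which again points back to why $\omega_{p,0,0}$ is the right choice.
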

The proof of Proposition~\ref{prop:clusters-contained} relies on the so-called
\emph{\textbf{mass transport principle}}. As pointed out in \cite{hp99}, it was first
used in the percolation setting by Häggström~\cite{haggstrom1997} and fully developed
by Benjamini, Lyons, Peres and Schramm~\cite{blps99}. To our purposes, it suffices to
state a particular case of this principle, based on Theorem 2.1 of \cite{hp99}, to
which we refer the reader for a proof.
\begin{thm}[The Mass-Transport Principle]\label{thm:mtp}
  Let $ \Gamma \subset \mathrm{Aut} ( \mathbb{L}^{d} ) $ be the subgroup of
  translations parallel to the sublattice
  $ H = \mathbb{Z}^{s} \times \{ 0 \}^{d-s } $. If $ ( \Omega,\mathbf{P} ) $ is any
  $ \Gamma $-invariant bond percolation process on $ \mathbb{L}^{d} $ and
  $ m (x,y,\omega) $ is a nonnegative function of $ x,y \in H $,
  $ \omega \in \Omega $ such that
  $ m (x,y,\omega) = m (\gamma x,\gamma y, \gamma \omega) $ for all $ x $, $ y $ and
  $ \omega $ and $ \gamma \in \Gamma $, then
  \begin{align}\label{eq:mtp1}
    \sum_{y \in H} \int_{\Omega} m ( x,y,\omega ) \dif{\mathbf{P}} ( \omega )
    & = \sum_{y \in H} \int_{\Omega} m ( y,x,\omega ) \dif{\mathbf{P}} ( \omega )
      \quad \forall x \in H.
  \end{align}
\end{thm}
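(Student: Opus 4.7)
The plan is to exploit the $\Gamma$-invariance of both the measure $\mathbf{P}$ and the function $m$ to reduce the two-point quantity
\[
F(x,y) \coloneqq \int_{\Omega} m(x,y,\omega) \dif{\mathbf{P}}(\omega), \quad x, y \in H,
\]
to a function of the increment $y-x$, and then to identify the two sides of~\eqref{eq:mtp1} via a change of summation index. Since $\Gamma$ consists of translations by vectors of $H$, it acts transitively on $H$, which is precisely what makes such a reduction possible.

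First, I would check that $F(\gamma x, \gamma y) = F(x,y)$ for every $\gamma \in \Gamma$ and every $x, y \in H$. By $\Gamma$-invariance of $\mathbf{P}$, the change of variables $\omega = \gamma \omega'$ is measure-preserving, so
\[
F(\gamma x, \gamma y)
= \int_{\Omega} m(\gamma x, \gamma y, \gamma \omega') \dif{\mathbf{P}}(\omega')
= \int_{\Omega} m(x, y, \omega') \dif{\mathbf{P}}(\omega')
= F(x,y),
\]
where the middle equality uses the diagonal invariance of $m$. Choosing $\gamma$ to be the translation by $-x$ (which belongs to $\Gamma$ since $x \in H$) yields $F(x,y) = F(0, y-x)$. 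Setting $f(z) \coloneqq F(0,z)$, we may therefore write $F(x,y) = f(y-x)$ for all $x,y \in H$.

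With this in hand, the two sides of~\eqref{eq:mtp1} become $\sum_{y \in H} f(y-x)$ and $\sum_{y \in H} f(x-y)$ respectively. The substitutions $z = y-x$ and $z = x-y$ are each bijections of $H$, so both sums equal $\sum_{z \in H} f(z)$ and the identity follows. No genuine obstacle is expected: the only technicalities are the interchange of sum and integral, immediate from $m \geq 0$ via Tonelli's theorem, and the measure-preserving change of variables in the integral, which is the defining invariance $\mathbf{P}(\hat{\gamma} A) = \mathbf{P}(A)$ extended from indicators to nonnegative measurable functions by standard monotone-class approximation.
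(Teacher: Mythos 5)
Your proof is correct. The paper itself gives no proof of this statement, deferring to Theorem~2.1 of H\"aggstr\"om--Peres, and your argument is precisely the standard one used there for translation groups: diagonal invariance of $F(x,y)=\int m(x,y,\omega)\,\dif{\mathbf{P}}(\omega)$ reduces it to a function $f(y-x)$ of the increment, and both sides of~\eqref{eq:mtp1} then equal $\sum_{z\in H}f(z)$ by re-indexing (with no convergence issues since $m\ge 0$).
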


This result can be viewed as the mass transport principle applied just on the
sublattice $ H $. To make proper use of this technique, we must establish
condition~\eqref{eq:percolation-uses-S}, regarding the connected component
$ \mathcal{C} ( v,\omega_{p,q,t} ) $ of $ v \in \mathbb{Z}^{d} $ in the configuration
$ \omega_{p,q,t} $.
\begin{lem}\label{lem:cluster-intersect-H}
  If $ p > p_{c}(d) $ and $ q \in [0,1] $, then for every $ v \in H $ we have
  \begin{align}
    P ( \abs{\mathcal{C} ( v,\omega_{p,0,0} )} = \infty,
    \abs{\mathcal{C} ( v,\omega_{p,0,0} ) \cap H} < \infty )
    & = 0, \label{eq:cluster-intersect-H} \\
    P ( \abs{\mathcal{C} ( v,\omega_{p,q,p} )} = \infty,
    \abs{\mathcal{C} ( v,\omega_{p,q,p} ) \cap H} < \infty )
    & = 0. \label{eq:cluster-intersect-H2}
  \end{align}
  \begin{proof}
    Since the critical point for homogeneous percolation in half-spaces is
    $ p_{c}(d) $~\cite{bgn91}, we have
    $ P (\abs{\mathcal{C} (o,\omega_{p,0,0})} = \infty) > 0 $. Also, we know that
    $ P $ is $ \Gamma $-invariant, hence ergodicity implies that there are
    $ P $-a.s.\ infinitely many vertices in $ H $ belonging to an infinite cluster of
    $ \omega_{p,0,0} $ when $ p > p_{c} (d) $.
    Property~\eqref{eq:cluster-intersect-H} then follows from the uniqueness of the
    infinite cluster of $ \omega_{p,0,0} $, as mentioned in Barsky, Grimmett and
    Newman~\cite{bgn91}.

    To prove \eqref{eq:cluster-intersect-H2}, suppose that for some $ p > p_{c} (d) $
    and $ q \in [0,1] $, there is a finite set $ F \subset H $ such that the event
    \[
      B = \{ U \in [0,1]^{\mathbb{E}^{d}} \vcentcolon \abs{\mathcal{C} (
        o,\omega_{p,q,p}(U) )} = \infty, \; \mathcal{C} ( o,\omega_{p,q,p}(U) ) \cap
      H = F \}
    \]
    has positive probability. Since, for any $ U \in B $, every edge within
    $ \mathcal{C} ( o,\omega_{p,q,p}(U) ) $ that is incident to $ H $ is contained in
    $ \Delta_{e} F \cap \Delta_{e} H $, if we $ p $-close every edge in
    $ \Delta_{e} F \cap \Delta_{e} H $, we are mapped to a configuration $ U' $ such
    that, for some vertex $ x \in \Delta_{v} F \setminus H $, we have
    $ \abs{\mathcal{C} ( x,\omega_{p,t,p}(U') )} = \infty $ and
    $ \abs{\mathcal{C} ( x,\omega_{p,t,p}(U') ) \cap H} < \infty $ not only for
    $ t = q $, but for every $ t \in [0,1] $. In particular, this holds for
    $ t = p $. Therefore, denoting by $ B' $ the event of such configurations, the
    finite energy property implies that $ P(B') > 0 $. But this is a contradiction,
    since ergodicity and uniqueness of the infinite cluster of $ \omega_{p,p,p} $
    imply that there are almost surely infinitely many vertices in $ H $ belonging to
    the infinite cluster of $ \omega_{p,p,p} $ when $ p > p_{c} (d) $.
  \end{proof}
\end{lem}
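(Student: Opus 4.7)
The plan is to treat the two identities separately, using the Barsky--Grimmett--Newman theorem~\cite{bgn91} on half-space percolation as the key input for each: for $p > p_c(d)$, ordinary Bernoulli percolation on the half-space $\{x_d \ge 0\}$ of $\mathbb{Z}^d$ has a unique infinite cluster almost surely.

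For \eqref{eq:cluster-intersect-H}, I would observe that $\omega_{p,0,0}$ uses only edges in $E^+$, so it coincides on $\{x_d \ge 0\}$ with homogeneous Bernoulli percolation of parameter $p$, while every vertex with $x_d < 0$ is isolated. The unique infinite cluster $\mathcal{C}_\infty$ then contains each fixed $v \in H$ with positive probability. Since the event $\{|\mathcal{C}_\infty \cap H| < \infty\}$ is invariant under the subgroup $\Gamma$ of translations parallel to $H$ and $P$ is $\Gamma$-ergodic, its probability is $0$ or $1$; positivity of $P(v \in \mathcal{C}_\infty)$ rules out $|\mathcal{C}_\infty \cap H| = 0$ a.s., forcing $|\mathcal{C}_\infty \cap H| = \infty$ almost surely on the percolation event, and \eqref{eq:cluster-intersect-H} follows from uniqueness.

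For \eqref{eq:cluster-intersect-H2}, I would argue by contradiction via a finite-energy surgery. Fix $v \in H$ and suppose the event in question has positive probability; by countable additivity, pick a finite $F \subset H$ (with $v \in F$) such that $B = \{|\mathcal{C}(v, \omega_{p,q,p})| = \infty,\ \mathcal{C}(v, \omega_{p,q,p}) \cap H = F\}$ has positive probability. The only open edges that connect $F$ to $\mathcal{C}(v) \setminus F$ must lie in the finite set $\Delta_e F \cap \Delta_e H$. Close all of these for the $p$-threshold to map $B$ into an event $B'$ of positive probability by finite energy of $P$. Because only finitely many edges are cut, $\mathcal{C}(v, U) \setminus F$ retains at least one infinite connected component $C$ in the modified configuration $U'$, and $C$ contains a vertex $x \in \Delta_v F \setminus H$ at which $\mathcal{C}(v, U)$ originally left $H$.

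The main obstacle is then to extend the conclusion from $t = q$ to every $t \in [0,1]$, which is what ultimately brings in the homogeneous model. At $t = q$ the cluster $\mathcal{C}(x, \omega_{p,q,p}(U'))$ coincides with $C$, hence lies in $\mathbb{Z}^d \setminus H$ and uses no edge of $\mathsf{E}_H$. Changing $t$ only alters $\mathsf{E}_H$-edges, so any path from $x$ to $H$ in $\omega_{p,t,p}(U')$ would first enter $H$ along an edge of $\Delta_e H \setminus \mathsf{E}_H$, producing a path to $H$ using only non-$\mathsf{E}_H$-edges---already present in $\omega_{p,q,p}(U')$, a contradiction. Hence $\mathcal{C}(x, \omega_{p,t,p}(U'))$ is infinite and disjoint from $H$ for all $t \in [0,1]$. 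Specializing to $t = p$ places positive $P$-mass on configurations in which the homogeneous model $\omega_{p,p,p}$ has an infinite cluster missing $H$; but classical uniqueness for $p > p_c(d)$, together with the $\Gamma$-ergodicity argument from the first part, forces the unique infinite cluster of $\omega_{p,p,p}$ to hit $H$ infinitely often, yielding the desired contradiction.
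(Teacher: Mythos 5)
Your proof is correct and follows essentially the same route as the paper's: both parts invoke Barsky--Grimmett--Newman together with $\Gamma$-ergodicity for~\eqref{eq:cluster-intersect-H}, and both prove~\eqref{eq:cluster-intersect-H2} by the same finite-energy surgery (closing the edges in $\Delta_e F \cap \Delta_e H$) followed by reduction to the homogeneous case $t=p$. Your write-up actually makes explicit a step the paper leaves implicit --- namely why the modified cluster stays away from $H$ simultaneously for every $t\in[0,1]$, via the ``first entry into $H$ uses no $\mathsf{E}_H$-edge'' observation --- which is a useful clarification rather than a different argument.
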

\begin{proof}[Proof of Proposition~\ref{prop:clusters-contained}]
  We shall show that every infinite cluster of $ \omega_{p,q,p} $ contains an
  infinite cluster of $ \omega_{p,0,0} $. Uniqueness for $ \omega_{p,q,p} $ follows
  from the fact that the cluster of $ \omega_{p,0,0} $ is almost surely unique. This
  is the same proof as that of Proposition~3.1 of~\cite{hp99}. We present the
  reasoning again to indicate the places where Lemma~\ref{lem:cluster-intersect-H}
  should be applied.

  Let $ \omega = ( \omega_{1},\omega_{2} ) $ be the coupling of the processes
  $ \omega_{1} = \omega_{p,0,0} $ and $ \omega_{2} = \omega_{p,q,p} $, with
  $ p > p_{c}(d) $ and $ q \in [0,1] $, and denote by $ P_{i} $ the marginal
  distribution of $ \omega_{i} $, $ i = 1,2 $. Let $ \mathcal{C} ( u,\omega_{i} ) $
  be the connected component of $ u \in \mathbb{Z}^{d} $ in the configuration
  $ \omega_{i} $ and $ \mathcal{C} ( \infty,\omega_{i} ) $ be the union of the
  infinite clusters in the configuration $ \omega_{i} $. Since $ P_{i} $ is invariant
  only by automorphisms $ \gamma \in \mathrm{Aut} (\mathbb{L}^{d}) $ satisfying
  $ \gamma (H) = H $, we shall use properties~\eqref{eq:cluster-intersect-H}
  and~\eqref{eq:cluster-intersect-H2} to restrict our analysis to the sublattice
  $ H $. Hence, we also consider the random sets
  $ \mathcal{C}_{H} ( u,\omega_{i} ) \coloneqq \mathcal{C} ( u,\omega_{i} ) \cap H $
  and
  $ \mathcal{C}_{H}(\infty,\omega_{i})\coloneqq \mathcal{C}(\infty,\omega_{i}) \cap H
  $.

  For $ u,v \in \mathbb{Z}^{d} $, recall that $ \dist (u,v) $ denotes the
  graph-theoretic distance between $ u $ and $ v $. Given $ u \in H $, define
  \begin{align*}
    D_{1} (u)
    & \coloneqq
      \inf \{ \dist (u,v) \vcentcolon v \in \mathcal{C}_{H} ( \infty, \omega_{1} ) \}; \\
    A (u)
    & \coloneqq
      \{ D_{1} (u) > 0 \} \cap \cbra*{D_{1} (u) =
      \min_{v \in \mathcal{C}_{H} ( u,\omega_{2} )} D_{1} (v)}.
  \end{align*}
  That is, $ A (u) $ is the event where $ u \in H $ is one of the vertices of
  $ \mathcal{C}_{H} ( u,\omega_{2} ) $ that are closest to
  $ \mathcal{C}_{H} ( \infty,\omega_{1} ) $ in the configuration $ \omega_{1} $, this
  distance being positive.

  By properties~\eqref{eq:cluster-intersect-H} and \eqref{eq:cluster-intersect-H2},
  every connected component of $ \mathcal{C} (\infty,\omega_{i}) $, $ i = 1,2 $,
  intersects $ H $ at infinitely many vertices almost surely. Hence, since
  $ \omega_{1} \subset \omega_{2} $, if
  $ u \in \mathcal{C}_{H} ( \infty,\omega_{2} ) $, then one of the following events
  occur:
  \begin{itemize}
    \item $ u \in \mathcal{C}_{H} ( \infty,\omega_{1} ) $;
    \item $ u \notin \mathcal{C}_{H} ( \infty,\omega_{1} ) $,
      $ \exists v \in \mathcal{C} ( \infty,\omega_{1} ) $ such that
      $ u \in \mathcal{C}_{H} ( v,\omega_{2} ) $;
    \item $ u \notin \mathcal{C}_{H} ( \infty,\omega_{1} ) $,
      $ \forall v \in \mathcal{C} ( \infty,\omega_{1} ) $,
      $ u \notin \mathcal{C}_{H} ( v,\omega_{2} ) $,
      $ \abs{\mathcal{C} ( u,\omega_{2} )} = \infty $.
  \end{itemize}
  For any configuration in the first two events, it follows that
  $ \mathcal{C} ( u,\omega_{2} ) $ contains an infinite cluster of
  $ \mathcal{C} ( \infty,\omega_{1} ) $. For any $ \omega = (\omega_{1},\omega_{2}) $
  in the last event, there exists a vertex $ x \in \mathcal{C}_{H} ( u,\omega_{2} ) $
  such that
  $ D_{1}(x) = \min_{v \in \mathcal{C}_{H} ( u,\omega_{2} )} D_{1}(v) > 0 $. In other
  words, this configuration belongs to the event
  $ \bigcup_{x \in H} [\{ \abs{\mathcal{C} ( x,\omega_{2} )} = \infty \} \cap A (x)] $.
  Therefore, the proposition is proved if we show that
  \[
    P ( \{ \abs{\mathcal{C} ( u,\omega_{2} )} = \infty \} \cap A (u) ) = 0 \quad
    \forall u \in H.
  \]

  We begin by analyzing the event
  $ \{ \abs{\mathcal{C} (u,\omega_{2})} = \infty \} \cap A (u) \cap \{ D_{1} (u) > 1
  \} $. For $ u,v \in H $, let
  \begin{align*}
    A_{u,v}
    & \coloneqq \{ v \in \mathcal{C}_{H} (u,\omega_{2}) \}
      \cap \cbra[\bigg]{0 < D_{1} (v)
      < \min_{\substack{w \in \mathcal{C}_{H} ( u,\omega_{2} ) \\ w \neq v}} D_{1} (w)},
  \end{align*}
  that is, $ A_{u,v} $ is the event in which
  $ v \in \mathcal{C}_{H} ( u,\omega_{2} ) $ and is the only vertex of
  $ \mathcal{C}_{H} ( u,\omega_{2} ) $ that is closest to
  $ \mathcal{C}_{H} ( \infty,\omega_{1} ) $ in the configuration $ \omega_{1} $.

  For every
  $ \omega = (\omega_{1},\omega_{2}) \in \{ \abs{\mathcal{C} ( u,\omega_{2} )} =
  \infty \} \cap A (u) \cap \{ D_{1} (u) > 1 \} $, if we open (in $ \omega_{2} $
  only) an edge $ \{ u,w \} \in \mathsf{E}_{H} $ with $ D_{1} (w) = D_{1} (u) - 1 $
  and close every other edge incident to $ w $, we are mapped to a configuration in
  $ B_{w,w} \coloneqq \{ \abs{\mathcal{C} ( w,\omega_{2} )} = \infty \} \cap A_{w,w}
  $. Since $ P_{2} $ has the finite energy property, if we show that
  $ P (B_{w,w}) = 0 $, then we must have
  $ P ( \{ \abs{\mathcal{C} ( u,\omega_{2} )} = \infty \} \cap A (u) \cap \{ D_{1}
  (u) > 1 \} ) = 0 $, and the first part of the proof is completed.

  Define $ m ( u,v,\omega ) \coloneqq \mathbf{1}_{A_{u,v}} ( \omega ) $ and, as in
  Theorem~\ref{thm:mtp}, let $ \Gamma \subset \mathrm{Aut} ( \mathbb{L}^{d} ) $ be
  the subgroup of translations parallel to the sublattice
  $ H = \mathbb{Z}^{s} \times {\{ 0 \}}^{d-s} $. Since $ P $ is $ \Gamma $-invariant,
  $ m (x,y,\omega) = m (\gamma x,\gamma y, \gamma \omega) $ for all $ x,y \in H $,
  $ \omega = (\omega_{1},\omega_{2}) $ and $ \gamma \in \Gamma $, and
  $ A_{u,v} \cap A_{u,w} = \emptyset $ if $ v \neq w $, the mass-transport
  principle~\eqref{eq:mtp1} yields
  \begin{equation}\label{eq:mtp2}
    \begin{aligned}
      \int_{\Omega} \sum_{v \in H} m ( v,u,\omega ) \dif{P} (\omega)
      & = \sum_{v \in H} \int_{\Omega} m ( u,v,\omega ) \dif{P} (\omega) \\
      & = \sum_{v \in H} P ( A_{u,v} ) = P \paren[\Big]{\bigcup_{v \in H} A_{u,v}} < 1.
    \end{aligned}
  \end{equation}

  By property~\eqref{eq:cluster-intersect-H2}, we have
  $ \abs{\mathcal{C}_{H} ( u,\omega_{2} )} = \infty $ almost surely for every
  configuration
  $ \omega \in B_{u,u} \coloneqq \{ \abs{\mathcal{C} ( u,\omega_{2} )} = \infty \}
  \cap A_{u,u} $, and consequently $ \sum_{v \in H} m ( v,u,\omega ) = \infty $ for
  every $ \omega \in B_{u,u} $. This fact implies $ P ( B_{u,u}) = 0 $ for all
  $ u \in H $, since otherwise we would have
  \begin{align*}
    \int_{\Omega} \sum_{v \in H} m ( v,u,\omega ) \dif{P} (\omega)
    & \geq \int_{B_{u,u}} \sum_{v \in H} m ( v,u,\omega ) \dif{P} (\omega)
      = \int_{B_{u,u}} \infty \dif{P} (\omega) = \infty,
  \end{align*}
  a contradiction with \eqref{eq:mtp2}.

  Now, it remains to show that
  $ P ( \{ \abs{\mathcal{C} ( u,\omega_{2} )} = \infty \} \cap A (u) \cap \{ D_{1}
  (u) = 1 \} ) = 0 $. For a subset $ V \subset \mathbb{Z}^{d} $ and $ x,y \in V $,
  let $ \dist_{V}(x,y) $ be the graph-theoretic distance between $ x $ and $ y $ in
  the subgraph of $ \mathbb{Z}^{d}$ induced by $ V $. For $ w \in H $, define the
  random set
  \begin{align*}
    S (w)
    & \coloneqq
      \begin{cases}
        \emptyset, & \text{if } w \notin \mathcal{C}_{H} ( \infty,\omega_{1} ),\\
        \begin{Bmatrix}
          v \in \mathcal{C}_{H} ( w,\omega_{2} ) \vcentcolon
          \dist_{\mathcal{C} ( w,\omega_{2} )}(v,w)\\
          < \dist_{\mathcal{C} ( w,\omega_{2} )}(v,x) \;\forall x \in \mathcal{C} (
          \infty,\omega_{1} ) \setminus \{ w \}
        \end{Bmatrix}, & \text{if } w \in \mathcal{C}_{H} ( \infty,\omega_{1} ).
      \end{cases}
  \end{align*}
  That is, $ S (w) $ is the set of vertices
  $ v \in \mathcal{C}_{H} ( w,\omega_{2} ) $ such that $ w $ is the only vertex of
  $ \mathcal{C} ( \infty,\omega_{1} ) $ closest to $ v $ in the metric of
  $ \mathcal{C} ( w,\omega_{2} ) $.

  Note that, for any
  $ \omega = (\omega_{1},\omega_{2}) \in \{ \abs{\mathcal{C} ( u,\omega_{2} )} =
  \infty \} \cap A (u) \cap \{ D_{1} (u) = 1 \} $, if we open (in $ \omega_{2} $
  only) an edge $ \{ u,w \} \in \mathsf{E}_{H} $ with
  $ w \in \mathcal{C}_{H} ( \infty,\omega_{1} ) $, we are mapped to a configuration
  in $ \{ \abs{S (w)} = \infty \} $. Since $ P_{2} $ is insertion-tolerant, we
  conclude that
  $ P ( \{ \abs{\mathcal{C} ( u,\omega_{2} )} = \infty \} \cap A (u) \cap \{ D_{1}
  (u) = 1 \} ) = 0 $ if we show that $ P (\abs{S (w)} = \infty) = 0 $.

  Let $ m ( u,w,\omega ) = \mathbf{1}_{\{ u \in S (w) \}} $. Again by the
  mass-transport principle~\eqref{eq:mtp1}, we have
  \begin{equation}
    \begin{aligned}
      \int_{\Omega} \sum_{w \in H} m ( w,u,\omega ) \dif{P} ( \omega )
      & = \sum_{w \in H} \int_{\Omega} m ( u,w,\omega ) \dif{P} ( \omega ) \\
      & = \sum_{w \in H} P( u \in S(w) ) = P\paren[\Big]{\bigcup_{w \in H} \{ u \in S(w)
        \}} < 1.
    \end{aligned}
    \label{eq:mtp3}
  \end{equation}
  By property~\eqref{eq:cluster-intersect-H2}, we have
  $ \sum_{w \in H} m ( w,u,\omega ) = \infty $ for any
  $ \omega \in \{ \abs{S (u)} = \infty \} $, and this fact together with
  \eqref{eq:mtp3} implies $ P ( \abs{S (u)} = \infty ) = 0 $, similarly to the
  previous case. Since $ P_{2} $ is insertion-tolerant, we conclude that
  $ P ( \{ \abs{\mathcal{C} ( u,\omega_{2} )} = \infty \} \cap A (u) \cap \{ D_{1}
  (u) = 1 \} ) = 0 $.
\end{proof}

\begin{rem}
  By a similar reasoning, we can extend Proposition~\ref{prop:clusters-contained} to
  include the degenerate case $ q=0 $. The case $ q=1 $ is trivial.
\end{rem}

\subsection{Proof of Theorem~\ref{thm:pq-uniqueness}: the case
  $ p = p_{c}(d) $, $ s = d-1 $}\label{sec:p-critical}

We end the proof of Theorem~\ref{thm:pq-uniqueness} considering the case $p=p_{c}(d)$,
$ s=d-1 $. The proof for this case also works for $p<p_{c}(d)$ and uses a more concrete
approach than the one developed in Section~\ref{sec:gen-background}. For simplicity, we assume that $q\notin \{0,1\}$: see Remark~\ref{rem:trivialparam}.

By Theorem~\ref{thm:zero-one-infty}, we know that $ N_\infty \in \{0,1,\infty\} $
almost-surely for $ P_{p,q} $. Then, let us proceed by contradiction and assume that
$ P_{p,q}(N_\infty = \infty) = 1 $ for some $ q \in [0,1] $ and $ p \leq p_c(d) $.
Let $ \mathcal{G} $ be the set of all branching points that belong to $ H $.
Recall that $ \Gamma $ is the group of translations parallel to the hyperplane $ H $. By
the $ \Gamma $-invariance of $ P_{p,q} $ and the finite energy property, we have
$P_{p,q}(x \in \mathcal{G}) = t > 0$ for every $ x \in H $.

Let
$ B_n \coloneqq \{ 0, 1,\ldots,n \}^s \times \{ -\floor{\log n}, \ldots, \floor{\log n} \}
$, $ n \in \mathbb{N} $. We shall study the consequences of having a ``reasonable amount''
of branching points inside $ B_n \cap H $, for large values of $ n $. First, let
$ \mathcal{G}_n \coloneqq \mathcal{G} \cap B_n $. By the $\Gamma$-ergodicity of $P_{p,q}$
and the ergodic theorem,
$\abs*{\mathcal{G}_{n}} / \abs*{B_{n} \cap H} \xrightarrow[n \to \infty]{} t$ in
probability. Consequently,
\begin{equation}\label{eq:lots-of-good-branching-points}
  P_{p,q}(\abs{\mathcal{G}_{n}} \geq n^{s} t/2) \xrightarrow[n \to \infty]{} 1.
\end{equation}

Next, given $ n\in\mathbb{N} $, $ \gamma \in \{-, +\} $, and writing
$ x=(x_{1},\ldots,x_{d}) \in \mathbb{Z}^d $, let
$ \partial B_n^\gamma \coloneqq \cbra{x \in \partial B_{n} \vcentcolon x_d = \gamma
  \floor{\log n}} $ and
$ \partial B_n^* \coloneqq \partial B_n \setminus (\partial B_n^+ \cup \partial B_n^-) $.

As in the classical argument of Burton--Keane~\cite{bk89}, if $\abs{\mathcal{G}_n} \geq k$
for some $ k > 0 $, then there are at least $ k $ vertices in $ \partial B_n $ connected
to $ H $ within $ B_n $. Let $ \mathscr{K}_n $ be the set of such vertices
and $ \mathscr{K}_n^\gamma \coloneqq \mathscr{K}_n \cap \partial B_n^\gamma $, for
$ \gamma \in \{-, +, *\} $. Note that if $ \abs{\mathscr{K}_{n}} \geq k $, then there
exists $ \gamma \in \{-, +, *\} $ such that $ \abs{\mathscr{K}_{n}^\gamma} \geq k / 3 $.
Combining this observation with the limit~\eqref{eq:lots-of-good-branching-points} and the
fact that $ \abs{\partial B_n^* }/n^s \xrightarrow[n\to\infty]{} 0 $, we have
\begin{align}
  1
  &= \liminf_{n \to \infty} P_{p,q}
    \paren[\Big]{ \abs[\big]{\bigcup_{\gamma}
    \mathscr{K}_{n}^\gamma} \geq n^st/2}
  \leq \liminf_{n \to \infty} P_{p,q}
    \paren[\Big]{ \bigcup_{\gamma}
    \cbra[\Big]{\abs[\big]{\mathscr{K}_{n}^\gamma} \geq n^st/6}}
  \leq 2 \liminf_{n \to \infty} P_{p,q}
    \paren[\Big]{\abs[\big]{\mathscr{K}_{n}^{-}} \geq n^st/6},
    \label{eq:lots-of-branching-points-exiting}
\end{align}
where the last inequality follows from the union bound and the symmetry of the
events $ \{ \abs{\mathscr{K}_{n}^\gamma} \geq k \} $, $ \gamma \in \{ +,- \} $.

Since $ \dist(\partial B_n^-, H) = \floor{\log n} $, any vertex of $ \mathscr{K}_n^- $ is
$ p $-connected to distance $ \floor{\log n} $ within $ B_n $. Hence, defining the
half-space $\mathbb{H}_h=\{x \in \mathbb{Z} : x_d \ge h\}$, $ h \in \mathbb{Z} $, we get
\begin{equation}
  \liminf_{n\to\infty} P_{p,q}\paren[\Big]{\abs[\big]{\{v\in \partial B_n^- : v\text{ is $p$-connected to distance $\floor{\log n}$ in $\mathbb{H}_{-\floor{\log n}}$}\}} \ge n^s t/6 } >0.
  \label{eq:blabla}
\end{equation}


On the other hand, the result of Barsky, Grimmett and Newman~\cite{bgn91} ensures that
there is no infinite cluster in the half-space $\mathbb{Z}^{d-1} \times \mathbb{Z}_+$ when
$p=q \le p_{c}(d)$.
Then given $\varepsilon>0$, there exists $r>0$ such that
$P_{p,p}(o \text{ is connected to distance $r$ in $\mathbb{H}_{0}$}) < \varepsilon$. By
the $\Gamma$-ergodicity of $P_{p,q}$ and the ergodic theorem, we conclude that
$ P_{p,p}\paren{\abs{\{v\in B_n\cap H : v\text{ is connected to distance $r$ in
      $\mathbb{H}_{0}$}\}} \ge \varepsilon n^s} \xrightarrow[n \to \infty]{} 0 $. In
particular, we have
\begin{equation}
  \lim_{n\to\infty} P_{p,p}\paren[\Big]{\abs[\big]{\{v\in B_n\cap H : v\text{ is connected to distance $\floor{\log n}$ in $\mathbb{H}_{0}$}\}} \ge \varepsilon n^s} = 0.
  \label{eq:blabla2}
\end{equation}
Take $\varepsilon = t/6$. Then, for every fixed $ n $, the probabilities considered in
Equations~\eqref{eq:blabla} and~\eqref{eq:blabla2} are equal. Therefore, these two
equations yield a contradiction and the proof is finished. \qed

\begin{rem}
  Note that the above reasoning does not apply to the case $ s < d-1 $. As a matter of
  fact, the cardinality of any facet of
  $ B_n \coloneqq \{ -n,\ldots,n \}^s \times \{ -\floor{\log n}, \ldots, \floor{\log n}
  \}^{d-s} $ that does not intersect $ H $ consists of roughly $ n^s (\log n)^{d-s-1} $
  vertices. Therefore,~\eqref{eq:lots-of-branching-points-exiting} does not imply anymore
  that, for such a facet $ F $, the proportion of vertices $ v \in F $
  such that $ v \xleftrightarrow{B_n} H $
  stays away from zero with probability larger than some constant: it only gives that this
  proportion is larger than $ {c} / {(\log n)^{d-s-1}} $ with controlled probability.
\end{rem}

\begin{rem}
\label{rem:trivialparam}
The argument as it is written uses the fact that $p$ and $q$ do not belong to $\{0,1\}$, because of finite energy. However, the argument readily adjusts to deal with degenerate values. The less trivial adjustment concerns degenerate values for $q$ but not for $p$, and it is handled by working with $H+(0,\dots,0,1)$ instead of $H$: we then have at least four $p$-edges touching each vertex of the considered hyperplane, which suffices to craft branching points. Considering $q\in\{0,1\}$ makes what happens inside $H$ trivial, but this does not make the result we obtain uninteresting: it is a statement about critical homogeneous Bernoulli percolation. Namely, in a critical homogeneous Bernoulli percolation, fully opening (resp. closing) a whole hyperplane cannot yield infinitely many infinite clusters.
\end{rem}

\subsection{Uniqueness on graphs of uniformly bounded degree}\label{sec:digression}
In this section, we make use of Lemma~\ref{lem:infty-many-cl-property} and the
mass-transport principle to discuss the uniqueness of inhomogeneous Bernoulli percolation
in the case where $G=(V,E)$ is a connected graph and $\sup_{v \in V}\deg(v)=d<\infty$. We
also suppose that there exist a subgroup $ \Gamma \subset \mathrm{Aut} (G) $ and a
connected set $ S \subset V $ such that $ \Gamma|_{S} $ acts transitively on
$ ( S,\mathsf{E}_{S} ) $.

For $p,q\in[0,1]$, let $P_{p,q}$ be the Bernoulli bond percolation measure on $G$, given
by
\begin{equation*}
  P_{p,q}(e \text{ is open})=
  \begin{cases}
    q,& \text{if } e\subset S,\\
    p,& \text{otherwise}.
  \end{cases}
\end{equation*}

Given a vertex $x\in\mathbb{Z}^{d}$, a subset $ S \ni x$ of $\mathbb{Z}^{d}$, and
$p,q\in[0,1]$, define
\begin{equation}\label{eq:exp-value-open-edges-outside-S}
  \phi_{p,q}(x,S)
  \coloneqq \hspace{0.5em}
  q\sum_{\mathclap[]{\{y,z\}\in \Delta S \cap \mathsf{E}_{H}}}
  P_{p,q}(x \xleftrightarrow[]{S} y) \hspace{0.5em}
  + \hspace{0.5em} p\sum_{\mathclap[]{\{y,z\}\in \Delta S\cap\mathsf{E}_{H}^{c}}}
  P_{p,q}(x \xleftrightarrow[]{S} y),
\end{equation}
where $\Delta S \coloneqq \{\{y,z\}\in\mathbb{E}^{d}\vcentcolon y\in S,z\notin S\}$ and
$x \xleftrightarrow[]{S} y$ denotes the event that $x$ is connected to $y$ by an open
path $\{x=x_{1},x_{2},\ldots,x_{k}=y\}\subset S$.

Let $p<d^{-1}$. Then, for every $v \in V$, we have $\phi_{p,p}(v,\{v\}) \leq dp<1$. By a
similar reasoning to the one used in the proof of the exponential decay presented
in~\cite{dct16} (Theorem~1, Item~1), we can conclude that
\begin{equation}\label{eq:exp-decay-bounded-degree}
  P_{p,p}(v \leftrightarrow \partial B_{n}(v)) \leq (dp)^{n}, \quad n\in\mathbb{N},
\end{equation}
where $B_{n}(v)=\{x \in V \vcentcolon \dist(x,v) \leq n\}$.

Thus, in the setting described above, conditions \textbf{i-iii} of
Lemma~\ref{lem:infty-many-cl-property} are clearly satisfied for any $q\in[0,1]$ and
$p<d^{-1}$.

Recall the notation for boundaries of sets defined in~\eqref{eq:boundary-notation}.
Through the rest of this section, to avoid a cumbersome notation, given $A \subset V$, we
shall write, $\Delta A=\Delta_{v}A$ and $\partial_{S}A=\partial_{(S,\mathsf{E}_{S})}A$. We
will also use $\dist_{S}(x,y)$ to denote the graph-theoretical distance between $x$ and
$y$ in $(S,\mathsf{E}_{S})$.

\begin{prop}\label{prop:uniqueness-low-p}
  If $(S,\mathsf{E}_{S})$ is amenable, then, for any $q\in[0,1]$ and $p<1/d^{2}$, there
  exists a sequence $\{R_{n}\}_{n\in\mathbb{N}}$ of subsets of $V$, such that
  \begin{equation*}
    \frac{E_{p,q} \abs{\mathcal{C} ( \Delta R_{n} ; S ) }}{\abs{R_{n} \cap S}}
    \xrightarrow[n\to\infty]{}0.
  \end{equation*}
  Therefore, it follows that $N_{\infty}\in\{0,1\}$ almost surely.
\end{prop}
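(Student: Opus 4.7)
The plan is a contradiction argument via Lemma~\ref{lem:infty-many-cl-property}. I would first verify its hypotheses for $\mathbf{P}=P_{p,q}$: $\Gamma$-invariance follows directly from the construction, insertion-tolerance is automatic, ergodicity under $\Gamma$ is obtained as in Proposition~7.3 of~\cite{lp2016}, and transitivity of $\Gamma|_S$ on $(S,\mathsf{E}_S)$ is part of the standing hypothesis. Condition~\textbf{ii} follows from the exponential decay~\eqref{eq:exp-decay-bounded-degree}: since $p<1/d$, Bernoulli($p$) percolation on $(V,E\setminus\mathsf{E}_S)$ is subcritical, so any infinite cluster of $P_{p,q}$ must touch $S$; moreover the excursions into $V\setminus S$ are subcritical and hence finite, forcing every infinite cluster to meet $S$ infinitely often. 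With the hypotheses verified, if $P_{p,q}(N_\infty=\infty)>0$ then by ergodicity the probability equals $1$, and the lemma yields a constant $c>0$ with $E_{p,q}|\mathcal{C}(\Delta R;S)|\geq c\,|R\cap S|$ for every finite $R\subset V$ meeting $S$. The sequence $\{R_n\}$ produced by the proposition contradicts this bound, forcing $P_{p,q}(N_\infty=\infty)=0$; combined with Theorem~\ref{thm:zero-one-infty}, $N_\infty\in\{0,1\}$ almost surely.

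To construct $\{R_n\}$, I would start with a F\o lner sequence $F_n\subset S$ provided by amenability of $(S,\mathsf{E}_S)$, so that $|\partial_{(S,\mathsf{E}_S)}F_n|/|F_n|\to 0$. The naive choice $R_n=F_n$ is insufficient: the $\Theta(|F_n|)$ vertices of $V\setminus S$ adjacent to $F_n$ each satisfy $P_{p,q}(u\leftrightarrow S)\geq p$, producing only a bounded ratio. I would therefore thicken $F_n$ inside $V$: let $R_n=\{v\in V:\dist(v,F_n)\leq r_n\}$ for a slowly growing sequence $r_n\to\infty$. Then $|R_n\cap S|\geq|F_n|$ and every $u\in\Delta R_n$ satisfies $\dist(u,F_n)=r_n+1$. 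The $S$-part of $\Delta R_n$ is controlled by the F\o lner property applied to the $r_n$-thickening of $F_n$ inside $S$, contributing $o(|F_n|)$ if $r_n$ grows slowly enough.

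For the $V\setminus S$-part, the key estimate is obtained by truncating any witness path of $\{u\leftrightarrow S\}$ at its first hit of $S$: the truncated path lives in $(V\setminus S)\cup\{w\}$, its edges all belong to $E\setminus\mathsf{E}_S$, and each is open independently with probability $p$. Bounding the number of self-avoiding paths of length $k$ from a fixed vertex by $d^k$, one gets
\[
  P_{p,q}(u\leftrightarrow S)\leq\frac{(dp)^{\dist(u,S)}}{1-dp}
\]
for $p<1/d$. Decomposing $\Delta R_n\setminus S$ by distance $h$ to $S$, and noting (by the triangle inequality applied to $F_n$, $u$, and its nearest $S$-vertex) that each such vertex must be anchored at an $S$-vertex sitting in an annular region around $F_n$, with at most $d^h$ vertices sharing each anchor, summation over $h$ collapses into a geometric series $\sum_h(d^2p)^h$, convergent precisely when $p<1/d^2$.

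The main obstacle is the geometric counting step: showing that the total anchor count weighted by the decay factor $(dp)^h$ sums to $o(|F_n|)$ rather than $\Theta(|F_n|)$. This requires tuning $r_n$ so that the relevant annular region in $S$ around $F_n$ stays of size comparable to $|F_n|$ (a consequence of the F\o lner property applied iteratively, provided $r_n$ grows slowly enough), while keeping the geometric series summable. The threshold $p<1/d^2$ arises precisely at the interface between the $d^h$ growth of balls of radius $h$ in $V$ and the $(dp)^h$ decay of connection probabilities to $S$; once this balance is achieved the ratio tends to zero and Lemma~\ref{lem:infty-many-cl-property} delivers the desired contradiction.
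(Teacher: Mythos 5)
Your high-level strategy matches the paper's: verify the hypotheses of Lemma~\ref{lem:infty-many-cl-property} (condition~\textbf{ii} following from subcriticality of $p$-percolation off $S$), establish the exponential decay $P_{p,q}(u \leftrightarrow S) \leq C(dp)^{\dist(u,S)}$ for $p < 1/d$ — which you derive by direct self-avoiding-path counting, whereas the paper invokes~\eqref{eq:exp-decay-bounded-degree} — and build a sequence $\{R_n\}$ by growing a F\o lner set of $S$ into $V$ so that the weighted boundary-to-bulk ratio vanishes. The threshold $p < 1/d^2$ arises for the same reason in both treatments, from balancing the $d^h$ growth of balls against the $(dp)^h$ decay of connections to $S$.

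The gap is in the construction of $R_n$, and you correctly flag it yourself as ``the main obstacle,'' but the sketch you give does not resolve it. You set $R_n = \{v \in V : \dist_V(v,F_n) \le r_n\}$ and claim the lateral part of $\Delta R_n$ is controlled by the F\o lner property applied to the $r_n$-thickening of $F_n$ inside $S$. But $\Delta R_n \cap S = \{w \in S : \dist_V(w,F_n) = r_n+1\}$, and more generally the ``annular anchor region'' you invoke, are defined via the ambient distance $\dist_V$, whereas amenability of $(S,\mathsf{E}_S)$ controls only the intrinsic distance $\dist_S$. Nothing in the standing hypotheses gives a linear comparison between the two: a $V$-geodesic from a far-off $S$-vertex $w$ to $F_n$ can leave $S$ at its first step, travel through $V\setminus S$, and descend into the interior of $F_n$ without ever touching $\partial_S F_n$, so $|\Delta R_n \cap S|$ is not obviously bounded by $|\partial_S F_n| \cdot d^{r_n}$ as your summation requires. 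Moreover, even granting such a comparison, a fixed F\o lner sequence $\{F_n\}$ with a slowly growing $r_n$ requires careful co-tuning; the F\o lner set must really be chosen adaptively so that its boundary ratio is exponentially small in the thickening radius.

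The paper's construction is designed precisely around these two issues, and it is genuinely different from uniform thickening. It takes $R_n = \bigcup_{v \in S_n^*} R_n(v)$ where $R_n(v) = \{x : \dist(x,v) = \dist(x,S) \le n\}$ is the cone of vertices whose nearest $S$-vertex is $v$. This yields $R_n \cap S = S_n^*$ exactly (no ambient-distance pollution of the denominator), and a geometric claim shows that for $v$ in a core F\o lner set $S_{M_n}$, the part of $\Delta R_n$ arising from the cone at $v$ lies entirely at distance $\geq n$ from $S$ — eliminating the lateral boundary except over a thin shell of cones. The ambient-versus-intrinsic discrepancy is absorbed into the quantity $L_n$ (finite, but a priori unbounded relative to $n$), and $S_{M_n}$ is then chosen so that $|\partial_S S_{M_n}|/|S_{M_n}| < d^{-L_n - 2n - 2}$. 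Without the cone structure and this adaptive, exponentially-small F\o lner choice, the thickening argument does not close; to complete your route you would at minimum have to prove a quasi-isometry between $\dist_V|_S$ and $\dist_S$ and then carry out the co-tuning of F\o lner index and radius explicitly.
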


\begin{rem}
  Although we have exponential decay of connectivities for $p<d^{-1}$, it will be clear
  ahead that we must use the more restrictive condition $p<d^{-2}$, in order to compensate
  the growth of the vertices of a ball $B_{n}(v)$, as $n$ increases.
\end{rem}

\begin{example}\label{ex:tree-cartesian-z}
  Let $T=(V_{T},E_{T})$ be an infinite tree whose vertices have uniformly bounded degree and
  $\mathbb{T}=(V_{\mathbb{T}},E_{\mathbb{T}})$ be the cartesian product between $T$ and
  $\mathbb{Z}$, i.e., the graph with vertex set $V_{\mathbb{T}}=V_{T}\times\mathbb{Z}$,
  and edge set
  \begin{equation*}
    E_{\mathbb{T}}=\{\{(u,n),(u,n+1)\}\vcentcolon u\in V_{T}, n\in\mathbb{Z}\}
    \cup \{\{(u,n),(v,n)\}\vcentcolon \{u,v\}\in E_{T}, n\in\mathbb{Z}\}.
  \end{equation*}

  Let $\mathcal{P}=\{v_{j}\vcentcolon j\in\mathbb{Z}\}\subset V_{T}$ be a doubly-infinite
  path in $T$ and $S=\mathcal{P}\times\mathbb{Z}$. In this case, $(S,\mathsf{E}_{S})$ is
  isomorphic to the square lattice $\mathbb{L}^{2}$, and therefore is amenable. Given the
  percolation process $P_{p,q}$, defined above, Proposition~\ref{prop:uniqueness-low-p}
  implies that, for any $q\in[0,1]$ and small values of $p$, there is a.s.\ at most one
  infinite cluster on $\mathbb{T}$, although it is a non-amenable graph.
\end{example}

\begin{proof}[Proof of Proposition~\ref{prop:uniqueness-low-p}]

  First, we construct an appropriate sequence $\{R_{n}\}_{n\in\mathbb{N}}$ of subsets of
  $V$. We proceed as follows:

  For $v \in S$ and $n\in\mathbb{N}$, define
  \begin{equation*}
    R_{n}(v)\coloneqq\{x \in V \vcentcolon \dist(x,v)=\dist(x,S) \leq n\}.
  \end{equation*}
  Note that, for every $w \in \Delta R_{n}(v)$, there exists a vertex $y \in S$ such that
  $\dist(w,y)=\dist(w,S)\leq\dist(w,v)$. Then, if $L_{n}(v,w)$ is the largest distance, in
  the metric of $S$, between $v$ and the vertices $y \in S$ with the above property,
  one can define
  \begin{align*}
    L_{n}(v)&\coloneqq
    \max \{L_{n}(v,w) \vcentcolon v \in S,w \in \Delta R_{n}(v)\}.
  \end{align*}
  Since $\Gamma\subset\operatorname{Aut}(G)$ acts transitively on $(S,\mathsf{E}_{S})$, we
  have $L_{n}(v)=L_{n}\in\mathbb{N}$ for every $v \in S$.

  

  By the amenability of $(S,\mathsf{E}_{S})$, there exists a sequence
  $\{S_{n}\}_{n\in\mathbb{N}}$ of finite subsets of $S$, such that
  $\abs{\partial_{S}S_{n}}/\abs{S_{n}} \to 0$ as $n\to\infty$. In particular, given
  $n\in\mathbb{N}$, let $M_{n}\in\mathbb{N}$ be such that
  \begin{equation}
    \abs{\partial_{S}S_{M_{n}+j}}/\abs{S_{M_{n}+j}}<d^{-L_{n}-2n-2}\label{eq:internal-radius}
  \end{equation}
  for every $j \geq 1$.  Finally, let
  \begin{equation*}
    \begin{aligned}
      U_{n,j}&\coloneqq\{x \in S \vcentcolon \dist_{S}(x,S_{M_{n}})=j\},
      \quad j\in\mathbb{N},\\
      S_{n}^{*}&\coloneqq S_{M_{n}}\cup\sbra[\Big]{\bigcup_{1 \leq j \leq L_{n}} U_{n,j}},\\
      R_{n}&\coloneqq\bigcup_{v \in S_{n}^{*}}R_{n}(v).
    \end{aligned}
  \end{equation*}

  Having defined the sets $R_{n}$, $n\in\mathbb{N}$, we claim that for every
  $v \in S_{M_{n}}$, if $x \in R_{n}(v)$ and $\dist(x,S)<n$, then
  $\Delta\{x\}\subset R_{n}$. 
  To see this, suppose $v \in S_{M_{n}}$ and consider a vertex $x \in R_{n}(v)$,
  such that $\dist(x,S)=m<n$. Then, if $w\in\Delta\{x\}$, one of the following
  alternatives hold:
  \begin{itemize}
  \item $w \in R_{m+1}(v) \subset R_{n}$;
  \item $w\in\Delta R_{m}(v)\cap\Delta R_{n}(v)$. In this case, there is a vertex
    $y \in S$, $y \neq v$, and a path from $w$ to $y$, such that
    \begin{equation*}
      \dist(w,y)=\dist(w,S)<\dist(w,v)\leq \dist(w,x)+\dist(x,v)=1+m \leq n,
    \end{equation*}
    therefore $w\in R_{n}(y)$. If $y \in S_{M_{n}}\subset S_{n}^{*}$, then we trivially
    have $w \in R_{n}$. If $y \in S\setminus S_{M_{n}}$, note that
    $\dist_{S}(y,S_{M_{n}})\leq\dist_{S}(y,v)\leq L_{n}$, which implies that
    $y\in U_{n,j}$ for some $j=1,\ldots,L_{n}$, and consequently
    $y \in S_{n}^{*}$ and $w\in R_{n}$.
  \end{itemize}

  Then, the claim is true and we can conclude that, for every $v \in S_{M_{n}}$, if
  $x \in \Delta R_{n} \cap \Delta R_{n}(v)$, then $\dist(x,S)\geq n$.

  Now, let $q\in[0,1]$ and $p<1/d^{2}$. To find a suitable upper bound for
  $E_{p,q}\mathcal{C}(\Delta R_{n};S)$, we rely on two estimates for
  $\sum_{x \in\Delta R_{n}(v)}P_{p,q}(x \leftrightarrow S)$; both use the fact, that
  since $\sup_{x \in V}\deg(x)=d<\infty$, we have
  $\abs{\{x \in V \vcentcolon \dist(x,v)\leq m\}}\leq d^{m+1}$ for every $v \in V$.

  First, for every $n \in \mathbb{N}$ and $v \in S$,
  \begin{align}
    \sum_{x\in\Delta R_{n}(v)} P_{p,q}(x \leftrightarrow S)
    &\leq d^{n+1}.\label{eq:outer-estimate}
  \end{align}

  Second, for any $n\in\mathbb{N}$ and $v \in S$, the exponential
  decay~\eqref{eq:exp-decay-bounded-degree} implies
  \begin{equation}
    \label{eq:inner-estimate}
    \sum_{\substack{x \in\Delta R_{n}(v)\\\dist(x,S)\geq n}}P_{p,q}(x \leftrightarrow S)
    \leq \sum_{\substack{x \in\Delta R_{n}(v)\\\dist(x,S)\in\{n,n+1\}}} (dp)^{n}
    \leq d^{n+2}(dp)^{n}= d^{2} (d^{2}p)^{n}.
  \end{equation}

  Thus, using the facts that
  \begin{align}
    \Delta R_{n}\cap\Delta R_{n}(v)
    &\subset \Delta R_{n}(v)\label{eq:ext-boundary-comparison-1},\\
    \abs{U_{n,j}}
    &\leq\abs{\partial S_{M_{n}}}d^{j}\label{eq:ext-boundary-comparison-2},
  \end{align}
  along with estimates~\eqref{eq:outer-estimate} and~\eqref{eq:inner-estimate}, we arrive
  at
  \begin{align}
    E_{p,q}\abs{\mathcal{C}(\Delta R_{n};S)}
    &=\sum_{x \in \Delta R_{n}}P_{p,q}(x \leftrightarrow S)\nonumber\\
    &\leq\sum_{\substack{v \in S_{n}^{*}\\x\in\Delta R_{n} \cap \Delta R_{n}(v)}}
    P_{p,q}(x \leftrightarrow S)\nonumber\\
    &\leq\sum_{\substack{v \in S_{M_{n}}\\x\in\Delta R_{n}(v)}}P_{p,q}(x \leftrightarrow S)
    +\sum_{\substack{v \in S_{n}^{*}\setminus S_{M_{n}}\\x\in\Delta R_{n}(v)}}
    P_{p,q}(x \leftrightarrow S)\nonumber\\
    &\leq\sum_{v \in S_{M_{n}}}\sum_{\substack{x \in\Delta R_{n}(v)\\\dist(x,S)\geq n}}
    P_{p,q}(x \leftrightarrow S)+\abs{S_{n}^{*} \setminus S_{M_{n}}} d^{n+1}\nonumber\\
    &\leq \abs{S_{M_{n}}} d^{2} (d^{2}p)^{n}
      +\sum_{j=1}^{L_{n}}\abs{U_{n,j}}d^{n+1}\nonumber\\
    &\leq\abs{S_{M_{n}}} d^{2} (d^{2}p)^{n}+
      \sum_{j=1}^{L_{n}} \abs{\partial_{S}S_{M_{n}}}d^{j}d^{n+1}\nonumber\\
    &\leq \abs{S_{M_{n}}} d^{2} (d^{2}p)^{n}
      +\abs{\partial_{S}S_{M_{n}}}d^{L_{n}+n+2}\nonumber\\
    &\leq \abs{S_{M_{n}}} d^{2} (d^{2}p)^{n}
      +\abs{S_{M_{n}}}d^{-n},\nonumber
  \end{align}
  where the last inequality is a consequence of the choice of
  $M_{n}$~\eqref{eq:internal-radius}. Since $p<d^{-2}$, dividing both sides by
  $\abs{R_{n}\cap S}=\abs{S_{n}^{*}}\geq\abs{S_{M_{n}}}$ and letting $n \to \infty$ yields
  the desired result.
\end{proof}

From Proposition~\ref{prop:uniqueness-low-p}, we conclude that uniqueness holds for every
pair of parameters in $\{(p,q)\vcentcolon p<d^{-2},q>q_{c}(p)\}$. We now claim that this
set can be extended with the aid of the mass-transport principle, described in
Section~\ref{sec:p-supercritical}. As a matter of fact, one can note that
Theorem~\ref{thm:mtp} can be reformulated in a more general setting, where
\begin{itemize}
\item $\mathbb{L}^{d}$ is replaced by an infinite and connected graph, $G=(V,E)$, of
  uniformly bounded degree;
\item the sublattice $H$ and the translations parallel to it are replaced by a subgraph
  $(S,\mathsf{E}_{S})$ and a subgroup $\Gamma \subset \mathrm{Aut}(G)$, such that
  $\Gamma|_{S}$ acts transitively on $(S,\mathsf{E}_{S})$.
\end{itemize}
Also, one should observe that, in the proof of Proposition~\ref{prop:clusters-contained},
the key fact that allowed us to show that every infinite cluster of $ \omega_{p,q,p} $
contains an infinite cluster of $ \omega_{p,0,0} $ is that, whenever an infinite cluster
exists, it intersects $H$ at an infinite number of vertices, in both percolation
configurations. Thus, letting $p_{c}(G)$ be the threshold for homogeneous Bernoulli
percolation on $G$, we can establish the following result:

\begin{prop}\label{prop:uniqueness-higher-p}
  If $(S,\mathsf{E}_{S})$ is amenable, $0 < p_{c}(S,\mathsf{E}_{S}) < 1$, and there exist
  $p\in(0,1)$ and $q > q_{c}(d^{-2})$ such that
  \begin{gather*}
    P_{p,q} (v \leftrightarrow \infty) > 0,\\
    P_{p,q} ( \abs{\mathcal{C} ( v )} = \infty,\abs{\mathcal{C} (v) \cap S} < \infty )
    = 0 \quad \forall v \in V,
  \end{gather*}
  then $P_{p,q}(N_{\infty}=1)=1$.
\end{prop}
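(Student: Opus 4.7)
The strategy is to adapt the mass-transport argument of Proposition~\ref{prop:clusters-contained} to the present setting, by coupling the target process $\omega_2 \sim P_{p,q}$ with a baseline $\omega_1$ stochastically dominated by $\omega_2$ that has a unique infinite cluster almost surely and whose infinite clusters meet $S$ at infinitely many vertices. The generalized mass-transport argument then forces every infinite cluster of $\omega_2$ to contain the unique infinite cluster of $\omega_1$, giving uniqueness.

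When $p < d^{-2}$, Proposition~\ref{prop:uniqueness-low-p} combined with the hypothesis $P_{p,q}(v\leftrightarrow\infty)>0$ already yields the conclusion. For $p \ge d^{-2}$, I would pick $p_0 \in (0, d^{-2})$ and $q_0 \in (q_c(p_0), q]$, with $p_0$ taken close enough to $d^{-2}$ that such a $q_0$ exists: this is where the hypothesis $q > q_c(d^{-2})$ enters, through an approximation of the supercritical region from the left at $p = d^{-2}$. Define $\omega_1 \sim P_{p_0, q_0}$ and $\omega_2 \sim P_{p,q}$ coupled via a common family of i.i.d.\ uniform edge-variables, so that $\omega_1 \subset \omega_2$. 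By Proposition~\ref{prop:uniqueness-low-p}, $\omega_1$ has a unique infinite cluster almost surely. Moreover, one can verify that every infinite cluster of $\omega_1$ meets $S$ at infinitely many vertices: since $p_0 < d^{-1}$, the exponential decay~\eqref{eq:exp-decay-bounded-degree} applied to the $p_0$-Bernoulli process $\omega_1|_{E\setminus\mathsf{E}_S}$ on the graph $(V, E\setminus\mathsf{E}_S)$ of maximum degree at most $d$ ensures that its clusters are almost surely finite; hence an infinite $\omega_1$-cluster meeting $S$ in only finitely many vertices would decompose, by path-segmentation at $\mathsf{E}_S$-edges, as a finite union of finite pieces, a contradiction. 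The analogous property for $\omega_2$ is a hypothesis of the proposition.

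For the transport step, I would first observe that Theorem~\ref{thm:mtp} extends verbatim to any $\Gamma$-invariant percolation process on $G$ and any $\Gamma$-invariant nonnegative mass $m\colon S\times S\times\Omega\to\mathbb{R}$, its proof relying only on the transitivity of $\Gamma|_S$ on $(S,\mathsf{E}_S)$. I would then follow the proof of Proposition~\ref{prop:clusters-contained} line by line with $H$ replaced by $S$: define the distance $D_1(u)$ and the events $A(u)$, $A_{u,v}$ and $S(w)$ for $u,v,w\in S$; split the analysis into the cases $\{D_1(u)>1\}$ and $\{D_1(u)=1\}$; use the finite-energy property to transport configurations in $\{\abs{\mathcal{C}(u,\omega_2)}=\infty\}\cap A(u)$ onto events on which the relevant mass-transport sum is infinite; and apply the generalized principle to conclude that each such probability vanishes. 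The outcome is that every infinite cluster of $\omega_2$ contains the unique infinite cluster of $\omega_1$, so $N_\infty=1$ under $P_{p,q}$.

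The main obstacle is the baseline setup in the regime $p\ge d^{-2}$, namely securing $p_0<d^{-2}$ with $P_{p_0,q_0}$ still supercritical for some $q_0\le q$; while the mass-transport step is essentially a verbatim translation of Proposition~\ref{prop:clusters-contained}, this first step requires a mild one-sided control of $q_c$ near $p=d^{-2}$, which one extracts from the hypothesis $q>q_c(d^{-2})$.
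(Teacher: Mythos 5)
Your strategy matches the paper's: reduce to a small-$p'$ baseline process covered by Proposition~\ref{prop:uniqueness-low-p}, check that infinite clusters of both processes hit $S$ infinitely, and transfer uniqueness upward through the monotone coupling and the mass-transport argument of Proposition~\ref{prop:clusters-contained} with $S$ in place of $H$. The one place you diverge is in the baseline: the paper keeps the second parameter equal to $q$ (coupling $P_{p',q}$ with $P_{p,q}$ for some $p'<d^{-2}$), which dispenses with your auxiliary $q_0$ and uses $q>q_c(d^{-2})$ in exactly one place, to obtain $p'<d^{-2}$ with $P_{p',q}(v\leftrightarrow\infty)>0$; your version with $q_0\in(q_c(p_0),q]$ works but adds an unnecessary degree of freedom. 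Also, for the $S$-intersection property of the baseline, you argue via exponential decay of $p_0$-connectivities off $S$ and a path-decomposition, whereas the paper combines the finite-energy property with the observation that $P_{p',p'}$ is subcritical (since $p'<d^{-2}<p_c(G)$); these are equivalent in substance. Both your write-up and the paper's treat the one-sided continuity step at $p=d^{-2}$ rather briskly; you flag it as the main obstacle, which is a fair assessment, since the monotonicity of $q_c$ alone does not rule out a jump of $q_c$ as $p\downarrow$ there.
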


\begin{proof}
  Since $\sup_{x \in V}\deg(x)=d<\infty$ and $q_{c}$ is non-increasing, if
  $q > q_{c}(d^{-2})$, there exists $p' < d^{-2}$ such that
  \begin{align}
    P_{p',q} (v \leftrightarrow \infty) > 0,\label{eq:p'q-supercritical}\\
    P_{p',p'} (v \leftrightarrow \infty) = 0.\label{eq:p'p'-subcritical}
  \end{align}
  By the amenability of $(S,\mathsf{E}_{S})$, Proposition~\ref{prop:uniqueness-low-p}
  and~\eqref{eq:p'q-supercritical} imply that $P_{p',q}(N_{\infty} = 1)=1$. Additionally,
  the finite energy property along with~\eqref{eq:p'p'-subcritical} imply that
  \begin{equation*}
    P_{p',q} ( \abs{\mathcal{C} ( v )} = \infty,\abs{\mathcal{C} (v) \cap S} < \infty )
    = 0 \quad \forall v \in V.
  \end{equation*}
  By hypothesis, we also have
  $P_{p,q} (\abs{\mathcal{C}(v)} = \infty,\abs{\mathcal{C} (v) \cap S} < \infty ) = 0$ for
  every $v \in V$.

  Now, let $ P $ be the probability associated with the family
  $ \cbra*{U (e) \vcentcolon e \in E} $ of i.i.d.\ random variables having uniform
  distribution in $ \sbra*{0,1} $ and, for $ p,q \in [0,1] $, let
  $ \omega_{p,q} \in {\{ 0,1 \}}^{E} $ be the bond percolation process on $G$, given by
  \begin{align*}
    \omega_{p,q} (e)
    & \coloneqq
      \begin{cases}
        \mathbf{1}_{\{ U (e) \leq p \}}
        & \text{if } e \in E \setminus \mathsf{E}_{S},\\
        \mathbf{1}_{\{ U (e) \leq q \}} & \text{if } e \in \mathsf{E}_{S}.
      \end{cases}
  \end{align*}

  By the same reasoning used in the proof of Proposition~\ref{prop:clusters-contained}, we
  conclude that every infinite cluster of $ \omega_{p,q} $ contains an infinite cluster of
  $ \omega_{p',q} $ almost surely. Since $P_{p',q}(N_{\infty} = 1)=1$, it follows that
  $P_{p,q}(N_{\infty} = 1)=1$.
\end{proof}

\begin{cor}
  If $(S,\mathsf{E}_{S})$ is amenable and $0 < p_{c}(S,\mathsf{E}_{S}) < 1$, then
  $N_{\infty}=1$ almost surely for every $p\in(0,p_{c}(G))$ and
  $q > q_{c}(d^{-2}) \vee q_{c}(p)$.
\end{cor}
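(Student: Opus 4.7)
The plan is to apply Proposition~\ref{prop:uniqueness-higher-p} directly, so I would verify its three hypotheses for any pair $(p,q)$ with $p \in (0, p_{c}(G))$ and $q > q_{c}(d^{-2}) \vee q_{c}(p)$. The amenability of $(S,\mathsf{E}_{S})$ and the bound $0 < p_{c}(S,\mathsf{E}_{S}) < 1$ come as assumptions of the corollary; the condition $q > q_{c}(d^{-2})$ is immediate from $q > q_{c}(d^{-2}) \vee q_{c}(p)$; and the supercriticality statement $P_{p,q}(o \leftrightarrow \infty) > 0$ follows at once from $q > q_{c}(p)$ together with the definition of the critical parameter function.

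The only non-trivial check is the ``uses $S$'' condition
\[
P_{p,q}\paren[\big]{\abs{\mathcal{C}(v)} = \infty,\; \abs{\mathcal{C}(v) \cap S} < \infty} = 0 \quad \forall v \in V.
\]
I would argue this by contradiction. Assume that the event above has positive probability and, on that event, set $F \coloneqq \mathcal{C}(v) \cap S$, which is then finite. The infinite set $\mathcal{C}(v) \setminus F$ lies entirely in $V \setminus S$, and its connected components in the subgraph of $\omega_{p,q}$ obtained by deleting $F$ together with its incident edges are connected using only edges of $E[V \setminus S] \subset E \setminus \mathsf{E}_{S}$, which are open independently with probability $p$ under $P_{p,q}$. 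Each such component must attach to $F$ by at least one open edge of $\omega_{p,q}$, for otherwise it would not belong to $\mathcal{C}(v)$. Since the bounded-degree hypothesis gives at most $d\abs{F}$ edges of $G$ incident to $F$, the decomposition has only finitely many components, so at least one of them must be infinite.

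This infinite component is then an infinite cluster in Bernoulli$(p)$ bond percolation on the induced subgraph $G[V \setminus S]$. Coupling this percolation with the standard Bernoulli$(p)$ percolation on the full graph $G$ (opening edges outside $E[V\setminus S]$ independently with probability $p$), this infinite cluster persists as an infinite cluster in $G$, contradicting $p < p_{c}(G)$. Once this condition is established, Proposition~\ref{prop:uniqueness-higher-p} yields $P_{p,q}(N_{\infty} = 1) = 1$. The main obstacle is really just the combinatorial step bounding the number of components of $\mathcal{C}(v) \setminus F$, but it is light; the substantive work has already been carried out in Propositions~\ref{prop:uniqueness-low-p} and~\ref{prop:uniqueness-higher-p}.
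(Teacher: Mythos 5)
Your proposal is correct and follows the same route as the paper: reduce to Proposition~\ref{prop:uniqueness-higher-p} after checking that $P_{p,q}(\abs{\mathcal{C}(v)}=\infty,\abs{\mathcal{C}(v)\cap S}<\infty)=0$ whenever $p<p_c(G)$. The paper leaves that last observation implicit, whereas you supply the (correct) justification using bounded degree, the finiteness of $\mathcal{C}(v)\cap S$, and monotonicity with Bernoulli$(p)$ percolation on $G$.
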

\begin{proof}
  It suffices to observe that, in this case,
  $P_{p,q} ( \abs{\mathcal{C} ( v )} = \infty,\abs{\mathcal{C} (v) \cap S} < \infty ) = 0$
  for every $v \in V$, therefore Proposition~\ref{prop:uniqueness-higher-p} holds.
\end{proof}



\section{Approximation on slabs}\label{sec:approx-slabs}

We accomplish the proof of Theorem~\ref{thm:pq-approx-slabs} using the ideas
developed by Grimmett and Marstrand in~\cite{gm90}. Nevertheless, they must be
adapted to the inhomogeneous setting, and we do so in the sequel. Every result stated
in the next section has an analogous counterpart in~\cite{gm90}, and this
correspondence will be indicated. Proofs that do not differ from their original
counterpart are omitted. We shall also highlight the relevant aspects that are
particular to our case. From now on, we denote
$ \theta (p,q) \coloneqq P_{p,q} (o \leftrightarrow \infty) $.

\subsection{Technical lemmas}\label{sec:technical-lemmas}

Recall that $ H = \mathbb{Z}^{s} \times \cbra*{0}^{d-s} $ and that $ \Delta_{v} S $ and
$ \Delta_{e} S $ denote the external vertex and edge boundaries of a set
$ S \subset \mathbb{Z}^{d} $, respectively, and $\partial S$ denotes internal vertex
boundary of $ S $. For $ m \in \mathbb{N} $, let
$ B_{m} \coloneqq \cbra*{-m,\ldots,m}^{d} $ and $ B_{m}^{H} \coloneqq B_{m} \cap H $.

Given $ \alpha,\beta > 0 $ and $ n \in \mathbb{N} $, let
\begin{align*}
  S_{n}^{\alpha,\beta}
  & \coloneqq \cbra*{x \in H \vcentcolon \beta n + 1 \leq
    \norm*{x}_{\infty} \leq \beta n + \alpha n},
\end{align*}
and, for $ m \in \mathbb{N} $ with $ \beta n > m $, consider the random set
\begin{align}\label{eq:sec:approx-slabs:1}
  U_{n}^{\alpha,\beta}
  & \coloneqq \cbra[\bigg]{x \in \Delta_{v} S_{n}^{\alpha,\beta}
    \vcentcolon x
    \xleftrightarrow{B_{\beta n + \alpha n} \setminus S_{n}^{\alpha,\beta}} B_{m}^{H}}.
\end{align}
Our first task is to show that, in the regime $ p < p_{c} (d) < q < p_{c} (s) $, if
the cluster of $ B_{m}^{H} $ is infinite for some $ m \in \mathbb{N} $, then it is
unlikely that $ U_{n}^{\alpha,\beta} $ consists of just a few vertices, as
$ n \to \infty $. We work with definition \eqref{eq:sec:approx-slabs:1} because,
unlike the homogeneous percolation process, since we are considering
$ \theta(p,q) > 0 $ and $ p < p_{c}(d) $, when we search for vertices that are
connected to $ B_{m}^{H} $ and distant from the origin, we are compelled to look for
candidates near the sublattice $ H $. The following result is the equivalent of
Lemma~3 of~\cite{gm90}. Its proof is carried out anew due to the definition of
$ U_{n}^{\alpha, \beta} $.

\begin{lem}\label{lem:gm1}
  For any $ k,m \in \mathbb{N} $, $ \alpha,\beta > 0 $ and
  $ p < p_{c} (d) < q < p_{c} (s) $, we have
  \begin{align*}
    P_{p,q} \paren[\big]{ \abs{U_{n}^{\alpha,\beta}} \leq k,
    B_{m}^{H} \leftrightarrow \infty}
    & \xrightarrow[n]{} 0.
  \end{align*}
  \begin{proof}
    Under the conditions of the lemma we have
    \begin{align*}
      P_{p,q}\paren[\big]{\abs{U_{n}^{\alpha,\beta}} \leq k,
      B_{m}^{H} \leftrightarrow \infty}
      \leq
      P_{p,q}\paren[\big]{\abs{U_{n}^{\alpha,\beta}} = 0,
      B_{m}^{H} \leftrightarrow \infty}
      +
      P_{p,q}\paren[\big]{1 \leq \abs{U_{n}^{\alpha,\beta}} \leq k}.
    \end{align*}
    Hence, the result is proved if we show that the two probabilities on the
    right-hand side of the inequality above go to zero as $ n \to \infty $. To see
    this, note that since $ p < p_{c} (d) $, the exponential decay of the radius of
    the open cluster \cite{ab87,dct16,men86} implies that there is a constant
    $ c_{p} > 0 $ such that
    \begin{align}\label{eq:lem:gm1:1}
      P_{p,q}\paren[\big]{\abs{U_{n}^{\alpha,\beta}} = 0,
      B_{m}^{H} \leftrightarrow \infty}
      \leq
      P_{p,q}\paren[\Big]{B_{\beta n}
      \xleftrightarrow{B_{\beta n + \alpha n} \setminus S_{n}^{\alpha,\beta}}
      \partial B_{\beta n + \alpha n}}
      \leq
      \abs*{\partial B_{\beta n}} e^{-c_{p} \alpha n}.
    \end{align}
    Also, since the random variable $ \abs{U_{n}^{\alpha,\beta}} $ does not depend on
    the states of the edges in $ \Delta_{e} S_{n}^{\alpha,\beta} $, given
    $ j \in \cbra*{1,\ldots,k} $, we have
    \begin{align*}
      P_{p,q} \paren[\big]{\abs{U_{n}^{\alpha,\beta}} = j} ( 1-q )^{k}
      & \leq
        P_{p,q} \paren[\big]{\abs{U_{n}^{\alpha,\beta}} = j} ( 1-q )^{j} \\
      & \leq
        P_{p,q} \paren[\big]{\abs{U_{n}^{\alpha,\beta}} = j,
        \Delta_{e} U_{n}^{\alpha,\beta} \cap \Delta_{e} S_{n}^{\alpha,\beta}
        \text{ closed}} \\
      & \hspace{-7em} \leq
        P_{p,q} \paren[\Big]{\cbra*{B_{m}^{H} \leftrightarrow \partial B_{\beta n}, \;
        B_{m}^{H} \nleftrightarrow \partial B_{\beta n + \alpha n}} \cup
        \cbra[\Big]{B_{m}^{H}
        \xleftrightarrow{B_{\beta n + \alpha n} \setminus S_{n}^{\alpha,\beta}}
        \partial B_{\beta n + \alpha n} }} \\
      & \leq
        P_{p,q} \paren[\big]{B_{m}^{H} \leftrightarrow \partial B_{\beta n},
        \abs{\mathcal{C} ( B_{m}^{H} )} < \infty} +
        \abs{\partial B_{\beta n}} e^{-c_{p} \alpha n},
    \end{align*}
    where $ \mathcal{C} ( B_{m}^{H} ) $ denotes the open cluster of $ B_{m}^{H} $.
    Consequently, it follows that
    \begin{align}\label{eq:lem:gm1:2}
      P_{p,q}\paren[\big]{1 \leq \abs{U_{n}^{\alpha,\beta}} \leq k}
      & =
        ( 1-q )^{-k} \sum_{j=1}^{k}  ( 1-q )^{k}
        P_{p,q} \paren[\big]{\abs{U_{n}^{\alpha,\beta}} = j} \nonumber \\
      & \hspace{-3.5em} \leq ( 1-q )^{-k} k
        \sbra[\Big]{P_{p,q} \paren[\big]{B_{m}^{H} \leftrightarrow \partial B_{\beta n},
        \abs{\mathcal{C} ( B_{m}^{H} )} < \infty} +
        \abs{\partial B_{\beta n}} e^{-c_{p} \alpha n}}.
    \end{align}
    Thus, the proof is completed by observing that the right-hand sides of
    \eqref{eq:lem:gm1:1} and \eqref{eq:lem:gm1:2} go to zero as $ n \to \infty $.
  \end{proof}
\end{lem}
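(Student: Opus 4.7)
The plan is to bound the probability by splitting according to whether $U_n^{\alpha,\beta}$ is empty, writing
\[
P_{p,q}\paren[\big]{\abs{U_n^{\alpha,\beta}} \leq k,\ B_m^H \leftrightarrow \infty} \leq P_{p,q}\paren[\big]{\abs{U_n^{\alpha,\beta}} = 0,\ B_m^H \leftrightarrow \infty} + P_{p,q}\paren[\big]{1 \leq \abs{U_n^{\alpha,\beta}} \leq k},
\]
and showing each summand vanishes as $n \to \infty$. The driving tool for both is the exponential decay of the subcritical one-arm event for homogeneous $p$-percolation on $\mathbb{E}^d \setminus \mathsf{E}_H$, which is available since $p < p_c(d)$; the hypothesis $q < p_c(s)$ is only there to keep the regime $\theta(p,q)>0$ nontrivial.

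For the first summand, the key geometric observation is that if $\abs{U_n^{\alpha,\beta}} = 0$ and $B_m^H \leftrightarrow \infty$, then some open path from $B_m^H$ to $\partial B_{\beta n + \alpha n}$, contained in $B_{\beta n + \alpha n}$, must avoid $S_n^{\alpha,\beta}$ entirely: were it to enter $S_n^{\alpha,\beta}$, its vertex just before the first entry would sit in $\Delta_v S_n^{\alpha,\beta}$ and be connected to $B_m^H$ within $B_{\beta n + \alpha n} \setminus S_n^{\alpha,\beta}$, forcing it into $U_n^{\alpha,\beta}$. Since $S_n^{\alpha,\beta}$ consists exactly of the $H$-vertices of the annular shell $\{\beta n < \norm{x}_\infty \leq \beta n + \alpha n\}$, the portion of such a path inside the shell lies in $\mathbb{Z}^d \setminus H$ and uses only edges of $\mathbb{E}^d \setminus \mathsf{E}_H$. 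A union bound over the starting vertex in $\partial B_{\beta n}$, together with homogeneous $p$-decay, yields a bound of order $\abs{\partial B_{\beta n}} e^{-c_p \alpha n}$.

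For the second summand, I would decompose over the value $j \in \{1,\dots,k\}$ of $\abs{U_n^{\alpha,\beta}}$ and exploit that this random variable is measurable with respect to edges contained in $B_{\beta n + \alpha n} \setminus S_n^{\alpha,\beta}$, hence independent of the states of the edges in $\Delta_e S_n^{\alpha,\beta}$. On $\{\abs{U_n^{\alpha,\beta}} = j\}$, closing every edge of $\Delta_e U_n^{\alpha,\beta} \cap \Delta_e S_n^{\alpha,\beta}$---an event of probability bounded below by $(1-q)^{Cj}$ by a degree bound---severs every passage of the $B_m^H$-cluster into $S_n^{\alpha,\beta}$, so that in the modified configuration any path from $B_m^H$ to $\partial B_{\beta n + \alpha n}$ again avoids $S_n^{\alpha,\beta}$. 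The surgery event therefore lies inside $\{B_m^H \leftrightarrow \partial B_{\beta n},\ \abs{\mathcal{C}(B_m^H)} < \infty\}$ (since an infinite cluster necessarily reaches every $\partial B_N$) unioned with the annulus-crossing event of the previous paragraph, leading to a bound of the form
\[
P_{p,q}\paren[\big]{1 \leq \abs{U_n^{\alpha,\beta}} \leq k} \leq C_k \sbra[\big]{P_{p,q}\paren[\big]{B_m^H \leftrightarrow \partial B_{\beta n},\ \abs{\mathcal{C}(B_m^H)} < \infty} + \abs{\partial B_{\beta n}} e^{-c_p \alpha n}}.
\]

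To conclude, the annulus-crossing term is exponentially small in $n$, and the cluster term vanishes by continuity of measure: the events $\{B_m^H \leftrightarrow \partial B_{\beta n}\}$ decrease in $n$ to $\{B_m^H \leftrightarrow \infty\}$, so their intersection with $\{\abs{\mathcal{C}(B_m^H)} < \infty\}$ has probability tending to $0$. The main subtlety is the geometric claim in the second paragraph: one must verify that ``avoiding $S_n^{\alpha,\beta}$'' forces any shell-crossing path to use only edges of $\mathbb{E}^d \setminus \mathsf{E}_H$ within the shell, which is precisely what lets us replace the potentially supercritical $q$-percolation by subcritical $p$-percolation and extract an exponential rate uniform in $q$.
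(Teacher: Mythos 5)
Your proposal follows essentially the same route as the paper's proof: the same split into $\{\abs{U_n^{\alpha,\beta}}=0,\, B_m^H\leftrightarrow\infty\}$ and $\{1\le\abs{U_n^{\alpha,\beta}}\le k\}$, the same geometric observation that avoiding $S_n^{\alpha,\beta}$ forces the shell-crossing to use only $\mathbb{E}^d\setminus\mathsf{E}_H$ edges (giving a uniform-in-$q$ exponential rate), the same surgery of closing $\Delta_e U_n^{\alpha,\beta}\cap\Delta_e S_n^{\alpha,\beta}$ while exploiting independence from the edges that determine $U_n^{\alpha,\beta}$, and the same conclusion via the annulus bound plus continuity of measure. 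The only small point worth noting is that where the paper writes the closure cost as $(1-q)^j$, your $(1-q)^{Cj}$ with a degree-bound constant is actually the more careful statement, since a vertex of $U_n^{\alpha,\beta}$ lying in $H$ (a corner of the shell) can have more than one neighbor in $S_n^{\alpha,\beta}$; this does not affect the conclusion, as only some $j$-dependent power of $(1-q)$ is needed.
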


The next result we state is the equivalent of Lemma~4 of~\cite{gm90}. It says that if
$ B_{m}^{H} $ percolates, then for sufficiently large $ n $, there is always a
portion of $ \Delta_{v} S_{n}^{\alpha,\beta} $ where we can find as many sites
connected to $ B_{m}^{H} $ as we like with positive probability, which goes to one as
$ m \to \infty $.

Define the sets
\begin{align*}
  F_{n}^{\alpha,\beta}
  & \coloneqq
    \sbra*{\beta n + 1, \beta n + \alpha n} \times \sbra*{0, \beta n + \alpha n}^{s-1}
    \times \cbra*{0}^{d-s}, \\
  T_{n}^{\alpha,\beta}
  & \coloneqq
    \Delta_{v} F_{n}^{\alpha,\beta} \cap B_{\beta n + \alpha n}, \\
  V_{n}^{\alpha,\beta}
  & \coloneqq \cbra*{x \in T_{n}^{\alpha,\beta}
    \vcentcolon x
    \xleftrightarrow{B_{\beta n + \alpha n} \setminus F_{n}^{\alpha,\beta}} B_{m}^{H}}.
\end{align*}

\begin{lem}\label{lem:gm2}
  For any $ k,m \in \mathbb{N} $, $ \alpha,\beta > 0 $ and
  $ p < p_{c} (d) < q < p_{c} (s) $, we have
  \begin{align*}
    \liminf_{n} P_{p,q} \paren[\big]{\abs{V_{n}^{\alpha,\beta}} \geq k}
    & \geq 1 - P_{p,q} \paren[\big]{B_{m}^{H} \nleftrightarrow \infty}^{1/s2^{s}}.
  \end{align*}
\end{lem}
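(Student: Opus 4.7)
The plan is to combine the FKG inequality with Lemma~\ref{lem:gm1}, exploiting a symmetric decomposition of the annulus $S_n^{\alpha,\beta}$ into $s\cdot 2^s$ congruent ``faces'' obtained as images of $F_n^{\alpha,\beta}$ under lattice symmetries. Concretely, I would introduce the collection $\Sigma$ of $s\cdot 2^s$ symmetries of $\mathbb{Z}^d$ obtained by choosing an index $i\in\{1,\ldots,s\}$ and a sign $\epsilon\in\{\pm\}$ specifying which of the first $s$ coordinates is $\ell^\infty$-extremal (for $F_n^{\alpha,\beta}$ itself, $i=1$ and $\epsilon=+$), together with $2^{s-1}$ independent sign choices on the remaining $s-1$ sublattice coordinates. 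Each $\sigma\in\Sigma$ fixes $H$, $B_{\beta n+\alpha n}$, and $B_m^H$, preserves $P_{p,q}$, and the images $\{\sigma(F_n^{\alpha,\beta})\}_{\sigma\in\Sigma}$ cover $S_n^{\alpha,\beta}$. For each $\sigma$, I would set
\[
  V_n^\sigma \coloneqq
  \cbra[\big]{x \in \sigma(T_n^{\alpha,\beta}) \vcentcolon x
    \xleftrightarrow{B_{\beta n+\alpha n}\setminus\sigma(F_n^{\alpha,\beta})} B_m^H},
\]
so that $|V_n^\sigma|$ has the same distribution as $|V_n^{\alpha,\beta}|$ under $P_{p,q}$.

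The key geometric input to verify next is the inclusion $U_n^{\alpha,\beta}\subseteq\bigcup_{\sigma\in\Sigma} V_n^\sigma$. If $x\in U_n^{\alpha,\beta}$, then $x\in B_{\beta n+\alpha n}$ (forced by the witnessing open path) and $x$ is adjacent to some $y\in S_n^{\alpha,\beta}$; pick $\sigma\in\Sigma$ with $y\in\sigma(F_n^{\alpha,\beta})$, so that $x\in\Delta_v\sigma(F_n^{\alpha,\beta})\cap B_{\beta n+\alpha n}=\sigma(T_n^{\alpha,\beta})$. Since $\sigma(F_n^{\alpha,\beta})\subseteq S_n^{\alpha,\beta}$, the same witnessing path also avoids $\sigma(F_n^{\alpha,\beta})$, so $x\in V_n^\sigma$. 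In particular, if $|V_n^\sigma|<k$ for every $\sigma\in\Sigma$, then $|U_n^{\alpha,\beta}|<s\cdot 2^s\cdot k$.

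Each event $\{|V_n^\sigma|<k\}$ is decreasing in $\omega$, so the FKG inequality for the product measure $P_{p,q}$ together with the distributional identities above yields
\[
  P_{p,q}(|V_n^{\alpha,\beta}|<k)^{s 2^s}
  = \prod_{\sigma\in\Sigma} P_{p,q}(|V_n^\sigma|<k)
  \leq P_{p,q}\paren[\Big]{\bigcap_{\sigma\in\Sigma}\{|V_n^\sigma|<k\}}
  \leq P_{p,q}(|U_n^{\alpha,\beta}|<s 2^s k).
\]
Splitting the right-hand side over $\{B_m^H\leftrightarrow\infty\}$ and its complement, Lemma~\ref{lem:gm1} (applied with $s 2^s k-1$ in place of $k$) kills the first piece as $n\to\infty$, while the second is bounded by $P_{p,q}(B_m^H\nleftrightarrow\infty)$. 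Taking limsup, extracting $(s 2^s)$-th roots, and passing to complements produces the claimed inequality. The most delicate point is the geometric bookkeeping behind the covering $\bigcup_\sigma\sigma(F_n^{\alpha,\beta})\supseteq S_n^{\alpha,\beta}$ and the inclusion $U_n^{\alpha,\beta}\subseteq\bigcup_\sigma V_n^\sigma$, which must account for vertices on every inner and outer corner of the annulus; this ultimately reduces to a direct check using the hyperoctahedral symmetry of the $\ell^\infty$-ball in the first $s$ coordinates.
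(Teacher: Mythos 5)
Your proof is correct and follows exactly the route the paper indicates: cover the annulus $S_n^{\alpha,\beta}$ by $s\cdot 2^s$ lattice-symmetric copies of $F_n^{\alpha,\beta}$ (symmetries that fix $H$ setwise and therefore preserve $P_{p,q}$), apply FKG to the decreasing events $\{|V_n^\sigma|<k\}$, deduce $P_{p,q}(|V_n^{\alpha,\beta}|<k)^{s2^s}\le P_{p,q}(|U_n^{\alpha,\beta}|<s2^s k)$ from the inclusion $U_n^{\alpha,\beta}\subseteq\bigcup_\sigma V_n^\sigma$, and then dispose of the right side via Lemma~\ref{lem:gm1} and the trivial bound on $\{B_m^H\nleftrightarrow\infty\}$. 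This is precisely the FKG-plus-Lemma~\ref{lem:gm1} argument the paper describes as analogous to Lemma~4 of Grimmett--Marstrand, so the approach is essentially the same as the paper's.
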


The proof of this result consists in an application of the FKG-inequality~\cite{fkg71}
together with Lemma~\ref{lem:gm1}. Since it is analogous to its counterpart
in~\cite{gm90}, we shall omit it.

Now, we go one step further and show that, if the origin percolates for some
$ p < p_{c} (d) < q < p_{c} (s) $, then for sufficiently large $ n $ and $ m $, it is
very likely to have $ B_{m}^{H} $ connected to some translate $ x + B_{m}^{H} $ which
is contained in $ F_{n}^{\alpha,\beta} $ and whose edges are all open. That is, we
shall establish the equivalent of Lemma~5 of~\cite{gm90}. Although the proof of our
result is carried out similarly as its counterpart, one of its steps uses a more
general argument. This is done to avoid the verification, at a certain point of the
proof, that $ 2m +1 $ divides both $ \alpha n + 1 $ and $ \alpha n + \beta n + 1 $,
for some $ \alpha,\beta > 0 $ and $ m,n \in \mathbb{N} $. For the sake of clarity, we
will present the full proof.

For $ m \in \mathbb{N} $ and $ x \in H $, we say that $ x + B_{m}^{H} $ is an
$ m $\emph{\textbf{-seed}} if every edge in $ x + B_{m}^{H} $ is open. Thus, we
define, for $ \alpha n > 2m + 1 $,
\begin{align*}
  K_{m,n}^{\alpha,\beta}
  & \coloneqq
    \cbra*{x \in T_{n}^{\alpha,\beta} \vcentcolon \exists y \in F_{n}^{\alpha,\beta},
    \{ x,y \} \in \mathbb{E}^{d},
    \omega ( \{ x,y \} ) = 1, y \text{ is in an $ m $-seed in } F_{n}^{\alpha,\beta}}.
\end{align*}

The strategy here is the following: provided that we can find any large number of
vertices in $ \abs{V_{n}^{\alpha,\beta}}$ with probability as high as we need, we
additionally require that some fixed number of these vertices are connected to a seed
in $ F_{n}^{\alpha,\beta} $. Using the structure of $ \mathbb{Z}^{d} $ we can ensure
that these candidates are far away from each other in such a way that all the
possible seeds are mutually disjoint. Hence, if we have many such candidates, we can
conclude that $ B_{m}^{H} $ is connected to $ K_{m,n}^{\alpha,\beta} $ with high
probability.

The following assertion describes the structural property of $ \mathbb{Z}^{d} $ we
will make use of:
\begin{cla}\label{cla:auxgm}
  For every $ M,k \in \mathbb{N} $, $ M \geq 2 $, there exists
  $ T (M,k) \in \mathbb{N} $ such that if $ A \subset \mathbb{Z}^{d} $ and
  $ \abs{A} > T (M,k) $, then there is a subset
  $ \cbra*{x_{1},\ldots,x_{M}} \subset A $ satisfying
  $ \norm{x_{i}-x_{j}}_{\infty} > k $ for every $ i \neq j $, where
  $ 1 \leq i,j \leq M $.
\end{cla}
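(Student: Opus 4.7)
The plan is to prove the claim by a straightforward greedy selection argument, noting that the set of vertices within $\|\cdot\|_\infty$-distance $\le k$ of any fixed point of $\mathbb{Z}^d$ is a finite cube of cardinality $(2k+1)^d$. Because the claim is about an arbitrary finite subset $A \subset \mathbb{Z}^d$ and not about its geometry, nothing more subtle than pigeonhole is required, and the explicit bound $T(M,k) = (M-1)(2k+1)^d$ will do.

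Concretely, I would set $T(M,k) \coloneqq (M-1)(2k+1)^d$ and proceed inductively. Pick any $x_1 \in A$, then define $A_1 \coloneqq A \setminus \{x \in \mathbb{Z}^d : \|x - x_1\|_\infty \le k\}$. Since the removed set has exactly $(2k+1)^d$ elements, $|A_1| \ge |A| - (2k+1)^d$. Assuming we have selected $x_1, \dots, x_j \in A$ pairwise at $\|\cdot\|_\infty$-distance strictly greater than $k$, set
\[
A_j \coloneqq A \setminus \bigcup_{i=1}^{j} \bigl\{x \in \mathbb{Z}^d : \|x - x_i\|_\infty \le k\bigr\},
\]
so that $|A_j| \ge |A| - j(2k+1)^d$. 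If $|A| > (M-1)(2k+1)^d$, this guarantees $A_{M-1} \neq \emptyset$, and any element $x_M \in A_{M-1}$ satisfies $\|x_M - x_i\|_\infty > k$ for all $i < M$. This produces the desired $M$-tuple.

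The main (and essentially only) subtlety is to get the bound on the size of the forbidden region correct: the $\|\cdot\|_\infty$-ball of radius $k$ centered at $x_i$ is the cube $x_i + \{-k,\dots,k\}^d$, hence has cardinality $(2k+1)^d$, which gives the constant in $T(M,k)$. There is no genuine obstacle here; this is purely a packing estimate that will be invoked later (with $k = 2m+1$ or similar) to guarantee that many disjoint candidate $m$-seeds in $F_n^{\alpha,\beta}$ can be found simultaneously.
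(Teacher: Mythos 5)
Your greedy pigeonhole argument is correct, and the explicit bound $T(M,k) = (M-1)(2k+1)^d$ works: after $j$ selections, at most $j(2k+1)^d$ points of $A$ have been excluded, so the process does not stall before step $M$. The paper states this claim without proof, treating it as an elementary packing fact, and your argument is precisely the standard one the authors have in mind.
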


\begin{lem}\label{lem:gm3}
  If $ \theta (p,q) > 0 $ and $ p < p_{c} (d) < q < p_{c} (s) $, then for every
  $ \alpha,\beta,\eta \in (0,\infty) $, there exist $ m,n \in \mathbb{N} $ such that
  \[
    P_{p,q} \paren[\Big]{B_{m}^{H} \xleftrightarrow{B_{\beta n' + \alpha n'}}
      K_{m,n'}^{\alpha,\beta}} > 1 - \eta \quad \text{for all } n' \geq n.
  \]
  \begin{proof}
    If $ \theta (p,q) > 0 $, then there exists $ m \in \mathbb{N} $ such that
    \begin{align}\label{eq:lem:gm3:1}
      P_{p,q} ( B_{m}^{H} \leftrightarrow \infty )
      & > 1 - \paren*{\frac{\eta}{2}}^{s2^{s}}.
    \end{align}
    Let $ M \in \mathbb{N} $ be such that
    \begin{align}\label{eq:lem:gm3:2}
      p P_{p,q} ( B_{m}^{H} \text{ is an $ m $-seed} )
      & > 1 - \paren*{\frac{\eta}{2}}^{1/M}
    \end{align}
    and fix $ l = T ( M,2 ( 2m+1 )+2 ) $ as in Claim \ref{cla:auxgm}. By
    Lemma~\ref{lem:gm2} and \eqref{eq:lem:gm3:1}, it follows that there exists an
    $ n \in \mathbb{N} $ such that
    \begin{align}\label{eq:lem:gm3:3}
      P_{p,q} \paren[\big]{\abs{V_{n'}^{\alpha,\beta}} \geq l}
      & > 1 - \frac{\eta}{2} \quad \text{for all } n' \geq n.
    \end{align}
    Now, let $ n' \geq n $ and note that Claim~\ref{cla:auxgm} ensures that, for
    every configuration in the event
    $ \cbra[\big]{\abs[\big]{V_{n'}^{\alpha,\beta}} \geq l} $, there is a subset
    $ \cbra*{x_{1},\ldots,x_{M}} \subset V_{n'}^{\alpha,\beta} $ satisfying
    $ \norm{x_{i} - x_{j}}_{\infty} > 2 ( 2m+1 ) + 2 $ for every $ i \neq j $, where
    $ 1 \leq i,j \leq M $. Hence, if $ y_{i} $ is the unique neighbor of $ x_{i} $
    that belongs to $ F_{n'}^{\alpha,\beta} $ and $ B_{m,i}^{H} \subset H $ is a box
    of side length $ 2m $ containing $ y_{i} $, then
    $ B_{m,i}^{H} \cap B_{m,j}^{H} = \emptyset $ for every $ i \neq j $,
    $ 1 \leq i,j \leq M $. Since the event
    $ \cbra[\big]{\abs{V_{n'}^{\alpha,\beta}} \geq l} $ does not depend on the states
    of the edges in $ S_{n'}^{\alpha,\beta} $ and of
    $ \Delta_{e} S_{n'}^{\alpha,\beta} $, inequalities~\eqref{eq:lem:gm3:2} and
    \eqref{eq:lem:gm3:3} imply
    \begin{align*}
      P_{p,q} \paren[\Big]{B_{m}^{H} \xleftrightarrow{B_{\beta n' + \alpha n'}}
      K_{m,n'}^{\alpha,\beta}}
      & \geq
        P_{p,q} \paren[\Big]{\cbra[\big]{\abs[\big]{V_{n'}^{\alpha,\beta}} \geq l}
        \cap \sbra[\Big]{\bigcup_{i=1}^{M}
        \cbra[\big]{x_{i} \in K_{m,n'}^{\alpha,\beta}}}} \\
      & \geq
        1 - \eta.
    \end{align*}
  \end{proof}
\end{lem}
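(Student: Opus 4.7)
The plan is to combine the supply of connected vertices guaranteed by Lemma~\ref{lem:gm2} with an independent seed-planting argument, using Claim~\ref{cla:auxgm} to arrange the necessary geometric separation. Since $\theta(p,q) > 0$, I would first pick $m$ so large that $P_{p,q}(B_m^H \leftrightarrow \infty) > 1 - (\eta/2)^{s 2^s}$, which by Lemma~\ref{lem:gm2} forces $P_{p,q}(\abs{V_{n'}^{\alpha,\beta}} \geq l) > 1 - \eta/2$ for any prescribed $l$, provided $n'$ is large enough.

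Next I would choose the threshold $l$ so that, on the event $\{\abs{V_{n'}^{\alpha,\beta}} \geq l\}$, Claim~\ref{cla:auxgm} produces $M$ vertices $x_1,\dots,x_M \in V_{n'}^{\alpha,\beta}$ whose pairwise $\ell^\infty$-distances exceed $2(2m+1)+2$; concretely $l = T(M, 2(2m+1)+2)$ suffices. For each $x_i$ let $y_i$ be its unique neighbor inside $F_{n'}^{\alpha,\beta}$; the separation guarantees that one may pick boxes $B_{m,i}^H \subset F_{n'}^{\alpha,\beta}$ of side $2m$ containing $y_i$ whose unions, together with the edges $\{x_i,y_i\}$, involve pairwise disjoint sets of edges. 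Crucially, the event $\{\abs{V_{n'}^{\alpha,\beta}} \geq l\}$ is measurable with respect to edges outside $S_{n'}^{\alpha,\beta} \cup \Delta_e S_{n'}^{\alpha,\beta}$, so all edges relevant to seed construction remain free.

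With separation in place, conditionally on $V_{n'}^{\alpha,\beta}$ the events ``$\{x_i,y_i\}$ is open and $B_{m,i}^H$ is an $m$-seed'' are mutually independent, each with probability at least $p \cdot P_{p,q}(B_m^H \text{ is an } m\text{-seed})$, a strictly positive quantity under the hypothesis $q<p_c(s)$ (in particular $q<1$, so the probability of an $m$-seed is positive). I would then pick $M$ large enough that $p \cdot P_{p,q}(B_m^H \text{ is an } m\text{-seed}) > 1 - (\eta/2)^{1/M}$, so that the probability no candidate is linked to a seed is below $\eta/2$. A final union bound against the failure of $\{\abs{V_{n'}^{\alpha,\beta}} \geq l\}$ yields $P_{p,q}(B_m^H \xleftrightarrow{B_{\beta n' + \alpha n'}} K_{m,n'}^{\alpha,\beta}) > 1-\eta$, as desired.

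The main obstacle, flagged by the authors just before the lemma, is arranging disjoint seed boxes without imposing unwanted divisibility conditions on $\alpha n$ and $\beta n$ by $2m+1$ as in the original Grimmett--Marstrand setup~\cite{gm90}. Claim~\ref{cla:auxgm} is precisely the combinatorial device designed to sidestep this; once it is invoked, the rest reduces to choosing the three parameters $m$, $M$, $n$ in the correct order and performing two elementary union bounds alongside the conditional independence just described.
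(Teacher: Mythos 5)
Your proposal follows essentially the same route as the paper's proof: choose $m$ from $\theta(p,q)>0$, choose $M$ so that the seed-planting probability beats $1-(\eta/2)^{1/M}$, set $l=T(M,2(2m+1)+2)$ via Claim~\ref{cla:auxgm}, invoke Lemma~\ref{lem:gm2} for $n$, and exploit the independence coming from the fact that $\{\abs{V_{n'}^{\alpha,\beta}}\geq l\}$ is measurable off $\mathsf{E}_{S_{n'}^{\alpha,\beta}}\cup\Delta_e S_{n'}^{\alpha,\beta}$. One tiny slip: the positivity of $p\,P_{p,q}(B_m^H \text{ is an $m$-seed})$ needs $q>0$ (which follows from $q>p_c(d)$), not $q<1$ as your parenthetical suggests, but this does not affect the correctness of the argument.
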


The previous result illustrates what kind of long-range connections we intend to use
in the proof of Theorem~\ref{thm:pq-approx-slabs}. To properly use them, we consider
the following improvement of Lemma~\ref{lem:gm3}, which is the equivalent of Lemma~6
of~\cite{gm90}.

Recall that, for $ S \subset \mathbb{Z}^{d} $, we have
$ \mathsf{E}_{S} \coloneqq \cbra*{e \in \mathbb{E}^{d} \vcentcolon e \subset S} $,
and let $ P $ be the probability measure associated with the family
$ \cbra*{U (e) \vcentcolon e \in \mathbb{E}^{d}} $ of i.i.d.\ random variables having
uniform distribution in $ \sbra*{0,1} $. In this context, for $ p \in [0,1] $, we say
that $ e \in \mathbb{E}^{d} $ is $ p $\emph{\textbf{-open}} if $ U (e) \leq p $ and
$ p $\emph{\textbf{-closed}} otherwise. We also say that a subset
$ F \subset \mathbb{E}^{d} $ is $ (p,q) $\emph{\textbf{-open}} if every edge of
$ F \cap ( \mathbb{E}^{d} \setminus \mathsf{E}_{H} ) $ is $ p $-open and every edge
of $ F \cap \mathsf{E}_{H} $ is $ q $-open.

\begin{lem}[Finite-size criterion]\label{lem:gm4}
  Assume that $ \theta (p,q) > 0 $ for some $ p < p_{c} (d) < q < p_{c} (s) $. Then,
  for every $ \epsilon, \delta > 0 $ and $ \alpha,\beta > 0 $, there exist
  $ m,n \in \mathbb{N} $ with the following property:

  Suppose $ n' \in \mathbb{N} $ and $ R \subset \mathbb{Z}^{d} $ satisfy
  $ B_{m}^{H} \subset R \subset B_{\beta n' + \alpha n'} $ and
  $ ( R \cup \Delta_{v} R ) \cap T_{n'}^{\alpha,\beta} = \emptyset $. Also, let
  $ \gamma \vcentcolon \Delta_{e} R \cap \mathsf{E}_{B_{\beta n' + \alpha n'}} \to
  \sbra{0,1 - \delta} $ be any function and define the events
  \begin{align*}
    E_{n'} & \coloneqq
            \begin{Bmatrix}
              \text{there is a path joining $ R $ to
                $ K_{m,n'}^{\alpha,\beta} $ which is $ (p,q) $-open} \\
              \text{outside $ \Delta_{e} R $ and $ ( \gamma (f) + \delta ) $-open in
                its only edge $ f \in \Delta_{e} R$}
            \end{Bmatrix}, \\
    F_{n'} & \coloneqq
            \begin{Bmatrix}
              \text{$ f $ is $ \gamma (f) $-closed for every
              $ f \in \Delta_{e} R \cap \mathsf{E}_{B_{\beta n' + \alpha n'}} $}
            \end{Bmatrix}.
  \end{align*}
  Then $ P ( E_{n'} | F_{n'} ) > 1 - \epsilon $ for every $ n' \geq n $.
\end{lem}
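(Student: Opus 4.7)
The plan is to adapt Lemma~6 of Grimmett--Marstrand to the inhomogeneous setting. The strategy is to construct, independently of the conditioning event $F_{n'}$, a random set of ``candidate entry edges'' on $\Delta_e R \cap \mathsf{E}_{B_{\beta n'+\alpha n'}}$, and then use conditional independence to promote one of them from $\gamma(f)$-closed to $(\gamma(f)+\delta)$-open. The starting observation is that, conditionally on $F_{n'}$, the family $\{U(f)\}_{f \in \Delta_e R \cap \mathsf{E}_{B_{\beta n'+\alpha n'}}}$ is independent, each $U(f)$ is uniform on $[\gamma(f),1]$, and this family is independent of the states of the edges outside $\Delta_e R$, which retain their $P_{p,q}$ distribution. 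Since $\gamma(f) \le 1 - \delta$, the event $\{U(f) \le \gamma(f) + \delta\}$ has conditional probability $\delta/(1-\gamma(f)) \ge \delta$ for each such $f$.

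First, I would define
\[
\mathcal{F} \coloneqq \bigl\{f = \{u,v\} \in \Delta_e R \cap \mathsf{E}_{B_{\beta n'+\alpha n'}} : u \in R,\ v \notin R,\ v \xleftrightarrow{B_{\beta n'+\alpha n'} \setminus R} K_{m,n'}^{\alpha,\beta}\bigr\},
\]
where the connection uses $(p,q)$-open paths inside $B_{\beta n'+\alpha n'} \setminus R$ (hence not touching any edge of $\Delta_e R$). Then $\mathcal{F}$ is measurable with respect to $\sigma(U(e): e \notin \Delta_e R)$, and therefore independent of $F_{n'}$. The central step is the following claim: for every $N \in \mathbb{N}$ and every $\eta_0 > 0$, there exist $m,n$ such that $P(|\mathcal{F}| \ge N) > 1 - \eta_0$ for every $n' \ge n$ and every admissible $R$. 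Lemma~\ref{lem:gm3} directly gives that, with probability $> 1 - \eta_0$, some seed in $K_{m,n'}^{\alpha,\beta}$ is connected to $B_m^H \subset R$ inside $B_{\beta n'+\alpha n'}$; taking the last crossing of $\Delta_e R$ along such a path shows $|\mathcal{F}| \ge 1$. To upgrade this to $|\mathcal{F}| \ge N$, one uses that the proof of Lemma~\ref{lem:gm3} actually produces $M$ well-separated candidate seeds $x_1,\dots,x_M$ via Claim~\ref{cla:auxgm}. By increasing both the separation scale and $M$ suitably, and combining FKG with a BK-type disjointness argument on the backward paths from each $x_i$ to $R$, one obtains with high probability at least $N$ distinct edges of $\Delta_e R$ that are last crossings of such disjoint paths.

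Finally, conditionally on $F_{n'}$ and on $\mathcal{F}$, the upgrade events $\{U(f) \le \gamma(f) + \delta\}_{f \in \mathcal{F}}$ are independent with probabilities $\ge \delta$, so
\[
P(E_{n'} \mid F_{n'}, \mathcal{F}) \ge 1 - \prod_{f \in \mathcal{F}} \Bigl(1 - \frac{\delta}{1 - \gamma(f)}\Bigr) \ge 1 - (1-\delta)^{|\mathcal{F}|}.
\]
Taking the expectation and using the independence of $\mathcal{F}$ and $F_{n'}$,
\[
P(E_{n'} \mid F_{n'}) \ge 1 - E\bigl[(1-\delta)^{|\mathcal{F}|}\bigr] \ge 1 - (1-\delta)^N - P(|\mathcal{F}| < N).
\]
Given $\epsilon > 0$, I would choose $N$ with $(1-\delta)^N < \epsilon/2$ and then, via the key claim, pick $m, n$ so that $P(|\mathcal{F}| < N) < \epsilon/2$ for every $n' \ge n$, yielding $P(E_{n'} \mid F_{n'}) > 1 - \epsilon$. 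The main obstacle is the ``many entries'' estimate: the set $R$ is arbitrary subject only to the geometric constraints, so one has no direct control on the shape of $\Delta_v R$, and the delicate point is showing that the backward connections from the $M$ seeds really do yield $N$ distinct edges of $\Delta_e R$, rather than bunching at a small portion of $\Delta_v R$. This is the main new technical ingredient compared to the homogeneous case, where the analogous multiplicity step is cleaner.
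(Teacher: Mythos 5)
Your overall skeleton is correct and matches the standard route: condition on $F_{n'}$, observe that the states of edges outside $\Delta_e R$ are unchanged and the states of edges in $\Delta_e R$ are conditionally independent and uniform on $[\gamma(f),1]$; define the set $\mathcal{F}$ of boundary edges whose outer endpoint is $(p,q)$-connected to $K_{m,n'}^{\alpha,\beta}$ in $B_{\beta n'+\alpha n'}\setminus R$; and bound $P(E_{n'}^c\mid F_{n'})\le E[(1-\delta)^{|\mathcal{F}|}]$. So far this is exactly right. The genuine gap is your ``many entries'' estimate, i.e.\ the claim that $P(|\mathcal{F}|\ge N)$ can be made close to $1$ uniformly in the admissible sets $R$.

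Your proposed mechanism for this claim --- that the well-separated candidate seeds produced by Claim~\ref{cla:auxgm} in the proof of Lemma~\ref{lem:gm3}, ``combined with FKG and a BK-type disjointness argument on the backward paths,'' yield $N$ distinct last-crossing edges of $\Delta_e R$ --- does not work as stated, and you in fact identify the obstruction yourself: nothing prevents all the backward connections from funnelling through one bottleneck edge of $\Delta_e R$. Well-separatedness of the targets in $T_{n'}^{\alpha,\beta}$ gives no control whatsoever on where the paths hit $\Delta_v R$, since $R$ is an arbitrary set, and neither FKG nor BK produces edge-disjoint witnesses out of a single non-disjoint event. Moreover, you describe this as ``the main new technical ingredient compared to the homogeneous case,'' but the same issue is already present in Grimmett--Marstrand and is resolved there without any multi-seed refinement of Lemma~\ref{lem:gm3}.

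The correct and much shorter argument is a finite-energy (closing) estimate, which you should substitute for the sketched FKG/BK step. Observe first that any $(p,q)$-open path in $B_{\beta n'+\alpha n'}$ from $B_m^H\subset R$ to $K_{m,n'}^{\alpha,\beta}$ (which is disjoint from $R\cup\Delta_v R$) has a last exit edge $f$ from $R$; the portion of the path beyond $f$ stays in $B_{\beta n'+\alpha n'}\setminus R$, so $f\in\mathcal{F}$ and $f$ is $(p,q)$-open. Hence if every edge of $\mathcal{F}$ is $(p,q)$-closed, then $B_m^H\nleftrightarrow K_{m,n'}^{\alpha,\beta}$. Since $\mathcal{F}$ is measurable with respect to $\sigma(U(e):e\in\mathsf{E}_{B_{\beta n'+\alpha n'}\setminus R})$ and is independent of $(U(f))_{f\in\Delta_e R}$, and since each $f\in\Delta_e R$ is $(p,q)$-closed with probability at least $1-(p\vee q)>0$, we get
\begin{align*}
P\paren[\big]{B_m^H\nleftrightarrow K_{m,n'}^{\alpha,\beta}}
\;\ge\; E\sbra[\big]{(1-(p\vee q))^{|\mathcal{F}|}}
\;\ge\; (1-(p\vee q))^{N}\,P(|\mathcal{F}|\le N).
\end{align*}
Therefore $P(|\mathcal{F}|\le N)\le (1-(p\vee q))^{-N}P(B_m^H\nleftrightarrow K_{m,n'}^{\alpha,\beta})$, uniformly over admissible $R$. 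Now, given $\epsilon,\delta$, first fix $N$ with $(1-\delta)^N<\epsilon/2$, then apply Lemma~\ref{lem:gm3} with $\eta=(\epsilon/2)(1-(p\vee q))^{N}$ to obtain $m,n$ such that $P(B_m^H\nleftrightarrow K_{m,n'}^{\alpha,\beta})<\eta$ for all $n'\ge n$; this yields $P(|\mathcal{F}|<N)<\epsilon/2$, hence $E[(1-\delta)^{|\mathcal{F}|}]<\epsilon$, which is what you need. This replaces your problematic step cleanly, requires no strengthening of Lemma~\ref{lem:gm3}, and is uniform over $R$ because the left-hand side $P(B_m^H\nleftrightarrow K_{m,n'}^{\alpha,\beta})$ does not depend on $R$.
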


The proof is analogous to its counterpart, therefore we refer the reader to Lemma~6
of~\cite{gm90}.

The idea for proving Theorem~\ref{thm:pq-approx-slabs} is to recursively grow the
cluster of the origin of $ \mathbb{Z}^{d} $ to more distant regions, jumping from a
recently obtained seed to a farther one, and keep this process going indefinitely
with positive probability. Similarly to~\cite{gm90}, due to the geometrical nature of
our connections, it is not possible to perform such exploration independently. As a
matter of fact, any attempt to reach a new open seed from a recently obtained one
always involves an already explored region of $ \mathbb{Z}^{d} $ that contains closed
edges in its external boundary, creating a problem to the direct application of
Lemma~\ref{lem:gm3}. Lemma~\ref{lem:gm4} solves this issue by stating that if we give
these explored closed edges a small extra chance to be open, then the desired
long-range connections can be attained with high probability $ P $.

\begin{rem}
  \label{rem:multiple-box-sizes}
  It is important to emphasize the condition ``for every $ n' \geq n $'' in the
  statement of Lemma~\ref{lem:gm4}. Further on, we will need to choose a finite
  number of pairs $ (\alpha_{1},\beta_{1}),\ldots,(\alpha_{l},\beta_{l}) $, and check
  that there exists $ n_{0} \in \mathbb{N} $ such that
  $ P ( E_{n_{0}}^{\alpha_{i},\beta_{i}} | F_{n_{0}}^{\alpha_{i},\beta_{i}} ) $ is
  sufficiently large, for every $ i = 1,\ldots,l $. Since for each pair
  $ (\alpha_{i},\beta_{i}) $, there exists $ n(\alpha_{i},\beta_{i}) \in \mathbb{N} $
  such that $ P ( E_{n'}^{\alpha_{i},\beta_{i}} | F_{n'}^{\alpha_{i},\beta_{i}} ) $
  is sufficiently large for every $ n' \geq n(\alpha_{i},\beta_{i}) $, the desired
  result is achieved if we consider
  $ n_{0} = \max_{1 \leq i \leq l} n(\alpha_{i},\beta_{i}) $. The necessity of
  working with boxes of multiple sizes is particular to our setting. This
  technicality differs from~\cite{gm90}, where the authors needed to use just one
  size of box in their renormalization process.
\end{rem}

The last technical result we need is Lemma~1 of~\cite{gm90}, stated in the following.

Let $ G = (V,E) $ be an infinite and connected graph. Suppose we have a collection of
random variables $ \cbra*{ Z (x) \in \cbra*{0,1} \vcentcolon x \in V} $ defined in
some probability space $ ( \Omega,\mathcal{F},\mu ) $, let $ f_{1},f_{2},\ldots $ be
an ordering of the edges in $ E $ and fix $ x_{1} \in V $. Consider the following
random sequence $ \mathcal{S} = \cbra*{S_{t} = (A_{t},B_{t})}_{t \in \mathbb{N}} $ of
ordered pairs of subsets of $ V $: let
\begin{align*}
  S_{1}
  & =
    \begin{cases}
      (\cbra*{x_{1}},\emptyset),
      & \text{if $ Z (x_{1}) = 1 $,} \\
      (\emptyset,\cbra*{x_{1}}),
      & \text{if $ Z (x_{1}) = 0 $.}
    \end{cases}
\end{align*}
Having obtained $ S_{1},\ldots,S_{t} $ for $ t \geq 1 $, we define $ S_{t+1} $ in the
following manner: denote $ f_{i} = \{ u_{i},v_{i} \} $ and let
$ j_{t+1} = \inf \cbra*{i \vcentcolon u_{i} \in A_{t}, v_{i} \in V \setminus (A_{t}
  \cup B_{t})} $, with the convention that $ \inf \emptyset = \infty $. If
$ j_{t+1} < \infty $, let $ x_{t+1} = v_{j_{t+1}} $ and declare
\begin{align*}
  S_{t+1}
  & =
    \begin{cases}
      (A_{t} \cup \cbra*{x_{t+1}}, B_{t}),
      & \text{if } Z (x_{t+1}) = 1 \\
      (A_{t}, \cbra*{x_{t+1}} \cup B_{t}),
      & \text{if } Z (x_{t+1}) = 0.
    \end{cases}
\end{align*}
Otherwise, declare $ S_{t+1} = S_{t} $. We call $ \mathcal{S} $ the
\emph{\textbf{cluster-growth process}} of the vertex $ x_{1} $ with respect to
$ ( Z(x) )_{x \in V} $. Note that the, in the context of site percolation, the open
cluster $\mathcal{C} (x_{1}) $ of $ x_{1} $ with respect to $ ( Z(x) )_{x \in V} $ is
the set $ A_{\infty} = \bigcup_{t \geq 1} A_{t} $ and its external vertex boundary is
the set $ B_{\infty} = \bigcup_{t \geq 1} B_{t} $.

Now, let $ p_{c}^{\text{site}} (G) \in (0,1) $ be the Bernoulli site percolation
threshold for $ G $ and define
\begin{align*}
  \rho ( \mathcal{S},t )
  & \coloneqq
    \begin{cases}
      \mu (Z (x_{t+1}) = 1 \vert S_{1},\ldots,S_{t}),
      & \text{if } j_{t+1} < \infty, \\
      1,
      & \text{otherwise.}
    \end{cases}
\end{align*}

The next result states that the cluster of $ x_{1} $ with respect to
$ ( Z(x) )_{x \in V} $ is infinite with positive probability $ \mu $ provided that,
when performing the cluster-growth process of $ x_{1} $, the conditional probability
of augmenting the set $ A_{t} $ at any step $ t \in \mathbb{N} $ exceeds the
parameter of a supercritical Bernoulli site percolation process on $ G $.

\begin{lem}[Renormalization condition]\label{lem:gm5}
  If there exists $ \lambda \in (p_{c}^{\text{site}} (G),1) $ such that
  \begin{align}
    \label{eq:lem:gm5:hyp}
    \rho ( \mathcal{S},t )
    & \geq \lambda \; \text{ for all $ t \in \mathbb{N} $,}
  \end{align}
  then $ \mu (\abs*{A_{\infty}} = \infty) > 0 $.
\end{lem}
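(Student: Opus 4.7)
The plan is to prove the statement by a stochastic-domination coupling of $\mathcal{S}$ with a Bernoulli site percolation on $G$ of parameter $\lambda$. Since $\lambda > p_{c}^{\text{site}}(G)$, the latter process has an infinite cluster of $x_{1}$ with positive probability; the coupling will be constructed so that this infinite cluster is contained in $A_{\infty}$, immediately yielding $\mu(\abs{A_{\infty}} = \infty) > 0$.

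To set up the coupling, I would enlarge the probability space to support an independent family $(U(x))_{x \in V}$ of i.i.d.\ Uniform$[0,1]$ random variables. Declaring $\hat{Z}(x) \coloneqq \mathbf{1}\{U(x) \leq \lambda\}$ produces an honest Bernoulli$(\lambda)$ site percolation, whose cluster of $x_{1}$ I denote by $\hat{\mathcal{C}}(x_{1})$. Simultaneously, using the inverse-distribution-function trick, one can realize $\mathcal{S}$ by decreeing that, at each step $t$ with $j_{t+1} < \infty$, after $S_{1}, \ldots, S_{t}$ and hence $\rho(\mathcal{S}, t)$ are revealed,
\[
  Z(x_{t+1}) = \mathbf{1}\{U(x_{t+1}) \leq \rho(\mathcal{S}, t)\}.
\]
Conditionally on the past, $Z(x_{t+1})$ then has the Bernoulli distribution with parameter $\rho(\mathcal{S}, t)$, so the law of $\mathcal{S}$ is preserved. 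The hypothesis~\eqref{eq:lem:gm5:hyp} gives $\hat{Z}(x) \leq Z(x)$ at every vertex $x$ examined by the cluster-growth.

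The key claim is then $\hat{\mathcal{C}}(x_{1}) \subset A_{\infty}$ almost surely, which I would establish by induction on graph distance to $x_{1}$ inside $\hat{\mathcal{C}}(x_{1})$: if $v \in \hat{\mathcal{C}}(x_{1})$ has a Bernoulli-open neighbour $u$ already shown to lie in $A_{\infty}$, then the exploration eventually enumerates the edge $\{u,v\}$ (every edge with one endpoint in $A_{\infty}$ is reached by $\mathcal{S}$ in finite time), and when it does, the coupled uniforms give $\hat{Z}(v) \leq Z(v)$, so $\hat{Z}(v) = 1$ forces $v \in A_{\infty}$.

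The main obstacle is the familiar subtlety that the exploration orders of $\mathcal{S}$ and of the Bernoulli cluster-growth can diverge, so naive step-indexed uniforms do not couple the two processes cleanly. Using vertex-indexed uniforms $(U(x))_{x \in V}$, as above, sidesteps this: the acceptance threshold of $v$ in $\mathcal{S}$ depends only on the history at the time $v$ is first examined, while the Bernoulli status $\hat{Z}(v)$ depends only on $U(v)$, and the dominance $\lambda \leq \rho(\mathcal{S}, t)$ is uniform in the history. Once $\hat{\mathcal{C}}(x_{1}) \subset A_{\infty}$ is in place, $\mu(\abs{A_{\infty}} = \infty) \geq \mathbb{P}(\abs{\hat{\mathcal{C}}(x_{1})} = \infty) > 0$, completing the proof.
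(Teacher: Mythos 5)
Your proof is correct and follows essentially the same route as the one the paper points to, namely Lemma~1 of Grimmett--Marstrand: a stochastic-domination coupling of the cluster-growth process with a Bernoulli$(\lambda)$ site percolation, built by attaching one i.i.d.\ uniform to each vertex and noting that each examined vertex is accepted with conditional probability at least $\lambda$. The vertex-indexed coupling you describe is the standard way to sidestep the ordering mismatch between the two explorations, and your chain $\mu(\abs{A_{\infty}}=\infty)\geq\mathbb{P}(\abs{\hat{\mathcal{C}}(x_{1})}=\infty)>0$ is exactly the intended conclusion.

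Two small points worth tightening. First, in the induction step the relevant statement is not that the exploration eventually enumerates the specific edge $\{u,v\}$, but that the vertex $v$ is eventually examined (possibly via some other edge incident to $A_{\infty}$); once $v$ is examined for the first time, at whatever step $t$ and via whatever edge, $U(v)\leq\lambda\leq\rho(\mathcal{S},t)$ forces $Z(v)=1$ and hence $v\in A_{\infty}$. That $v$ must eventually be examined follows from the priority-queue nature of the exploration: once $u\in A_{t_0}$, the finitely many edges with smaller index than $\{u,v\}$ can each be selected at most once, so either $v$ enters $A\cup B$ through one of them or $\{u,v\}$ itself is chosen after finitely many further steps. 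Second, the base step of your induction, $x_{1}$ itself, requires the acceptance probability at the very first examination to be at least $\lambda$ as well; the paper's indexing of $\rho(\mathcal{S},t)$ starts at $t\ge 1$, so one should either read $\mathbb{N}$ as including $0$ with $\rho(\mathcal{S},0)=\mu(Z(x_{1})=1)$, or note separately that this initial probability is at least $\lambda$ in the application. Neither issue affects the validity of your overall argument.
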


We refer the reader to Lemma~1 of~\cite{gm90} for a proof. We also stress that an
analogous result also holds if we introduce an orientation to the edges of $ G $.
This is particularly important in our case, since the renormalized graph we shall
consider in the sequel is an oriented one.

\subsection{The renormalization process}\label{sec:renormalization}

To prove Theorem~\ref{thm:pq-approx-slabs}, it suffices to show that, for any
$ p < p_{c}(d) $, $ \eta > 0 $ and $ q = q_{c} (p) + \eta/2 $, there exists
$ N \in \mathbb{N} $ such that, with positive probability, the origin lies in an
infinite $ (p + \eta/2, q + \eta/2) $-open cluster within
$ \mathbb{Z}^{2} \times \{ -N,\ldots,N \}^{d-2} $. As already mentioned, we rely on
the classical approach of Grimmett and Marstrand~\cite{gm90} to show that the
restriction of the inhomogeneous process with parameters $ p + \eta/2 $ and
$ q + \eta/2 $ to the slab $ \mathbb{Z}^{2} \times \{ -N,\ldots,N \}^{d-2} $
stochastically dominates a supercritical percolation process on the graph
$ G = (V,E) $, with vertex set
$ V = \cbra{x \in \mathbb{Z}^{+} \times \mathbb{Z} \vcentcolon x_{1} + x_{2} \text{
    is even}} $ and edge set
$ E = \cbra{ \cbra{x, x + (1, \pm 1)} \vcentcolon x \in V} $. The orientation of the
edges is to be taken from $ x $ to $ x + (1, \pm 1) $, for every $ x \in V $. The
stochastic domination occurs in the sense that if the cluster of the origin of the
latter is infinite, then the cluster of the origin of the former is infinite as well.

The above idea is carried out with the aid of the following renormalization scheme:
we construct a (dependent) oriented site percolation process on $ G $, defined in
terms of some special events lying on the space $ ( [0,1]^{\mathbb{E}^{d}}, P ) $,
where $ P $ denotes the probability measure associated with the family
$ \cbra{U (e) \vcentcolon e \in \mathbb{E}^{d}} $ of i.i.d.\ random variables having
uniform distribution in $ [0,1] $. We do this by specifying a collection of random
variables $ \cbra*{Z (x) \in \cbra{0,1} \vcentcolon x \in V} $, which encode
information about the existence of large $ (p + \eta/2, q + \eta/2) $-open paths in
$ \mathbb{Z}^{2} \times \{ -N,\ldots,N \}^{d-2} $. In particular, when considering
the cluster-growth process of the origin with respect to $ ( Z(x) )_{x \in V} $, we
will require that
\begin{enumerate}[label=\bfseries{\roman{enumi}.}]
  \item \label{renorm:req:1} property \eqref{eq:lem:gm5:hyp} holds for some
    $ \lambda \in ( \vec{p}_{c}^{\,\text{site}} (G),1 ) $, so that
    $ \abs*{A_{\infty}} $ is infinite with positive probability by Lemma~\ref{lem:gm5};
  \item \label{renorm:req:2} if $ \abs{A_{\infty}} = \infty $, then the origin
    percolates in $ \mathbb{Z}^{2} \times \{ -N,\ldots,N \}^{d-2} $ by a
    $ (p + \eta/2, q + \eta/2) $-open path.
\end{enumerate}

It is clear that these two conditions combined immediately imply the desired
conclusion. Thus, we proceed to the construction of the process
$ ( Z(x) )_{x \in V} $.

Having fixed $ p < p_{c}(d) $, let $ \eta > 0 $ be small and define
\begin{align}
  \label{eq:renorm:1}
  q
  &= q_{c} (p) + \eta/2
  & \delta
  &= \frac{1}{16} \eta,
  & \epsilon
  &= \frac{1}{150} \paren[\big]{1 - \vec{p}_{c}^{\,\text{site}} (G)}.
\end{align}
Also, consider $ \alpha_{1} = \alpha_{2} = \alpha_{3} = \alpha = 1/100 $ and
$ \beta_{1} = \beta_{2}/2 = \beta_{3}/(2 + \alpha + \alpha^{2}) = 1 $. Since
$ \theta (p,q) > 0 $, Lemma~\ref{lem:gm4} guarantees the existence of
$ m,n \in \mathbb{N} $ such that $ P (E_{n} \vert F_{n}) > 1 - \epsilon $ for each
given pair $ (\alpha_{i},\beta_{i}) $.

For a vertex $ x \in V $ and a subset $ A \subset V $, let
$ x + A \coloneqq \{ x + a \vcentcolon a \in A \}$. Also, let
$ \vec{u}_{1},\ldots,\vec{u}_{d} $ be the canonical basis of $ \mathbb{R}^{d} $ and,
for $ N = 6n $, let $ \Lambda (N) = B_{N} \cup (2N \vec{u}_{2} + B_{N}) $. The
fundamental blocks of the renormalized lattice are the \textbf{\emph{site-blocks}}
\begin{align*}
  \Lambda_{x} = \Lambda_{x} (N)
  & \coloneqq 4Nx + \Lambda (N), \quad x \in V,
\end{align*}
which can be written as the union of a ``lower'' and an ``upper'' translate of
$ B_{N} $, namely
\begin{align*}
  \Lambda_{x}^{l} = \Lambda_{x}^{l} (N)
  & \coloneqq 4Nx + B_{N}, \\
  \Lambda_{x}^{u} = \Lambda_{x}^{u} (N)
  & \coloneqq 2N \vec{u}_{2} + \Lambda_{x}^{l} (N).
\end{align*}
The adjacency relation between site-blocks is the one inherited from $ G = (V,E) $.
That is, for $ x,y \in V $, the boxes $ \Lambda_{x} $ and $ \Lambda_{y} $ are
adjacent if and only if $ \{ x,y \} \in E $. The long-range connections in
$ \mathbb{Z}^{2} \times \{ -N,\ldots,N \}^{d-2} $ we are going to build will occur
between adjacent site-blocks, using its edges and the edges within the
\textbf{\emph{passage-blocks}}
\begin{align*}
  \Pi_{x} = \Pi_{x} (N)
  & \coloneqq [\Lambda_{x} + 2N (\vec{u}_{1} + \vec{u}_{2})] \cup
    [\Lambda_{x} + 2N (\vec{u}_{1} - \vec{u}_{2})], \quad x \in V.
\end{align*}

Having set up the renormalization structure, we are now in a position to define the
random variables $ Z (x) $, $ x \in V $. We will specify them recursively,
considering the first coordinate of each $ x = (x_{1},x_{2}) \in V $. The idea is to
make $ Z (x) $ encode information about connections between seeds inside the
site-blocks $ \Lambda_{x} $, $ \Lambda_{x + (1,1)} $ and $ \Lambda_{x + (1,-1)} $.
These open paths will be contained in
$ \Lambda_{x} \cup \Pi_{x} \cup \Lambda_{x+(1,1)}^{l} \cup \Lambda_{x+(1,-1)}^{u} $
and possess connectivity features such that requirements~\ref{renorm:req:1} and
\ref{renorm:req:2} are fullfilled for
$ \lambda = \sbra*{1 + \vec{p}_{c}^{\,\text{site}} (G)} / 2 $.

We begin by determining the event $ \cbra{Z (o) = 1} $. This will be achieved through
the application of a sequential algorithm, which constructs an increasing sequence
$ E_{1}, E_{2}, \ldots $ of edge-sets by making repeated use of Lemma~\ref{lem:gm4}.
At each step $ k $ of the algorithm, we acquire information about the values of
$ U (e) $ for certain $ e \in \mathbb{E}^{d} $, and record this information into
suitable functions $ \gamma_{k},\zeta_{k} \vcentcolon \mathbb{E}^{d} \to [0,1] $, in
such a way that every $ e \in \mathbb{E}^{d} $ is $ \gamma_{k} (e) $-closed and
$ \zeta_{k} (e) $-open and
\begin{align*}
  \gamma_{k} (e) & \leq \gamma_{k+1} (e),
  & \zeta_{k} (e) & \geq \zeta_{k+1} (e).
\end{align*}
In this context, we respectively regard $ \gamma_{k} $ and $ \zeta_{k} $ as the
acquired ``negative'' and ``positive'' information about the states of the edges of
$ \mathbb{E}^{d} $ up to step $ k $. At the end of each step, the $ \zeta_{k} $-open
cluster of the origin within $ \mathbb{Z}^{2} \times \{ -N,\ldots,N \}^{d-2} $ will
have grown larger and closer to the site-blocks $ \Lambda_{(1,1)} $ and
$ \Lambda_{(1,-1)} $, as we use Lemma~\ref{lem:gm4} to reach new open seeds from the
previouly open ones in a coordinated manner.

In our process, a single attempt of growing the cluster of the origin in the setting
of Lemma~\ref{lem:gm4} will be called a \textbf{\emph{step}} of the exploration. The
determination of $ Z (o) = 1 $ is constituted by a (finite) sequence of successful
steps, specified in the sequel. To make the construction clear, we gather some
particular subsequences of steps together, according to the ``direction of growth''
of the cluster, and call them \emph{\textbf{phases}} of the exploration. A picture of
a configuration such that $ Z (o) = 1 $ is illustrated in
Figure~\ref{fig:origin-occupied}. This event occurs if we succeed in each of the
following phases:

\textbf{Phase 1:}

Let $ E_{1} = \mathsf{E}_{B_{m}^{H}} $. This phase is successful if every edge in
$ E_{1} $ is $ q $-open. In this case, we set
\begin{align*}
  \gamma_{1} (e)
  & = 0, \quad \;\: \text{for all } e \in \mathbb{E}^{d}, \\
  \zeta_{1} (e)
  & =
    \begin{cases}
      q, & \text{if } e \in E_{1}, \\
      1, & \text{otherwise,}
    \end{cases}
\end{align*}
so that every edge $ e \in \mathbb{E}^{d} $ is $ \gamma_{1} (e) $-closed and
$ \zeta_{1} (e) $-open.

\textbf{Phase 2:}

Provided that Phase~1 is successful, we attempt to connect the open seed
$ B_{m}^{H} $ to another $ q $-open $ m $-seed lying in the passage-block $ \Pi_{o} $
by using Lemma~\ref{lem:gm4} in the first series of steps in the same direction.

Let $ \mathcal{P} $ be the collection of all paths in $ \mathbb{Z}^{d} $ and denote
the \emph{\textbf{edge-boundary}} of a subset $ E' \subset \mathbb{E}^{d} $ by
$ \Delta E' \coloneqq \{ f \in \mathbb{E}^{d} \setminus E' \vcentcolon \exists e \in
E' \text{ such that } \abs{f \cap e} =1 \}$. Given $ V' \subset \mathbb{Z}^{d} $,
$ E' \subset \mathbb{E}^{d} $ with $ (E' \cup \Delta E') \subset \mathsf{E}_{V'} $,
and $ \gamma \vcentcolon \mathbb{E}^{d} \to [0,1] $, define
\begin{align*}
  \mathcal{P} (V',E',\gamma)
  & \! \coloneqq \!
    \big\{ \pi = \{ x_{1},\ldots,x_{k} \} \in \mathcal{P} \vcentcolon \pi \subset V',
    \{ x_{1},x_{2} \} \in \Delta E' \text{ and is }
    \gamma(\{ x_{1},x_{2} \}) \text{-open,}\\
  & \hspace{1.6em} \{ x_{i},x_{i+1} \} \in ( E' \cup \Delta E' )^{c}
    \text{ and is } (p,q) \text{-open } \forall i=2,\ldots,k-1 \big\},\\
  \mathcal{V} (V',E',\gamma)
  &  \coloneqq  \bigcup_{\mathclap{\pi \in \mathcal{P} (V',E',\gamma)}} \pi.
\end{align*}
Now, set $ D_{1} = B_{n + \alpha n} $ and let
$ E_{2} = E_{1} \cup \widetilde{E}_{2} $, where $ \widetilde{E}_{2} $ is the set of
all edges with both vertices in $ \mathcal{V} (D_{1},E_{1},\gamma_{1}+\delta) $. This
step is successful if there exists an edge in $ E_{2} $ having an endvertex in
\begin{align*}
  K_{m,n}^{\alpha,1}
  & = \big\{ x \in T_{n}^{\alpha,1} \vcentcolon \exists y \in F_{n}^{\alpha,1}
    \text{ such that }\{ x,y \} \in E \text{ and is $ (p,q) $-open}, \\
  & \hspace{1.6em} y \text{ is in a $ q $-open $ m $-seed in }
    F_{n}^{\alpha,1} \big\}.
\end{align*}
Conditioned that Phase~1 is successful, Lemma~\ref{lem:gm4} implies that this step is
successful with probability at least $ 1 - \epsilon $. In this case, let
\begin{align*}
  \gamma_{2} (e)
  & =
    \begin{cases}
      \gamma_{1} (e),
      & \text{if } e \notin \mathsf{E}_{D_{1}}, \\
      \gamma_{1} (e) + \delta,
      & \text{if } e \in \Delta E_{1} \setminus E_{2}, \\
      q,
      & \text{if } e \in ( \Delta E_{2} \setminus \Delta E_{1} )
      \cap \mathsf{E}_{D_{1}} \cap \mathsf{E}_{H}, \\
      p,
      & \text{if } e \in ( \Delta E_{2} \setminus \Delta E_{1} )
      \cap \mathsf{E}_{D_{1}} \cap \mathsf{E}_{H}^{c}, \\
      0,
      & \text{otherwise,}
    \end{cases} \\
  \zeta_{2} (e)
  & =
    \begin{cases}
      \zeta_{1} (e),
      & \text{if } e \in E_{1}, \\
      \gamma_{1} (e) + \delta,
      & \text{if } e \in \Delta E_{1} \cap E_{2}, \\
      q,
      & \text{if } e \in E_{2} \setminus ( E_{1} \cup \Delta E_{1} )
      \cap \mathsf{E}_{D_{1}} \cap \mathsf{E}_{H}, \\
      p,
      & \text{if } e \in E_{2} \setminus ( E_{1} \cup \Delta E_{1} )
      \cap \mathsf{E}_{D_{1}} \cap \mathsf{E}_{H}^{c}, \\
      1,
      & \text{otherwise.}
    \end{cases}
\end{align*}
Figure~\ref{fig:step-1} illustrates a successful realization of the first step.
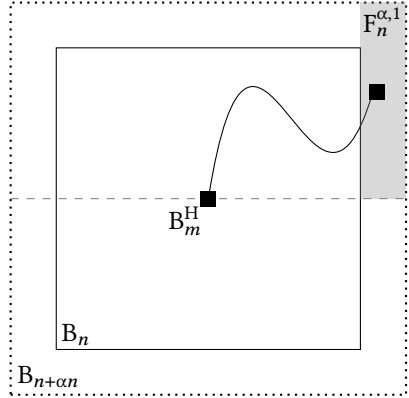
\begin{figure}[h]
  \centering
  \begin{tikzpicture}[scale=2.0]

    \draw[dashed,gray] (-1.3,0) -- (1.3,0);

    \draw (0,0) .. controls (.3,1.9) and (.7,-0.6) .. (1.08,0.7);
    
    \fill (-0.05, -0.05) rectangle ++(0.1,0.1);
    \draw (-1.0, -1.0) rectangle ++(2.0,2.0);
    \draw[dotted,thick] (-1.3, -1.3) rectangle ++(2.6,2.6);
    \fill[gray,opacity=0.3] (+1.0,+0.0) rectangle ++(0.3,1.3);

    \fill (1.06, 0.66) rectangle ++(0.1,0.1);

    \draw (-0.15,-0.15) node {$ B_{m}^{H} $};
    \draw (-0.87,-0.9) node {$ B_{n} $};
    \draw (-1.05,-1.18) node {$ B_{n + \alpha n} $};
    \draw (1.15,1.17) node {$ F_{n}^{\alpha,1} $};
        
  \end{tikzpicture}
  \caption{A successful realization of the first step, projected onto
    $ \mathbb{Z}^{2} \times \{ 0 \}^{d-2} $. The black squares represent the $ q $-open
    $ m $-seeds, connected by a $ \zeta_{2} $-open path indicated by the black curve,
    whose edges are contained in the dotted box $ B_{n + \alpha n} $. The gray region
    represents the set $ F_{n}^{\alpha,1} $, where the new seed is found.}
  \label{fig:step-1}
\end{figure}

Having succeeded with the first step, let
$ b_{2} \in \mathbb{Z}^{s} \times \cbra*{0}^{d-s} $ be the center of the earliest
seed in $ F_{n}^{\alpha,1} $ (in some ordering of all centers) connected to
$ B_{m}^{H} $ and let
\begin{align*}
  D_{2} & = b_{2} + B_{n + \alpha n}.
\end{align*}
In this second step, we proceed to link the seed $ b_{2} + B_{m}^{H} $ to a new seed
$ b_{3} + B_{m}^{H} $ inside $ D_{2} $, in such a way that if we denote
$ b_{k} = (b_{k,1},\ldots,b_{k,d}) $, we have
\begin{align*}
  b_{3,1} - b_{2,1} & \in [n, n + \alpha n], \\
  \abs{b_{3,i}} & \leq n + \alpha n, \; \forall i = 2,\ldots,s, \\
  b_{3,i} & = 0, \; \forall i = s+1,\ldots,d.
\end{align*}
Observe that the first condition imposes a direction for the cluster of the origin to
grow and the second condition constrains it to some adequate boundaries. The third
condition is the requirement $ b_{3} + B_{m}^{H} \subset H $. They can be achieved
through a steering argument analogous to the one in~\cite{gm90}: for a vertex
$ v = ( v_{1},\ldots,v_{d} ) \in \mathbb{Z}^{d} $, let
$ \sigma_{v} \vcentcolon \mathbb{Z}^{d} \to \mathbb{Z}^{d} $ be the application given
by
\begin{equation}
  \label{eq:renorm:steering}
  \begin{aligned}
    \sbra*{\sigma_{v} (x)}_{i}
    & =
    \begin{cases}
      - \sgn (v_{i}) x_{i}, & \text{if } i = 2,\ldots,s, \\
      x_{i}, & \text{if } i = 1 \text{ or } i = s+1,\ldots,d.
    \end{cases}
  \end{aligned}
\end{equation}
We regard $ \sigma_{v} $ as the \textbf{\emph{steering function}}, which, in the
present Phase, is given by \eqref{eq:renorm:steering}. Its definition will be
modified for the subsequent Phases, whenever necessary.

Let $ E_{3} = E_{2} \cup \widetilde{E}_{3} $, where $ \widetilde{E}_{3} $ is the set
of all edges with both vertices in $ \mathcal{V} (D_{2},E_{2},\gamma_{2}+\delta) $.
This step is successful if there exists an edge in $ E_{3} $ having an endvertex in
\begin{align*}
  b_{2} + \sigma_{b_{2}} K_{m,n}^{\alpha,1}
  & \coloneq \big\{ x \in b_{2} + \sigma_{b_{2}} T_{n}^{\alpha,1} \vcentcolon \exists y \in
    b_{2} + \sigma_{b_{2}} F_{n}^{\alpha,1} \text{ such that }
    \{ x,y \} \in E, \\
  & \hspace{1.6em}  \{ x,y \} \text{ is $ (p,q) $-open and }
    y \text{ is in a $ q $-open $ m $-seed in }
    b_{2} + \sigma_{b_{2}} F_{n}^{\alpha,1} \big\}.
\end{align*}
Just as before, in case of success, we update the values of the random variables
$ U (e) $, $ e \in \mathbb{E}^{d} $, recording them into the functions
$ \gamma_{3},\zeta_{3} \vcentcolon \mathbb{E}^{d} \to [0,1] $. Note that, by
Lemma~\ref{lem:gm4}, conditioned that Phase 1 and the previous step are successful,
this step is successful with probability greater than $ 1 - \epsilon $.

The above procedure illustrates how we should proceed with the sequential algorithm
in order to find our suitable seed in $ \Pi_{o} $: from the $ \zeta_{k} $-open
cluster of $ b_{k} + B_{m}^{H} $ inside the box $ D_{k} = b_{k} + B_{n + \alpha n} $,
we give a small increase $ \delta > 0 $ on the parameter of the edges in its external
boudary in order to open some of them. In turn, from the endpoints of these newly
open edges, we try to find a $ (p,q) $-open path to a new $ q $-open $ m $-seed
$ b_{k+1} + B_{m}^{H} $, satisfying
\begin{equation}
  \label{eq:renorm:constraints}
  \begin{aligned}
    b_{k+1,1} - b_{k,1} & \in [n, n + \alpha n], \\
    \abs{b_{k+1,i}} & \leq n + \alpha n, \; \forall i = 2,\ldots,s \\
    b_{k+1,i} & = 0, \; \forall i = s+1,\ldots,d.
  \end{aligned}
\end{equation}
Given that the previous steps are successful, this happens with probability at least
$ 1 - \epsilon $, since in each application of Lemma~\ref{lem:gm4}, the already
explored region $ R $ together with its external vertex boundary, $ \Delta_{v} R $,
never intersects $ b_{k} + \sigma_{b_{k}} T_{n}^{\alpha,1} $. In this case, the
updated values of the random variables $ U (e) $, $ e \in \mathbb{E}^{d} $, are
recorded into functions
$ \gamma_{k+1},\zeta_{k+1} \vcentcolon \mathbb{E}^{d} \to [0,1] $ accordingly.

The exploration process stops when we finally find a $ q $-open $ m $-seed
$ \paren*{c_{2} + B_{m}^{H}} \subset \Pi_{o} $, such that
\begin{align*}
  c_{2,1} & \in [9n, 10n + \alpha n], \\
  \abs{c_{2,i}} & \leq n + \alpha n, \; \forall i = 2,\ldots,s \\
  c_{2,i} & = 0, \; \forall i = s+1,\ldots,d,
\end{align*}
and we say that Phase~2 is successful if such seed is reached. Since
\eqref{eq:renorm:constraints} implies that $ b_{k+1,1} \geq b_{k,1} + n $ and our
initial seed is $ o + B_{m}^{H} $, this is possible after the application of at most
nine of the described steps. Therefore, conditioned that Phase~1 is successful, we
have
\begin{align*}
  P ( \text{Phase 2 successful} | \text{Phase 1 successful} )
  & \geq (1 - \epsilon)^{9},
\end{align*}
and every edge $ e \in \mathbb{E}^{d} $ is $ \gamma_{10} (e) $-closed and
$ \zeta_{10} (e) $-open at the end of the procedure. Figure~\ref{fig:step-2}
represents a successful connection between $ B_{m}^{H} $ and $ c_{2} + B_{m}^{H} $.
\begin{figure}[h]
  \centering
  \begin{tikzpicture}[scale=1.0]

    \draw[->,>=latex,dashed] (-1.3,0) -- (10.6,0) node[anchor=north] {$ x_{1} $};
    \draw (0,0) .. controls (1.3,1.9) and (2.2,-0.6) .. (3.4,-0.5);
    
    \fill (-0.05, -0.05) rectangle ++(0.1,0.1);
    \draw[dotted] (-1.3, -1.3) rectangle ++(2.6,2.6);
    \fill[gray,opacity=0.3] (+1.0,+0.0) rectangle ++(0.3,1.3);

    \fill (1.06, 0.64) rectangle ++(0.1,0.1);
    \draw[dotted] (1.11 - 1.3, 0.69 - 1.3) rectangle ++(2.6,2.6);
    \fill[gray,opacity=0.3] (1.11 + 1 , 0.69) rectangle ++(0.3,-1.3);

    \fill (2.2, 0.01) rectangle ++(0.1,0.1);
    \draw[dotted] (2.25 - 1.3, 0.06 - 1.3) rectangle ++(2.6,2.6);
    \fill[gray,opacity=0.3] (2.25 + 1, 0.06) rectangle ++(0.3,-1.3);

    \draw (3.4,-0.5) .. controls (4.5,1.6) and (5.6,0.0) .. (7.8,-0.1);

    \fill (3.35, -0.55) rectangle ++(0.1,0.1);
    \draw[dotted] (3.4 - 1.3, -0.5 - 1.3) rectangle ++(2.6,2.6);
    \fill[gray,opacity=0.3] (3.4 + 1, -0.5) rectangle ++(0.3,1.3);

    \fill (4.55, 0.52) rectangle ++(0.1,0.1);
    \draw[dotted] (4.6 - 1.3, 0.57 - 1.3) rectangle ++(2.6,2.6);
    \fill[gray,opacity=0.3] (4.6 + 1, 0.57) rectangle ++(0.3,-1.3);

    \fill (5.7, 0.3) rectangle ++(0.1,0.1);
    \draw[dotted] (5.75 - 1.3, 0.35 - 1.3) rectangle ++(2.6,2.6);
    \fill[gray,opacity=0.3] (5.75 + 1, 0.35) rectangle ++(0.3,-1.3);

    \fill (6.75, 0.01) rectangle ++(0.1,0.1);
    \draw[dotted] (6.8 - 1.3, 0.06 - 1.3) rectangle ++(2.6,2.6);
    \fill[gray,opacity=0.3] (6.8 + 1, 0.06) rectangle ++(0.3,-1.3);

    \draw (7.85,-0.1) .. controls (8.3,0.5) and (9.1,0.7) .. (10.2,-0.5);
    
    \fill (7.8, -0.15) rectangle ++(0.1,0.1);
    \draw[dotted] (7.85 - 1.3, -0.1 - 1.3) rectangle ++(2.6,2.6);
    \fill[gray,opacity=0.3] (7.85 + 1, -0.1) rectangle ++(0.3,1.3);

    \fill (9.0, 0.3) rectangle ++(0.1,0.1);
    \draw[dotted] (9.05 - 1.3, 0.35 - 1.3) rectangle ++(2.6,2.6);
    \fill[gray,opacity=0.3] (9.05 + 1, 0.35) rectangle ++(0.3,-1.3);

    \fill (10.15, -0.55) rectangle ++(0.1,0.1);
    
    \draw (-1.05,-1.1) node {\footnotesize $ D_{1} $};
    \draw (0,1.8) node {\footnotesize $ D_{2} $};
    \draw (1.7,-1.1) node {\footnotesize $ D_{3} $};
    \draw (3.45,-1.65) node {\footnotesize $ D_{4} $};
    \draw (3.5,1.7) node {\footnotesize $ D_{5} $};
    \draw (6.5,1.8) node {\footnotesize $ D_{6} $};
    \draw (7.4,1.5) node {\footnotesize $ D_{7} $};
    \draw (8.67,-1.25) node {\footnotesize $ D_{8} $};
    \draw (10,1.45) node {\footnotesize $ D_{9} $};

    \draw (-0.05,0.2) node {$ o $};
    \draw (10.2,-0.75) node {$ c_{2} $};
        
  \end{tikzpicture}
  \caption{A successful realization of Phase~1, linking $ B_{m}^{H} $ to
    $ c_{2} + B_{m}^{H} $. Each black square represents the open seed obtained at the
    end of each step. They are linked by paths indicated by the black curves,
    obtained through successive applications of Lemma~\ref{lem:gm4}. Each application
    of the lemma considers a box $ D_{k} = b_{k} + B_{n + \alpha n} $,
    $ k = 1,\ldots,9 $, depicted by the dotted boxes. The gray regions represent the
    sets $ b_{k} + \sigma_{b_{k}} F_{n}^{\alpha,1} $, where seed
    $ b_{k+1} + B_{m}^{H} $ is found at the end of the $ k $-th step. The dashed line
    is the reference by which the steering occurs, relative to the $ x_{1} $-axis.}
  \label{fig:step-2}
\end{figure}
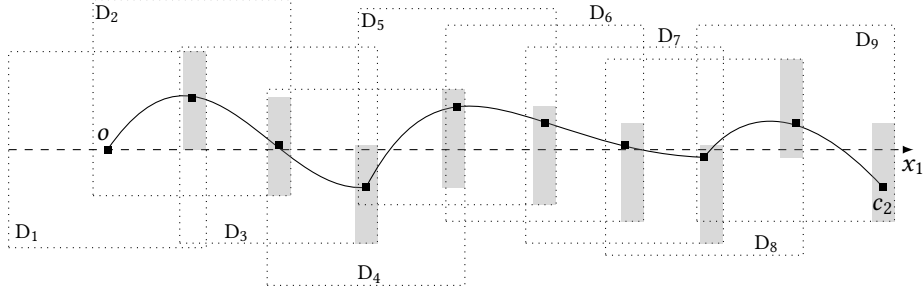

\textbf{Phase 3:}

So far, the sequential algorithm has been applied following the restrictions imposed
by \eqref{eq:renorm:constraints}, which can be interpreted as requiring the cluster
of the origin to ``grow along the $ x_{1} $-axis in the positive direction, keeping
its coordinates bounded in the other directions''. Having reached seed
$ c_{2} + B_{m}^{H} \subset \Pi_{o} $, we continue the exploration process in order
to find a path in $ \Pi_{o} \cup \Lambda_{(1,1)}^{l} \cup \Lambda_{(1,-1)}^{u} $ to
open seeds in the site-blocks $ \Lambda_{(1,1)}^{l} $ and $ \Lambda_{(1,-1)}^{u} $,
which means that a change of direction is necessary. As a condition for applying
Lemma~\ref{lem:gm4}, this needs to be done in such a way that we do not analyze
previously explored edges in the region where we intend to place the next seeds.
Hence, we branch out the cluster of $ c_{2} + B_{m}^{H} $ into an upper and a lower
component by inspecting, in two steps, the edges inside boxes of sizes
$ 2n + \alpha n $ and $ 2n + 2 \alpha n $, both centered in $ c_{2} $.

To put it rigorously, let
$ \mathcal{L} \vcentcolon \mathbb{R}^{d} \to \mathbb{R}^{d} $ be the linear mapping
given by
\begin{align*}
  \mathcal{L} (x_{1},x_{2},x_{3},\ldots,x_{d})
  & = (x_{2},-x_{1},x_{3},\ldots,x_{d}),
\end{align*}
and define the steering function
$ \sigma_{v} \vcentcolon \mathbb{Z}^{d} \to \mathbb{Z}^{d} $,
$ v \in \mathbb{Z}^{d} $, by
\begin{align*}
  \sbra*{\sigma_{v} (x)}_{i}
  & =
    \begin{cases}
      - \sgn (v_{i}) x_{i}, & \text{if } i = 3,\ldots,s, \\
      x_{i}, & \text{if } i = 1,2 \text{ or } i = s+1,\ldots,d.
    \end{cases}
\end{align*}
The application $ \mathcal{L} $ is a rotation of the $ x_{1}x_{2} $-plane by
$ - \pi/2 $ and introduces the change of direction of the exploration process from
being parallel to the $ x_{1} $-axis to being parallel to the $ x_{2} $-axis. As
before, $ \sigma_{v} $ will act to keep the other $ d-2 $ coordinates bounded. Let
\begin{align*}
  D_{10} & = c_{2} + B_{2n + \alpha n}
\end{align*}
and $ E_{11} = E_{10} \cup \widetilde{E}_{11} $, where $ \widetilde{E}_{11} $ is the
set of all edges with both vertices in
$ \mathcal{V} (D_{10},E_{10},\gamma_{10}+\delta) $. This step is successful if there
exists an edge in $ E_{11} $ having an endvertex in
\begin{align*}
  c_{2} + \mathcal{L} \sigma_{c_{2}} K_{m,n}^{\alpha,2}
  & \coloneqq \big\{ x \in c_{2} + \mathcal{L} \sigma_{c_{2}} T_{n}^{\alpha,2} \vcentcolon
    \exists y \in c_{2} + \mathcal{L} \sigma_{c_{2}} F_{n}^{\alpha,2} \text{ such that }
    \{ x,y \} \in E, \\
  & \hspace{1.6em}  \{ x,y \}\text{ is $ (p,q) $-open and }
    y \text{ is in a $ q $-open $ m $-seed in }
    c_{2} + \mathcal{L} \sigma_{c_{2}} F_{n}^{\alpha,2} \big\}.
\end{align*}
After succeeding, we record the updated values of the random variables $ U (e) $ into
the functions $ \gamma_{11},\zeta_{11} \vcentcolon \mathbb{E}^{d} \to [0,1] $ and
repeat the same step using a slightly bigger box than $ D_{10} $,
\begin{align*}
  D_{11} & = c_{2} + B_{2n + 2 \alpha n + \alpha^{2} n},
\end{align*}
this time to find an edge in $ E_{12} $ having an endvertex in
$ c_{2} - \mathcal{L} \sigma_{c_{2}} K_{m,n}^{\alpha,2+\alpha + \alpha^{2}} $. The
size $ D_{11} $ is bigger to ensure that the edges incident to
$ c_{2} - \mathcal{L} \sigma_{c_{2}} T_{n}^{\alpha,2+\alpha + \alpha^{2}} $ have not
been explored before. If we succeed, we call the ``lower'' and the ``upper'' seeds
$ c_{3}^{l} + B_{m}^{H} $ and $ c_{3}^{u} + B_{m}^{H} $, respectively. Thus,
\begin{align*}
  P (\text{Phase 3 successful} | \text{Phases 1 and 2 successful})
  & \geq (1 - \epsilon)^{2},
\end{align*}
and every edge $ e \in \mathbb{E}^{d} $ is $ \gamma_{12} (e) $-closed and
$ \zeta_{12} (e) $-open in this case. Figure~\ref{fig:step-3} illustrates a
successful connection at Phase~3.

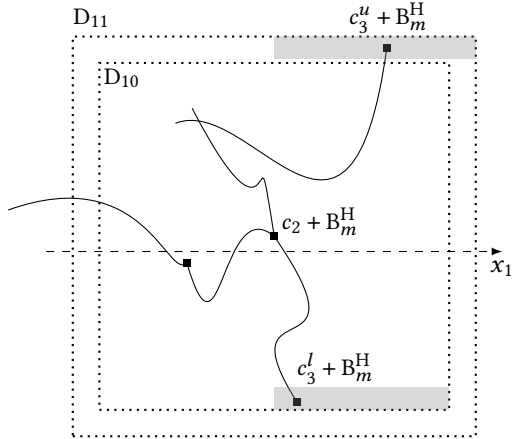
\begin{figure}[h]
  \centering
  \begin{tikzpicture}[scale=1.0]

    \draw[->,>=latex,dashed] (-3,-0.2) -- (3,-0.2) node[anchor=north] {$ x_{1} $};

    \draw (-3.5,0.35) .. controls (-1.7,1.0) and (-1.4,-0.6) .. (-1.15,-0.35);
    \draw (-1.15,-0.35) .. controls (-0.7,-1.9) and (-0.7,0.6) .. (0,0);
    \draw (0,0) .. controls (1.3,-1.9) and (-0.7,-0.6) .. (0.3,-2.2);
    \draw (0,0) .. controls (-0.3,1.9) and (0.1,-0.6) .. (-1.08,1.7);
    \draw (-1.3,1.5) .. controls (-.3,1.9) and (1.0,-1.0) .. (1.48,2.5);
    
    \fill (-0.05, -0.05) rectangle ++(0.1,0.1);
    \fill (1.43, 2.45) rectangle ++(0.1,0.1);
    \fill (0.25,-2.25) rectangle ++(0.1,0.1);

    \draw[dotted,thick] (-2.3, -2.3) rectangle ++(4.6,4.6);
    \fill[gray,opacity=0.3] (+0.0,-2.0) rectangle ++(2.3,-0.3);

    \draw[dotted,thick] (-2.65,-2.65) rectangle ++(5.3,5.3);
    \fill[gray,opacity=0.3] (+0.0,+2.35) rectangle ++(2.65,+0.3);

    \fill (-1.2, -0.4) rectangle ++(0.1,0.1);
    
    \draw (0.6,0.2) node {\small $ c_{2} + B_{m}^{H} $};
    \draw (0.8,-1.75) node {\small $ c_{3}^{l} + B_{m}^{H} $};
    \draw (1.5,2.9) node {\small $ c_{3}^{u} + B_{m}^{H} $};
    \draw (-2,2.1) node {\small $ D_{10} $};
    \draw (-2.4,2.9) node {\small $ D_{11} $};
        
  \end{tikzpicture}
  \caption{A successful connection at Phase~3, projected onto
    $ \mathbb{Z}^{2} \times \{ 0 \}^{d-2} $. The connections between seeds occur in
    the same way as described in Figure~\ref{fig:step-2}.}
  \label{fig:step-3}
\end{figure}

One can notice that Lemma~\ref{lem:gm4} is not applicable if, instead of using the
box $ c_{2} + B_{2n + \alpha n} $, we had considered
$ D_{10} = c_{2} + B_{n + \alpha n} $. In this situation, as shown in
Figure~\ref{fig:caveat-step-3}, we
have~$ D_{9} \cap \paren[\big]{c_{2} + \mathcal{L} \sigma_{c_{2}} T_{n}^{\alpha,1}}
\neq \emptyset $, which implies that the vertices of this region may have been
revealed in the previous step. Therefore, the requiremets for the subset $ R $ in the
statement of Lemma~\ref{lem:gm4} are not satisfied under this setting. This fact also
explains why the renormalization scheme of Grimmett and Marstrand~\cite{gm90} cannot
be adapted in a straightforward manner, using only one size of box, as mentioned in
Remark~\ref{rem:multiple-box-sizes}.

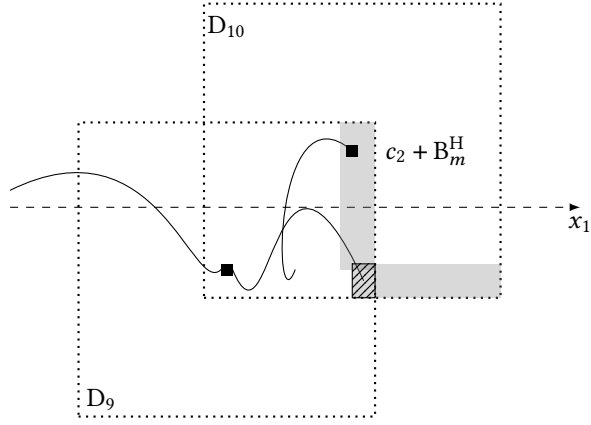
\begin{figure}[h]
  \centering
  \begin{tikzpicture}[scale=1.5]

    \draw[->,>=latex,dashed] (-3,-0.5) -- (2,-0.5) node[anchor=north] {$ x_{1} $};

    \draw (-3,-0.35) .. controls (-1.7,0.3) and (-1.4,-1.3) .. (-1.15,-1.05);
    \draw (-1.05,-1.05) .. controls (-0.7,-1.9) and (-0.7,0.6) .. (0.1,-1.15);
    \draw (-0.5,-1.05) .. controls (-0.7,-1.6) and (-0.7,0.6) .. (0.0,0.0);

    \fill (-1.1-0.05, -1.05-0.05) rectangle ++(0.1,0.1);
    \draw[dotted,thick] (-1.1-1.3, -1.05-1.3) rectangle ++(2.6,2.6);
    \fill[gray,opacity=0.3] (-1.1+1.0,-1.05) rectangle ++(0.3,1.3);

    \fill (-0.05, -0.05) rectangle ++(0.1,0.1);

    \draw[dotted,thick] (-1.3, -1.3) rectangle ++(2.6,2.6);

    \fill[gray,opacity=0.3] (+0.0,-1.0) rectangle ++(1.3,-0.3);
    \draw[pattern=north east lines] (0.0,-1.0) rectangle ++(0.2,-0.3);

    \draw (0.65,0.0) node {$ c_{2} + B_{m}^{H} $};
    \draw (-2.2,-2.2) node {$ D_{9} $};
    \draw (-1.1,1.1) node {$ D_{10} $};
        
  \end{tikzpicture}
  \caption{An illustration of the issue that appears if we consider
    $ D_{10} = c_{2} + B_{n + \alpha n} $. Once seed $ c_{2} + B_{m}^{H} $ is reached
    from the open paths obtained at previous steps (indicated by the black curves),
    we should make a change of direction, as explained in Phase~3. However, if we
    attempt to make such change using $ D_{10} $ as a translate of
    $ B_{n + \alpha n} $, then a portion of the region where seed
    $ c_{3}^{l} + B_{m}^{H} $ (or $ c_{3}^{r} + B_{m}^{H} $, depending on the
    position of~$ D_{9} $) is supposed to be found may have already been explored. As
    the hatched region indicates, this might be the case when we applied
    Lemma~\ref{lem:gm4} using the box $ D_{9} $.}
  \label{fig:caveat-step-3}
\end{figure}

\textbf{Phase 4:}

From now on, all the subsequent phases will consist in explorations analogous to the
ones in Phases 2 and 3, hence we will only give a brief explanation on how the
cluster grows and mention the number of steps necessary for the accomplishment of
each phase.

At Phase 4, we attempt to link $ c_{3}^{l} + B_{m}^{H} $ to a $ q $-open $ m $-seed
$ (c_{4} + B_{m}^{H}) \subset \Pi_{o} $, such that
\begin{align*}
  c_{4,1} & \in [9n, 15 n], \\
  c_{4,2} & \in [-9n, -10n - \alpha n] \\
  \abs{c_{4,i}} & \leq 3n, \; \forall i = 3,\ldots,s \\
  c_{4,i} & = 0, \; \forall i = s+1,\ldots,d.
\end{align*}
This phase is analogous to Phase~2, with the difference that, in the present case, we
grow the cluster along the $ x_{2} $-axis in the negative direction and use the plane
$ x_{1} = 12 n $ as the reference for steering the first coordinate. The steering
reference for the other $ s-2 $ coordinates do not change. Since
$ c_{3,2}^{l} \leq n $ and $ c_{4,2} \in [-9n, -10n - \alpha n] $, it takes at most
12 applications of Lemma~\ref{lem:gm4} to reach a seed as mentioned above. Therefore,
Phase~4 is successful with probability at least $ (1 - \epsilon)^{12} $, conditioned
that we succeed at the previous phases.

\textbf{Phase 5:}

Here we prepare another change of direction in the explored open cluster, analogous
to the step used in Phase~3. We attempt to link $ c_{4} + B_{m}^{H} $ to a $ q $-open
$ m $-seed $ (c_{5} + B_{m}^{H}) \subset \sigma_{c_{4}}F_{n}^{\alpha,2} $, where
$ \sigma_{v}\vcentcolon \mathbb{Z}^{d} \to \mathbb{Z}^{d} $, $ v \in \mathbb{Z}^{d} $
is the steering function
\begin{align*}
  \sbra*{\sigma_{v} (x)}_{i}
  & =
    \begin{cases}
      - x_{2}, & \text{if } i = 2, \\
      - \sgn (v_{i}) x_{i}, & \text{if } i = 3,\ldots,s, \\
      x_{i}, & \text{if } i = 1 \text{ or } i = s+1,\ldots,d.
    \end{cases}
\end{align*}
This phase is successful with probability at least $ 1 - \epsilon $, conditioned that
the previous phases are successful as well.

\textbf{Phase 6:}

Here we complete the exploration of the lower branch of the cluster of the origin. We
attempt to link $ c_{5} + B_{m}^{H} $ to a seed
$ (\mathscr{s}_{o} + B_{m}^{H}) \subset \Lambda_{(1,-1)}^{u} $, with
$ \mathscr{s}_{o} = (\mathscr{s}_{o,1},\ldots,\mathscr{s}_{o,d}) \in \mathbb{Z}^{d} $
satisfying
\begin{align*}
  \mathscr{s}_{o,1} & \in [24 n, 25 n + \alpha n], \\
  \mathscr{s}_{o,2} & \in [-9n, -15n] \\
  \abs{\mathscr{s}_{o,i}} & \leq 3n, \; \forall i = 3,\ldots,s \\
  \mathscr{s}_{o,i} & = 0, \; \forall i = s+1,\ldots,d.
\end{align*}
We perform a process similar to that of Phases~2 and~4, growing the cluster of
$ c_{5} + B_{m}^{H} $ along the $ x_{1} $-axis in the positive direction, using the
plane $ x_{2} = -12n $ as the reference for steering the second coordinate and
keeping the steering rule for the remaining coordinates the same as before. As usual,
we use a translate of $ B_{n + \alpha n} $ in each application of
Lemma~\ref{lem:gm4}. If such seed is reached, we declare Phase~6 successful. Since
$ c_{5,1} \geq 11n $ and $ \mathscr{s}_{o,1} \in [24 n, 25 n + \alpha n] $, this is
achieved within at most 13 applications of Lemma~\ref{lem:gm4}, hence the probability
of success is at least $ (1-\epsilon)^{13} $.

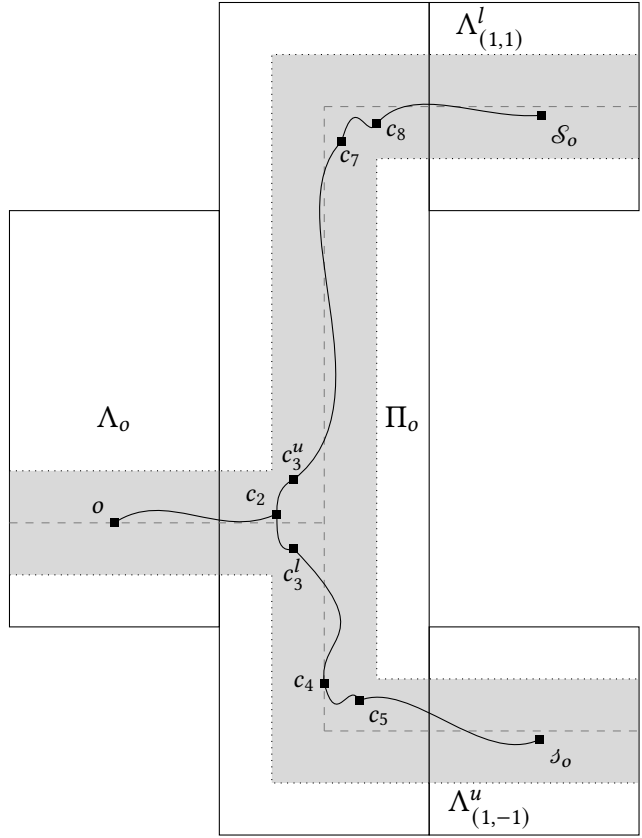
\begin{figure}[h]
  \centering
  \begin{tikzpicture}[scale=0.23]

  
    \draw (-6,-6) -- (-6,18) -- (6,18) -- (6,-6) -- cycle;
    \draw (6,-18) -- (6,30) -- (18,30) -- (18,-18) -- cycle;
    \draw (18,18) -- (18,30) -- (30,30) -- (30,18) -- cycle;
    \draw (18,-18) -- (18,-6) -- (30,-6) -- (30,-18) -- cycle;

    \draw (00.00,06.00) node {\large $ \Lambda_{o} $};

    \draw (16.50,06.00) node {\large $ \Pi_{o} $};

    \draw (21.50,28.50) node {\large $ \Lambda_{(1,1)}^{l} $};

    \draw (21.50,-16.50) node {\large $ \Lambda_{(1,-1)}^{u} $};
    
    \fill[gray,opacity=0.3] (-6,3) -- (9,3) -- (9,27) -- (30,27) -- (30,21) -- (15,21) --
    (15,-9) -- (30,-9) -- (30,-15) -- (9,-15) -- (9,-3) -- (-6,-3);
    
    \draw[dotted] (-6,-3) -- (9,-3) -- (9,-15) -- (30,-15);
    \draw[dotted] (-6,3) -- (9,3) -- (9,27) -- (30,27);
    \draw[dotted] (30,21) -- (15,21) -- (15,-9) -- (30,-9);
    
    \draw[dashed,gray] (-6,0) -- (12,0);
    \draw[dashed,gray] (12,-12) -- (12,24);
    \draw[dashed,gray] (12,24) -- (30,24);
    \draw[dashed,gray] (12,-12) -- (30,-12);
    
    \fill (-0.25,-0.25) rectangle ++(0.5,0.5);
    
    \draw (0,0) .. controls (3,2) and (6,-1) .. (9.25,0.5);
    \fill (09.25 - 0.25, 0.5 - 0.25) rectangle ++(0.5,0.5);
    
    \draw (9.25,0.5) .. controls (9.40,0.5) and (9.0,-1.90) .. (10.25,-1.5)
                     .. controls (18.,-4,7) and (11.6,-6.4) .. (12,-9.25)
                     .. controls (12.7,-12) and (13.4,-9.0) .. (14,-10.25)
                     .. controls (17,-9.02) and (21,-14.01) .. (24.3,-12.5);
                     
    \fill (10.25 - 0.25, -01.50 - 0.25) rectangle ++(0.5,0.5);
    \fill (12.00 - 0.25, -09.25 - 0.25) rectangle ++(0.5,0.5);
    \fill (14.00 - 0.25, -10.25 - 0.25) rectangle ++(0.5,0.5);
    \fill (24.30 - 0.25, -12.50 - 0.25) rectangle ++(0.5,0.5);

    \draw (9.25,0.5) .. controls (9.4,0.5) and (9.0,1.9) .. (10.25,2.5)
                     .. controls (16.01,7) and (9.10,18) .. (13,22) 
                     .. controls (13.8,25) and (14.4,22) .. (15,23)
                     .. controls (17,25.4) and (21,23.2) .. (24.4,23.5);

    \fill (10.25 - 0.25, 2.50 - 0.25) rectangle ++(0.5,0.5);
    \fill (13.00 - 0.25, 22.0 - 0.25) rectangle ++(0.5,0.5);
    \fill (15.00 - 0.25, 23.0 - 0.25) rectangle ++(0.5,0.5);
    \fill (24.40 - 0.25, 23.5 - 0.25) rectangle ++(0.5,0.5);
    
    \draw (00.00,00.00) node[anchor=south east] {$ o $};
    \draw (09.25,00.50) node[anchor=south east] {$ c_{2} $};
    \draw (10.25,-1.50) node[anchor=north] {$ c_{3}^{l} $};
    \draw (12.00,-9.25) node[anchor=east] {$ c_{4} $};
    \draw (14.00,-10.25) node[anchor=north west] {$ c_{5} $};
    \draw (24.30,-12.50) node[anchor=north west] {$ \mathscr{s}_{o} $};
    \draw (10.25,02.50) node[anchor=south] {$ c_{3}^{u} $};
    \draw (13.50, 22.0) node[anchor=north] {$ c_{7} $};
    \draw (15.00, 23.5) node[anchor=north west] {$ c_{8} $};
    \draw (24.40, 23.5) node[anchor=north west] {$ \mathscr{S}_{o} $};
  \end{tikzpicture}
  \caption{A configuration in the event $ \{ Z(o) = 1 \} $, projected onto
    $ \mathbb{Z}^{2} \times \{ 0 \}^{d-2} $. Each tiny black square represents the
    open seed obtained at the end of each phase. They are linked by paths represented
    by the black curves, obtained through successive applications of
    Lemma~\ref{lem:gm4}. The dashed lines represent the reference by which the
    steering occurs, relative to the $ x_{1}x_{2} $-plane. As a consequence of
    adopting this reference and the parameters $ (\alpha_{i},\beta_{i}) $,
    $ i=1,2,3 $, every open seed found in the exploration process lies inside the
    gray region, within a distance of $ 3n $ from the dashed lines.}
  \label{fig:origin-occupied}
\end{figure}

\textbf{Phases 7, 8 and 9:}

These are essentially reproductions of Phases 4, 5 and 6, respectively. This time, we
apply the sequential algorithm to the ``upper'' branch of the cluster of the origin,
attempting to link $ ( c_{3}^{u} + B_{m}^{H} ) \subset \Pi_{o} $ to an open seed
$ ( \mathscr{S}_{o} + B_{m}^{H} ) \subset \Lambda_{(1,1)}^{l} $. The only relevant
difference occurs at Phase~7, where Lemma~\ref{lem:gm4} must be applied at most 24
times, instead of 12 times as in Phase~4. This is so because the box
$ 2N \vec{u}_{1} + \Lambda_{o}^{u} \subset \Pi_{o} $ necessarily needs to be entirely
crossed during the exploration process along the $ x_{2} $-axis in the positive
direction.

If we succeed at all these phases, we declare $ Z (o) = 1 $. A configuration of this
kind is illustrated in Figure~\ref{fig:origin-occupied}. During this process, we have
used Lemma~\ref{lem:gm4} at most 75 times, therefore~\eqref{eq:renorm:1} implies that
\begin{align}
  \label{eq:prob-renorm-open}
  P \paren*{Z (o) = 1 \middle| B_{m}^{H} \text{ is a seed}}
  & \geq (1 - \epsilon)^{75} \geq 1 - 75 \epsilon
    \geq \frac{1}{2} \paren[\big]{1 + \vec{p}_{c}^{\,\text{site}} (G)}.
\end{align}
We should also have updated the functions $ \gamma_{k} $ and $ \zeta_{k} $ to the
same extent. Thus, if $ k_{\mathrm{max}} \in \mathbb{N} $ is the maximum number of
steps used in the determination of $ Z (o) $, it follows that
$ k_{\mathrm{max}} \leq 75 $. Moreover, we claim that
\begin{align}
  \label{eq:p-q-eta-open-edges}
  \gamma_{k_{\mathrm{max}}} (e) \leq \zeta_{k_{\mathrm{max}}} (e)
  & \leq q \mathbf{1}_{\mathsf{E}_{H}}(e)
    + p \mathbf{1}_{\mathsf{E}_{H}^{c}} (e) + 8 \delta
    \quad \forall e \in E_{k_{\mathrm{max}}},
\end{align}
which implies that every edge of $ E_{k_{\mathrm{max}}} $ is
$ (p + \eta/2, q + \eta/2)$-open, since $ 8 \delta \leq \eta/2 $
by~\eqref{eq:renorm:1}.

As a matter of fact, note that the general rule for updating the edges of
$ \mathbb{Z}^{d} $ is
\begin{align*}
  \gamma_{k+1} (e)
  & =
    \begin{cases}
      \gamma_{k} (e),
      & \text{if } e \notin \mathsf{E}_{D_{k}}, \\
      \gamma_{k} (e) + \delta,
      & \text{if } e \in \Delta E_{k} \setminus E_{k+1}, \\
      q,
      & \text{if } e \in ( \Delta E_{k+1} \setminus \Delta E_{k} )
      \cap \mathsf{E}_{D_{k}} \cap \mathsf{E}_{H}, \\
      p,
      & \text{if } e \in ( \Delta E_{k+1} \setminus \Delta E_{k} )
      \cap \mathsf{E}_{D_{k}} \cap \mathsf{E}_{H}^{c}, \\
      0,
      & \text{otherwise,}
    \end{cases} \\
  \zeta_{k+1} (e)
  & =
    \begin{cases}
      \zeta_{k} (e),
      & \text{if } e \in E_{k}, \\
      \gamma_{k} (e) + \delta,
      & \text{if } e \in \Delta E_{k} \cap E_{k+1}, \\
      q,
      & \text{if } e \in E_{k+1} \setminus ( E_{k} \cup \Delta E_{k} )
      \cap \mathsf{E}_{D_{k}} \cap \mathsf{E}_{H}, \\
      p,
      & \text{if } e \in E_{k+1} \setminus ( E_{k} \cup \Delta E_{k} )
      \cap \mathsf{E}_{D_{k}} \cap \mathsf{E}_{H}^{c}, \\
      1,
      & \text{otherwise.}
    \end{cases}
\end{align*}
This means that any edge $ e \in \mathbb{Z}^{d} $ such that
$ \zeta_{k+1} (e) = \gamma_{k} (e) + \delta $ or
$ \gamma_{k+1} (e) = \gamma_{k} (e) + \delta $ belong to $ \Delta E_{k} $. By
definition of the exploration process, this inspected edge must be contained in the
box $ D_{k} $. Since a box $ D_{k} $, $ k=1,\ldots,k_{\max} $, intersects at most 8
other boxes (this is the case of boxes $ D_{10} $ and $ D_{11} $ used at Phase~3),
such an edge is inspected at most 8 times. Therefore,
$ \zeta_{k_{\mathrm{max}}} (e) \leq q \mathbf{1}_{\mathsf{E}_{H}}(e) + p
\mathbf{1}_{\mathsf{E}_{H}^{c}} (e) + 8 \delta $ for every~$ e \in E_{k_{\max}} $.

If $ Z(o) = 1 $, we continue to apply the exploration process described, in order to
determine the states of the random variables $ Z(1,-1) $ and $ Z(1,1) $. For each
random variable, the process goes the same way as for $ Z(o) $: we start with
$ (\mathscr{s}_{o} + B_{m}^{H}) \subset \Lambda_{(1,-1)} $ and
$ (\mathscr{S}_{o} + B_{m}^{H}) \subset \Lambda_{(1,1)} $ as the initial $ q $-open
$ m $-seeds, respectively, and apply Lemma~\ref{lem:gm4} at most 75 times,
reproducing Phases 2-9 in the relevant site and passage blocks. This involves
augmenting the set of explored edges $ E_{k_{\max}} $ by successive applications of
Lemma~\ref{lem:gm4}. By the observations made in the previous paragraph, it follows
that every edge in the augmented set is $ (p + \eta/2, q + \eta/2) $-open.

In general, for $ x \in V \subset \mathbb{Z}^{2} $, we say that $ Z (x) = 1 $ if
Phases~2-9 can be successfully performed in the region
$ \Lambda_{x} \cup \Pi_{x} \cup \Lambda_{x+(1,1)}^{l} \cup \Lambda_{x+(1,-1)}^{u} $,
using $ (\mathscr{s}_{x-(1,-1)} + B_{m}^{H}) \subset \Lambda_{x}^{l} $ as the initial
$ q $-open $ m $-seed, if it exists, or
$ (\mathscr{S}_{x-(1,1)} + B_{m}^{H}) \subset \Lambda_{x}^{u} $, if such seed exists
and the former do not. Otherwise, we say that $ Z (x) = 0 $.

The definition of $ Z (x) $ together with the choice of $ N = 6n $ imply that, for
any $ l \in \mathbb{N} $, given that the variables $ Z((x_{1},x_{2})) $ with
$ x_{1} < l $ have been determined, the states of the variables $ Z((x_{1},x_{2})) $
with $ x_{1} = l $ are independent of each other, since the set of edges used in the
exploration of the corresponding boxes are all disjoint. We use this fact to conclude
the proof of Theorem~\ref{thm:pq-approx-slabs} in the following manner: for
$ x,y \in V $, we say that $ x \leq y $ if $ x_{1} \leq y_{1} $ or $ x_{1} = y_{1} $
and $ x_{2} \leq y_{2} $. This naturally defines an ordering of the sites of $ V $.
If we consider the cluster-growth of $ o \in V $ with respect to
$ ( Z(x) )_{x \in V} $ according to this ordering, it follows that, at each stage,
conditioned on the past exploration, the chance of augmenting the open cluster by one
vertex is at least~$ \paren*{1 + \vec{p}_{c}^{\,\text{site}} (G)}/2 $
by~\eqref{eq:prob-renorm-open}, so that~\eqref{eq:lem:gm5:hyp} is satisfied. By
Lemma~\ref{lem:gm5}, it follows that there is a positive probability of the cluster
of the origin on $ G =(V,E) $ induced by $ ( Z(x) )_{x \in V} $ to be infinite. On
this event, there exists an infinite $ (p + \eta/2,q + \eta/2) $-open path of
$ \mathbb{Z}^{d} $ within the slab~$ \mathbb{Z}^{2} \times \{ -N,\ldots,N \}^{d-2} $.
\qed

Figure~\ref{fig:cluster-growth} shows a cluster-growth process with all possible
types of open and closed site-blocks.

\begin{figure}[h]
  \centering
  \begin{tikzpicture}[scale=0.59]

    \draw[step=1cm,gray,very thin,dashed] (0,-7.5) grid (11.5,7.5);

    \draw[thick] (+0,+1) -- (+1,+1) -- (+1,-1) -- (+0,-1) -- cycle;
    \fill[gray,opacity=0.3] (+0,+1) -- (+1,+1) -- (+1,-1) -- (+0,-1) -- cycle;
    
    \draw[thick] (+2,+3) -- (+3,+3) -- (+3,+1) -- (+2,+1) -- cycle;
    \fill[gray,opacity=0.3] (+2,+3) -- (+3,+3) -- (+3,+1) -- (+2,+1) -- cycle;
    \draw[thick] (+2,-1) -- (+3,-1) -- (+3,-3) -- (+2,-3) -- cycle;
    \fill[gray,opacity=0.3] (+2,-1) -- (+3,-1) -- (+3,-3) -- (+2,-3) -- cycle;

    \draw[thick] (+4,+5) -- (+5,+5) -- (+5,+3) -- (+4,+3) -- cycle;
    \draw[thick] (+4,+1) -- (+5,+1) -- (+5,-1) -- (+4,-1) -- cycle;
    \fill[gray,opacity=0.3] (+4,+1) -- (+5,+1) -- (+5,-1) -- (+4,-1) -- cycle;
    \draw[thick] (+4,-3) -- (+5,-3) -- (+5,-5) -- (+4,-5) -- cycle;
    \fill[gray,opacity=0.3] (+4,-3) -- (+5,-3) -- (+5,-5) -- (+4,-5) -- cycle;

    \draw[thick] (+6,+7) -- (+7,+7) -- (+7,+5) -- (+6,+5) -- cycle;
    \draw[thick] (+6,+3) -- (+7,+3) -- (+7,+1) -- (+6,+1) -- cycle;
    \fill[gray,opacity=0.3] (+6,+3) -- (+7,+3) -- (+7,+1) -- (+6,+1) -- cycle;
    \draw[thick] (+6,-1) -- (+7,-1) -- (+7,-3) -- (+6,-3) -- cycle;
    \draw[thick] (+6,-5) -- (+7,-5) -- (+7,-7) -- (+6,-7) -- cycle;

    \draw[thick] (+8,+5) -- (+9,+5) -- (+9,+3) -- (+8,+3) -- cycle;
    \fill[gray,opacity=0.3] (+8,+5) -- (+9,+5) -- (+9,+3) -- (+8,+3) -- cycle;
    \draw[thick] (+8,+1) -- (+9,+1) -- (+9,-1) -- (+8,-1) -- cycle;
    \fill[gray,opacity=0.3] (+8,+1) -- (+9,+1) -- (+9,-1) -- (+8,-1) -- cycle;
    \draw[thick] (+8,-3) -- (+9,-3) -- (+9,-5) -- (+8,-5) -- cycle;

    \draw[thick] (+10,+7) -- (+11,+7) -- (+11,+5) -- (+10,+5) -- cycle;
    \fill[gray,opacity=0.3] (+10,+7) -- (+11,+7) -- (+11,+5) -- (+10,+5) -- cycle;
    \draw[thick] (+10,+3) -- (+11,+3) -- (+11,+1) -- (+10,+1) -- cycle;
    \fill[gray,opacity=0.3] (+10,+3) -- (+11,+3) -- (+11,+1) -- (+10,+1) -- cycle;
    \draw[thick] (+10,-1) -- (+11,-1) -- (+11,-3) -- (+10,-3) -- cycle;
    \fill[gray,opacity=0.3] (+10,-1) -- (+11,-1) -- (+11,-3) -- (+10,-3) -- cycle;
    \draw[thick] (+10,-5) -- (+11,-5) -- (+11,-7) -- (+10,-7) -- cycle;

    \fill (0.4, -0.6) rectangle ++(0.2,0.2);
    
    \fill (2.45, +1.55) rectangle ++(0.2,0.2);
    \fill (2.42, -1.5) rectangle ++(0.2,0.2);
    
    \fill (4.4, +3.3) rectangle ++(0.2,0.2);
    \fill (4.4, +0.3) rectangle ++(0.2,0.2);
    \fill (4.5, -0.5) rectangle ++(0.2,0.2);
    \fill (4.45, -3.8) rectangle ++(0.2,0.2);

    \fill (6.45, +1.35) rectangle ++(0.2,0.2);
    \fill (6.35, -1.65) rectangle ++(0.2,0.2);
    \fill (6.4, -2.5) rectangle ++(0.2,0.2);
    \fill (6.4, -5.5) rectangle ++(0.2,0.2);

    \fill (8.45, +3.55) rectangle ++(0.2,0.2);
    \fill (8.5, +0.45) rectangle ++(0.2,0.2);

    \fill (10.5, +5.45) rectangle ++(0.2,0.2);
    \fill (10.45, +2.45) rectangle ++(0.2,0.2);
    \fill (10.55, +1.55) rectangle ++(0.2,0.2);
    \fill (10.4, -1.55) rectangle ++(0.2,0.2);

    \draw (0.4,-0.6) .. controls (0.7,-0.1) and (1.0,0.1) .. (1.6,-0.6);
    \draw (1.6,-1.8) .. controls (1,-0.1) and (1.3,1.6) .. (1.6,1.8);
    \draw (1.2,1.6) .. controls (1.7,1.0) and (2.1,1.1) .. (2.6,1.6);
    \draw (1.2,-1.6) .. controls (1.7,-1.0) and (2.1,-2.1) .. (2.5,-1.4);

    \draw (2.5,1.6) .. controls (2.7,0.8) and (3.3,2.1) .. (3.6,1.6);
    \draw (3.6,0.3) .. controls (4.0,1.0) and (2.7,2.0) .. (3.6,3.8);
    \draw (3.2,3.6) .. controls (3.7,3.0) and (4.1,4.1) .. (4.6,3.4);
    \draw (3.4,0.6) .. controls (3.7,1.0) and (4.1,0.8) .. (4.5,0.4);

    \draw (2.5,-1.6) .. controls (2.7,-0.8) and (3.1,-2.1) .. (3.6,-1.6);
    \draw (3.6,-0.5) .. controls (3.0,-1.0) and (3.7,-2.0) .. (3.6,-3.8);
    \draw (3.2,-0.8) .. controls (3.7,-0.5) and (4.1,-0.8) .. (4.5,-0.4);
    \draw (3.2,-3.6) .. controls (3.7,-4.0) and (4.1,-3.0) .. (4.6,-3.7);

    \draw (4.2,-0.7) .. controls (4.7,0.0) and (5.0,-0.7) .. (5.6,-0.1);
    \draw (5.6,-1.8) .. controls (4.4,-0.1) and (6.3,1.0) .. (5.6,1.8);
    \draw (5.2,1.6) .. controls (5.7,2.0) and (6.1,1.1) .. (6.6,1.6);
    \draw (5.2,-1.6) .. controls (5.7,-0.7) and (6.1,-1.6) .. (6.5,-1.6);

    \draw (4.2,-3.8) .. controls (4.7,-2.8) and (5.0,-3.9) .. (5.6,-3.5);
    \draw (5.6,-5.8) .. controls (4.6,-3.1) and (5.8,-3.0) .. (5.6,-2.2);
    \draw (5.2,-2.4) .. controls (5.7,-2.0) and (6.1,-2.9) .. (6.6,-2.4);
    \draw (5.2,-5.6) .. controls (5.7,-4.7) and (6.1,-5.6) .. (6.5,-5.3);

    \draw (6.1,1.4) .. controls (6.7,2.1) and (7.3,1.1) .. (7.8,1.3);
    \draw (7.6,0.3) .. controls (8.0,2.5) and (6.7,2.4) .. (7.6,3.8);
    \draw (7.2,3.6) .. controls (7.7,3.8) and (8.1,3.1) .. (8.6,3.6);
    \draw (7.4,0.6) .. controls (7.7,0.1) and (8.1,0.8) .. (8.7,0.5);

    \draw (8.1,3.4) .. controls (8.7,4.1) and (9.3,3.4) .. (9.8,3.3);
    \draw (9.6,2.3) .. controls (9.0,4.5) and (10.0,4.4) .. (9.6,5.8);
    \draw (9.2,5.6) .. controls (9.7,5.1) and (10.1,5.8) .. (10.6,5.6);
    \draw (9.4,2.6) .. controls (9.7,2.5) and (10.1,2.9) .. (10.7,2.5);

    \draw (8.2,0.7) .. controls (8.7,0.1) and (9.0,0.5) .. (9.6,0.3);
    \draw (9.6,-1.8) .. controls (10.0,-0.1) and (8.7,1.0) .. (9.6,1.8);
    \draw (9.2,1.4) .. controls (9.7,1.8) and (10.1,1.4) .. (10.6,1.6);
    \draw (9.2,-1.6) .. controls (9.7,-0.9) and (10.1,-1.6) .. (10.7,-1.5);

    \draw[->] (10.6,5.55) -- (11.65,5.55);
    \draw[->] (10.6,1.65) -- (11.65,1.65);
    \draw[->] (10.6,-1.45) -- (11.65,-1.45);

  \end{tikzpicture}
  \caption{A cluster-growth process of $ o \in V $ with respect to
    $ ( Z(x) )_{x \in V} $. The gray site-blocks indicate $ Z(x) = 1 $ and the white
    ones indicate $ Z(x) = 0 $. Successful paths between adjacent site-blocks are
    indicated by the black curves and unsuccessful paths are omitted. Every possible
    combination between the placement of seeds and the value of $ Z(x) $ is
    represented above.}
  \label{fig:cluster-growth}
\end{figure}
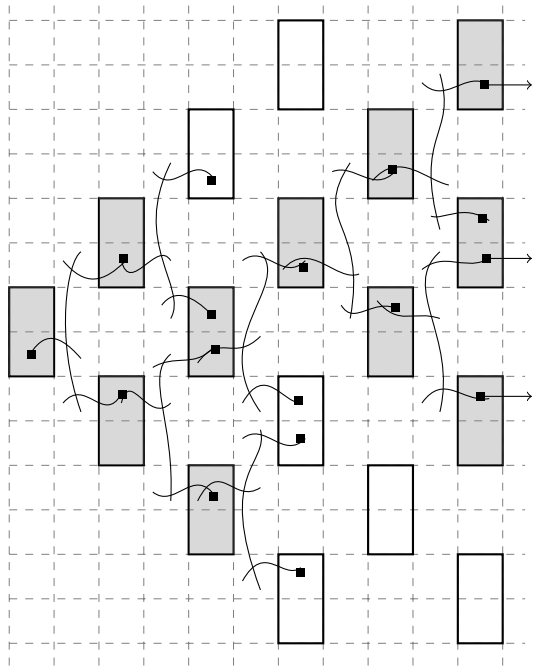

\section*{Acknowledgements}
The research of B.N.B.L.\ was supported in part by CNPq grant 305811/2018-5, FAPEMIG
(Programa Pesquisador Mineiro) and FAPERJ (Pronex E-26/010.001269/2016). The research of
H.C.S\ was supported in part by CAPES -- Finance Code 001, and by CNPq grant
140548/2013-0.

H.C.S.\ would like to thank CAPES for the financial support from the PrInt
scholarship and the University of Groningen for the hospitality during his internship
at Bernoulli Institute.

\end{document}